\newtheorem{Theorem}{Theorem}[section]
\newtheorem{Question}[Theorem]{Question}
\newtheorem{Corollary}[Theorem]{Corollary}
\newtheorem{Proposition}[Theorem]{Proposition}
\theoremstyle{definition}
\newtheorem{Definition}[Theorem]{Definition}
\newtheorem{Example}[Theorem]{Example}
\newtheorem{Remark}[Theorem]{Remark} }
\begin{document}

\newcommand{\arXivNumber}{2001.04087}

\renewcommand{\thefootnote}{}

\renewcommand{\PaperNumber}{013}

\FirstPageHeading

\ShortArticleName{Curvature-Dimension Condition Meets Gromov's $n$-Volumic Scalar Curvature}

\ArticleName{Curvature-Dimension Condition Meets\\ Gromov's $\boldsymbol{n}$-Volumic Scalar Curvature\footnote{This paper is a~contribution to the Special Issue on Scalar and Ricci Curvature in honor of Misha Gromov on his 75th Birthday. The full collection is available at \href{https://www.emis.de/journals/SIGMA/Gromov.html}{https://www.emis.de/journals/SIGMA/Gromov.html}}}

\Author{Jialong DENG}

\AuthorNameForHeading{J.~Deng}

\Address{Mathematisches Institut, Georg-August-Universit\"at, G\"ottingen, Germany}
\Email{\href{mailto:jialong.deng@mathematik.uni-goettingen.de}{jialong.deng@mathematik.uni-goettingen.de}}

\ArticleDates{Received July 29, 2020, in final form January 23, 2021; Published online February 05, 2021}

\Abstract{We study the properties of the $n$-volumic scalar curvature in this note. Lott--Sturm--Villani's curvature-dimension condition ${\rm CD}(\kappa,n)$ was showed to imply Gromov's $n$-volumic scalar curvature $\geq n\kappa$ under an additional $n$-dimensional condition and we show the stability of $n$-volumic scalar curvature~$\geq \kappa$ with respect to smGH-convergence. Then we propose a new weighted scalar curvature on the weighted Riemannian manifold and show its properties.}

\Keywords{curvature-dimension condition; $n$-volumic scalar curvature; stability; weighted scalar curvature ${\rm Sc}_{\alpha, \beta}$}

\Classification{53C23}

\renewcommand{\thefootnote}{\arabic{footnote}}
\setcounter{footnote}{0}

\section{Introduction}

The concept of lower bounded curvature on the metric space or the metric measure space has evolved to a rich theory due to Alexandrov's insight. The stability of Riemannian manifolds with curvature bounded below is another deriving force to extend the definition of the curvature bounded below to a broader space. However, the scalar curvature (of Riemannian metrics) bounded below was yet absent from this picture. Gromov proposed a synthetic treatment of scalar curvature bounded below, which was called the $n$-volumic scalar curvature bounded below, and offered some pertinent conjectures in \cite[Section~26]{Gromov}. Motivated by the ${\rm CD}(\kappa,n)$ condition, we add an $n$-dimension condition to the Gromov's definition and introduce the definition of~${\rm Sc}_{\alpha, \beta}$ on the smooth metric measure space. Details will be given later.

\begin{Theorem}Assume that the metric measure space $(X^n, d, \mu)$ satisfies $n$-dimensional condition and the curvature-dimension condition ${\rm CD}(\kappa, n)$ for $\kappa\geq 0$ and $n\geq 2$, then $(X^n, d, \mu)$ satisfies ${\rm Sc}^{{\rm vol}_n}(X^n)\geq n\kappa$.
\end{Theorem}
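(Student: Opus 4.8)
The plan is to combine the generalized Bishop--Gromov volume comparison, which holds under ${\rm CD}(\kappa,n)$, with the infinitesimal Euclidean normalization provided by the $n$-dimensional condition, and then to read off the scalar curvature bound from the Taylor expansion of the model volume function. First I would recall that a metric measure space satisfying ${\rm CD}(\kappa,n)$ with $\kappa\ge 0$ obeys the Bishop--Gromov inequality (Sturm): for every $x\in X^n$ the function
\[
r\longmapsto \frac{\mu\big(B_r(x)\big)}{v_{\kappa,n}(r)}
\]
is non-increasing on $(0,\infty)$ — on $\big(0,\pi\sqrt{(n-1)/\kappa}\,\big)$ when $\kappa>0$ — where $v_{\kappa,n}(r)=n\omega_n\int_0^r s_\kappa(t)^{n-1}\,{\rm d}t$ is the volume of an $r$-ball in the simply connected $n$-dimensional model space of constant Ricci curvature $\kappa$, i.e.\ constant sectional curvature $\kappa/(n-1)$, and $\omega_n$ is the volume of the Euclidean unit $n$-ball.

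Next I would use the $n$-dimensional condition on $(X^n,d,\mu)$, whose role is precisely to guarantee the infinitesimal normalization $\mu(B_r(x))=(1+o(1))\,\omega_n r^n$ as $r\downarrow 0$ (for every $x$). Since $v_{\kappa,n}(r)/(\omega_n r^n)\to 1$ as $r\downarrow 0$, this yields $\limsup_{r\downarrow 0}\mu(B_r(x))/v_{\kappa,n}(r)\le 1$; combined with the monotonicity from the previous step, the supremum of the ratio is attained in the limit $r\downarrow 0$, and therefore $\mu(B_r(x))\le v_{\kappa,n}(r)$ for all admissible $r$ and all $x\in X^n$.

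Finally I would expand the model volume function. Using $s_\kappa(t)=t\big(1-\tfrac{\kappa}{6(n-1)}t^2+O(t^4)\big)$ one computes
\[
v_{\kappa,n}(r)=\omega_n r^n\left(1-\frac{n\kappa}{6(n+2)}\,r^2+O\big(r^4\big)\right),
\]
which is exactly the small-ball volume asymptotics of a Riemannian manifold of scalar curvature $n\kappa$ — note that $n\kappa$ is the scalar curvature of the model space $M_{\kappa,n}$, consistent with the conclusion. Inserting this into $\mu(B_r(x))\le v_{\kappa,n}(r)$ and unwinding Gromov's definition of ${\rm Sc}^{{\rm vol}_n}$ (the $\varepsilon$-quantified comparison of volumes of small balls with the Euclidean model corrected to second order) then gives ${\rm Sc}^{{\rm vol}_n}(X^n)\ge n\kappa$.

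I expect the main obstacle to be bookkeeping at the level of definitions rather than hard analysis: one must verify that the $\varepsilon$-quantified form of Gromov's ${\rm Sc}^{{\rm vol}_n}\ge \kappa$ follows from the clean inequality $\mu(B_r(x))\le v_{\kappa,n}(r)$ (so the lower-order error terms are harmless), that the $n$-dimensional condition as stated in the paper really forces the $r\downarrow 0$ normalization in the needed (possibly uniform) sense, and that the Bishop--Gromov comparison is being applied in its purely measure-theoretic ${\rm CD}(\kappa,n)$ form, without any infinitesimally Hilbertian hypothesis. The cases $\kappa=0$ (Euclidean model) and $\kappa>0$ (round sphere of radius $\sqrt{(n-1)/\kappa}$) are handled uniformly by the same expansion, and since only small radii enter, the diameter bound of the model when $\kappa>0$ causes no difficulty; the hypotheses $\kappa\ge 0$ and $n\ge 2$ are what make $v_{\kappa,n}$ and its expansion well defined and the inequality $v_{\kappa,n}(r)\le\omega_n r^n$ valid.
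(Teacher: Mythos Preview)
Your approach is correct and essentially identical to the paper's: both use the Bishop--Gromov monotonicity from ${\rm CD}(\kappa,n)$, normalize the ratio at $r\downarrow 0$ via the $n$-dimensional condition to obtain the clean Bishop inequality $\mu(B_r(x))\le v_{\kappa,n}(r)$, and then read off the scalar curvature bound from the second-order expansion $v_{\kappa,n}(r)=\omega_n r^n\big(1-\tfrac{n\kappa}{6(n+2)}r^2+O(r^4)\big)$. The only point where the paper is more explicit is the final ``unwinding'' step you flag as bookkeeping: the definition of ${\rm Sc}^{{\rm vol}_n}\ge n\kappa$ is a comparison with balls in $S^2(\gamma)\times\mathbf{R}^{n-2}$ for $\gamma>\sqrt{2/(n\kappa)}$, and the paper closes the argument either by matching the order-$r^2$ coefficients (the scalar curvature of $S^2(\gamma)\times\mathbf{R}^{n-2}$ is $2\gamma^{-2}<n\kappa$) or, equivalently, by invoking $S^n_{\kappa/(n-1)}<_{\rm vol}S^2(\gamma)\times\mathbf{R}^{n-2}$.
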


\begin{Theorem}
If compact metric measure spaces $(X_i^n, d_i, \mu_i)$ with ${\rm Sc}^{{\rm vol}_n}(X^n_i)\geq \kappa\geq 0$ and SC-radius $r_{x^n_i}\geq R>0$ and $(X_i^n, d_i, \mu_i)$ strongly measured Gromov--Hausdorff converge to the compact metric measure space $(X^n, d, \mu)$ with $n$-dimensional condition, then~$X^n$ also satisfies ${\rm Sc}^{{\rm vol}_n}(X^n)\geq \kappa$ and the SC-radius $r_{X^n}\geq R$.
\end{Theorem}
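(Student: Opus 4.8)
The plan is to transfer the defining volume inequality for ${\rm Sc}^{{\rm vol}_n}(\,\cdot\,)\geq\kappa$ from the approximating spaces $X_i^n$ to the limit $X^n$ ball by ball. Strongly measured Gromov--Hausdorff convergence supplies, for each $i$, an $\epsilon_i$-isometry $\psi_i\colon X_i^n\to X^n$ with $\epsilon_i\to 0$ together with $(\psi_i)_*\mu_i\rightharpoonup\mu$ weakly; compactness is what makes these two properties available simultaneously. Recall that ${\rm Sc}^{{\rm vol}_n}(X_i^n)\geq\kappa$ with SC-radius $\geq R$ means that for every $y\in X_i^n$ and every $0<s<R$ one has $\mu_i(B_s(y))\leq V(s)$, where $V=V_{n,\kappa}$ is the explicit continuous model function built into the definition. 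Since $X^n$ is assumed to satisfy the $n$-dimensional condition, establishing the same inequality for $\mu$ on $X^n$ with threshold $R$ is exactly the claim: note that the $n$-dimensional condition is imposed on the limit precisely because it need not be preserved under smGH limits, whereas the one-sided volume bound is.

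Fix $x\in X^n$ and $0<\rho<R$. Pick $x_i\in X_i^n$ with $d_X(\psi_i(x_i),x)\leq\epsilon_i$, which is possible since $\psi_i$ has $\epsilon_i$-dense image. If $z\in\psi_i^{-1}(B_\rho(x))$ then $d_X(\psi_i(z),\psi_i(x_i))<\rho+\epsilon_i$, and since $\psi_i$ distorts distances by at most $\epsilon_i$ we get $d_i(z,x_i)<\rho+2\epsilon_i$; hence $\psi_i^{-1}(B_\rho(x))\subseteq B_{\rho+2\epsilon_i}(x_i)$. For all $i$ large enough, $\rho+2\epsilon_i<R\leq r_{x_i}$, so the defining inequality on $X_i^n$ applies at $x_i$ for radius $\rho+2\epsilon_i$, and therefore
\[
(\psi_i)_*\mu_i\big(B_\rho(x)\big)=\mu_i\big(\psi_i^{-1}(B_\rho(x))\big)\leq\mu_i\big(B_{\rho+2\epsilon_i}(x_i)\big)\leq V(\rho+2\epsilon_i).
\]
By the portmanteau characterization of weak convergence, $\mu(U)\leq\liminf_i(\psi_i)_*\mu_i(U)$ for every open $U$; applying this with $U=B_\rho(x)$ and using continuity of $V$,
\[
\mu\big(B_\rho(x)\big)\leq\liminf_{i\to\infty}V(\rho+2\epsilon_i)=V(\rho).
\]
As $x$ and $\rho\in(0,R)$ were arbitrary, $X^n$ satisfies ${\rm Sc}^{{\rm vol}_n}(X^n)\geq\kappa$ with $r_{X^n}\geq R$.

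The genuinely delicate point, and the step I expect to require the most care, is the direction of the inequalities: weak convergence controls $\mu$ of open sets only from below and of closed sets only from above, so it is essential that $\mu(B_\rho(x))$ sits on the open-set, bounded-below side, and that the Gromov--Hausdorff error is absorbed by \emph{enlarging} the comparison radius on the $X_i$-side to $\rho+2\epsilon_i$ while staying below the uniform SC-radius bound $R$; continuity of $V$ then closes the gap as $i\to\infty$. The remaining points — that smGH convergence of compact spaces yields the maps $\psi_i$ with both properties, and the $\epsilon_i$-bookkeeping in the ball inclusion — are routine. If the definition of ${\rm Sc}^{{\rm vol}_n}$ carries an auxiliary $\varepsilon$-relaxation in addition to the SC-radius, the same scheme applies verbatim after fixing that $\varepsilon$ first.
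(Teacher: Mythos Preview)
Your argument is essentially the paper's: pull back a ball under the $\epsilon_i$-isometry, enlarge the radius to swallow the distortion, apply the hypothesis on $X_i^n$, and pass to the limit using continuity of the model volume. The paper writes the ball inclusion as $f_i^{-1}(B_r(x))\subset B_{r+4\epsilon_i}(x_i)$ (using the approximate inverse), but your $2\epsilon_i$ version is just as good.

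Two small corrections. First, you misstate the hypothesis: smGH-convergence is defined with \emph{strong} (setwise) convergence of the pushforward measures, i.e.\ $(\psi_i)_*\mu_i(A)\to\mu(A)$ for every Borel $A$, not merely weak convergence. So the portmanteau step is unnecessary: one has $\mu(B_\rho(x))=\lim_i(\psi_i)_*\mu_i(B_\rho(x))$ outright, and this is exactly how the paper passes to the limit. Your weak-convergence argument is not wrong (setwise convergence certainly gives the open-set inequality), but it obscures that the strong topology is the whole point of the ``s'' in smGH.

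Second, for $\kappa>0$ the definition requires a \emph{strict} inequality against $S^2(\gamma)\times\mathbf{R}^{n-2}$ for every $\gamma>\sqrt{2/\kappa}$, not a single non-strict bound $\leq V(\rho)$. Your limit only yields $\mu(B_\rho(x))\leq {\rm vol}_{S\times E}(B_\rho)$ for each such $\gamma$; to recover the strict inequality for a given $\gamma$ one compares to a slightly smaller $\gamma'\in(\sqrt{2/\kappa},\gamma)$ and uses monotonicity of the comparison volume in $\gamma$. The paper carries out this last step explicitly, and your closing caveat about an ``auxiliary $\varepsilon$-relaxation'' is pointing at exactly this.
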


\begin{Theorem}
 Let $\big(M^n, g, {\rm e}^{-f}\,{\rm dVol}_g\big)$ be the closed smooth metric measure space with ${\rm Sc}_{\alpha, \beta}>0$, then we have the following conclusions:
 \begin{itemize}\itemsep=0pt
 \item[$1.$] If $M^n$ is a spin manifold, $\alpha \in \mathbb{R}$ and $\beta \geq \frac{|\alpha|^2}{4}$, then the harmonic spinors of $M^n$ vanish.
 \item[$2.$] If the dimension $n\geq 3$, $\alpha \in \mathbb{R}$ and $\beta \geq \frac{(n-2)|\alpha|^2}{4(n-1)}$, then there is a metric $\tilde{g}$ conformal to $g$ with positive scalar curvature.
 \item[$3.$] If the dimension $n\geq 3$, $\alpha=2$, $\beta \geq \frac{n-2}{n-1}$ and $\big(N^{n-1},\bar{g}\big)$ is the compact $L_f$-stable minimal hypersurface of $\big(M^n, g, {\rm e}^{-f}\,{\rm dVol}_g\big)$, then there exists a PSC-metric conformal to~$\bar{g}$ on~$N^{n-1}$, where $\bar{g}$ is the induced metric of $g$ on $N^{n-1}$.

 \item[$4.$] Assume $M^n$ is a spin manifold and there exists a smooth $1$-contracting map $h\colon (M^n, g)\to (S^n, g_{\rm st})$ of non-zero degree. If $\alpha \in \mathbb{R}$ , $\beta \geq \frac{|\alpha|^2}{4}$ and ${\rm Sc}_{\alpha, \beta}\geq n(n-1)$, then $h$ is an isometry between the metrics $g$ and $g_{\rm st}$.
\end{itemize}
\end{Theorem}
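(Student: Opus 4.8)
The engine behind all four parts is a weighted Weitzenb\"ock identity, which I would establish first. Write $\beta_0:=\frac{|\alpha|^2}{4}$ and recall ${\rm Sc}_{\alpha,\beta}={\rm Sc}_g+\alpha\Delta_g f-\beta|\nabla f|^2$ (with $\Delta_g={\rm tr}_g\,{\rm Hess}_g$). The elementary but crucial remark is that $\beta\geq\beta_0$ forces ${\rm Sc}_{\alpha,\beta}\leq{\rm Sc}_{\alpha,\beta_0}$ pointwise, so one may always replace $\beta$ by $\beta_0$. For $\beta=\beta_0$, conjugating the relevant Dirac-type operator by ${\rm e}^{(\alpha/4)f}$ produces an operator that is formally self-adjoint for the measure ${\rm e}^{-(\alpha/2)f}\,{\rm dVol}_g$ and whose curvature remainder is exactly $\frac14{\rm Sc}_{\alpha,\beta_0}$: for the spinor Dirac operator, $D_f:={\rm e}^{(\alpha/4)f}\circ D\circ{\rm e}^{-(\alpha/4)f}=D-\frac{\alpha}{4}\,{\rm d}f\,\cdot$ satisfies $D_f^2=\nabla_f^*\nabla+\frac14{\rm Sc}_{\alpha,\beta_0}$, where $\nabla_f^*$ is the adjoint of $\nabla$ for ${\rm e}^{-(\alpha/2)f}\,{\rm dVol}_g$. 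I would derive this by the standard Bochner computation, carrying along the first-order terms produced by ${\rm d}f\,\cdot$ and absorbing the $\nabla_{\nabla f}$ term into $\nabla_f^*$ via integration by parts against the weighted measure.

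With this in hand Part 1 is immediate: if $D\psi=0$ then $D_f\big({\rm e}^{(\alpha/4)f}\psi\big)=0$, and integrating the identity over the closed $M^n$ against ${\rm e}^{-(\alpha/2)f}\,{\rm dVol}_g$ gives $0=\int\big|\nabla\big({\rm e}^{(\alpha/4)f}\psi\big)\big|^2{\rm e}^{-(\alpha/2)f}+\frac14\int{\rm Sc}_{\alpha,\beta_0}\big|{\rm e}^{(\alpha/4)f}\psi\big|^2{\rm e}^{-(\alpha/2)f}$; since ${\rm Sc}_{\alpha,\beta_0}\geq{\rm Sc}_{\alpha,\beta}>0$, both integrands are nonnegative and $\psi\equiv0$.

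For Part 2, by the classical conformal-Laplacian criterion a metric conformal to $g$ with positive scalar curvature exists as soon as the Rayleigh quotient of $L_g=-\frac{4(n-1)}{n-2}\Delta_g+{\rm Sc}_g$ is positive. Given $u\not\equiv0$, integration by parts gives $\int{\rm Sc}_g\,u^2=\int{\rm Sc}_{\alpha,\beta}\,u^2+2\alpha\int u\langle\nabla f,\nabla u\rangle+\beta\int|\nabla f|^2u^2$, hence
\[
\int\Big(\tfrac{4(n-1)}{n-2}|\nabla u|^2+{\rm Sc}_g\,u^2\Big)
=\int{\rm Sc}_{\alpha,\beta}\,u^2+\int\Big(\tfrac{4(n-1)}{n-2}|\nabla u|^2+2\alpha u\langle\nabla f,\nabla u\rangle+\beta|\nabla f|^2u^2\Big).
\]
The first term is at least $\big(\min_M{\rm Sc}_{\alpha,\beta}\big)\|u\|_2^2>0$ by compactness, while the integrand of the second is a nonnegative quadratic form in $\big(|\nabla u|,\,|u|\,|\nabla f|\big)$ exactly because the discriminant inequality $|2\alpha|^2\leq4\cdot\frac{4(n-1)}{n-2}\beta$ is the hypothesis $\beta\geq\frac{(n-2)|\alpha|^2}{4(n-1)}$; thus $\lambda_1(L_g)>0$. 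Part 3 is the minimal-hypersurface descent of the same mechanism: I would write the second variation of the weighted area of the $L_f$-stable $f$-minimal hypersurface $N^{n-1}$, combine the stability inequality with the Gauss equation and the $f$-minimality relation $H=\langle\nabla f,\nu\rangle$ and with ${\rm Sc}_{2,\beta}>0$ (to bound the potential $|A|^2+{\rm Ric}_f(\nu,\nu)$ of the stability operator from below), insert a suitable power of the conformal factor as test function, and so reach positivity of the weighted conformal Laplacian of $\big(N^{n-1},\bar g\big)$, from which the weight is stripped exactly as in Part 2; the values $\alpha=2$, $\beta\geq\frac{n-2}{n-1}$ are precisely what makes this hypersurface step close. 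When $n=3$, $N$ is a surface, the same inequality yields $\int_N{\rm Sc}_N>0$, and Gauss--Bonnet then identifies $N$ with $S^2$, which carries the round PSC metric.

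Part 4 is Llarull's twisted-Dirac argument run in the weighted setting. Pull back a spinor bundle $E:=h^*{\mathbb S}_{S^n}$ (the relevant chirality if $n$ is even) with its induced connection, and form $D_f^E:={\rm e}^{(\alpha/4)f}\circ D^E\circ{\rm e}^{-(\alpha/4)f}$; the weighted Lichnerowicz--Schr\"odinger identity then reads $\big(D_f^E\big)^2=\big(\nabla_f^E\big)^*\nabla^E+\frac14{\rm Sc}_{\alpha,\beta_0}+{\mathcal R}^E$, with ${\mathcal R}^E$ the twisting-curvature endomorphism, which is untouched by the conjugation. Since $S^n$ has constant curvature and $h$ is $1$-contracting, Llarull's pointwise bound ${\mathcal R}^E\geq-\frac{n(n-1)}{4}$ holds, so the zeroth-order term is $\geq\frac14\big({\rm Sc}_{\alpha,\beta_0}-n(n-1)\big)\geq0$. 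On the closed spin manifold $M^n$ the index of $D^E$ is, up to sign, $\deg h\neq0$ by Atiyah--Singer, so $D^E$---hence $D_f^E$---has nontrivial kernel; a kernel section is then $\nabla^E$-parallel and annihilated by the zeroth-order term, and the equality case of Llarull's estimate forces all singular values of ${\rm d}h$ to equal $1$, i.e., $h^*g_{\rm st}=g$ and $h$ is an isometry. The main obstacles are concentrated in Parts 3 and 4: in Part 3 the delicate point is organizing the weighted second-variation and Gauss-equation terms so that precisely the stated constants survive, and in Part 4 it is the equality-case rigidity analysis together with the index input when $n$ is odd (handled by passing to $M^n\times S^1$ or using a ${\rm Cl}_1$-linear index); the Weitzenb\"ock steps themselves are routine once the weighted identity of the first paragraph is available.
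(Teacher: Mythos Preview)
Your proposal is correct, and for Parts~1, 2 and~4 it is sound; Part~3 would also go through as you sketch it, though the details you leave implicit are nontrivial. The main methodological difference from the paper lies in how the weight is handled. For Parts~1 and~4 you conjugate the (twisted) Dirac operator by ${\rm e}^{(\alpha/4)f}$ and obtain a clean weighted Weitzenb\"ock identity $D_f^2=\nabla_f^*\nabla+\tfrac14{\rm Sc}_{\alpha,\beta_0}$ for $\beta_0=|\alpha|^2/4$, which makes the role of the threshold $\beta\geq|\alpha|^2/4$ transparent and avoids any parameter juggling. The paper instead leaves $D$ unmodified, substitutes ${\rm Sc}_g={\rm Sc}_{\alpha,\beta}-\alpha\Delta_g f+\beta|\nabla f|^2$ into the classical Lichnerowicz formula, integrates the $\Delta_g f$ term by parts, and bounds the resulting cross term $\langle\nabla f,\nabla|\psi|^2\rangle$ by Young's inequality with a free constant; optimizing the constant recovers the same condition $\beta\geq|\alpha|^2/4$. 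Your approach is more conceptual and yields the equality-case analysis in Part~4 slightly more directly; the paper's approach is more elementary. Part~2 is essentially identical in both treatments: positivity of the conformal Laplacian via the same Cauchy--Schwarz/discriminant inequality.

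The genuine divergence is Part~3. The paper bypasses the weighted second-variation computation entirely by observing that the conformal change $\tilde g={\rm e}^{-2f/(n-1)}g$ simultaneously (i) converts the $L_f$-stable $f$-minimal hypersurface $N$ into an ordinary stable minimal hypersurface of $(M,\tilde g)$, and (ii) gives ${\rm Sc}_{\tilde g}={\rm e}^{f/(n-1)}\big({\rm Sc}_g+2\Delta_g f-\tfrac{n-2}{n-1}|\nabla f|^2\big)>0$ precisely under the hypotheses $\alpha=2$, $\beta\geq\tfrac{n-2}{n-1}$. One then invokes the classical Schoen--Yau argument verbatim. This is considerably shorter than your direct route through the weighted stability inequality and Gauss equation, and it explains at a glance why the specific constants $\alpha=2$, $\beta\geq\tfrac{n-2}{n-1}$ arise: they are exactly the coefficients in the conformal scalar-curvature formula for this particular change. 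Your approach would work but requires carefully organizing several weighted identities; the paper's conformal reduction is the cleaner path here.
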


The paper is organized as follows. In Section~\ref{CD}, we introduce the notions and show that ${\rm CD}(\kappa,n)$ implies ${\rm Sc}^{{\rm vol}_n}\geq (n-1)\kappa$. In Section~\ref{sm}, we show the stability of spaces with ${\rm Sc}^{{\rm vol}_n}\geq\kappa$. In Section~\ref{smooth mm-space}, we present the properties of the smooth metric measure space with ${\rm Sc}_{\alpha, \beta}>0$.

\section[CD meets n-volumic scalar curvature]{CD meets $\boldsymbol{n}$-volumic scalar curvature}\label{CD}
 The $n$-dimensional Aleksandrov space with curvature $\geq\kappa$ equipped with the volume-measure satisfies Lott--Villani--Sturm's weak curvature-dimension condition for dimension $n$ and curvature $(n-1) \kappa$, i.e., ${\rm CD}((n-1)\kappa, n)$, was shown by Petrunin for $\kappa=0$ (and said that for general curvature $\geq\kappa$ the result followed in a similar way) \cite{MR2869253} and then Zhang--Zhu investigated the general case~\cite{MR2747437}. We will modify Gromov's definition of $n$-volumic scalar curvature bounded below in \cite[Section~26]{Gromov} to fill the picture, which means Lott--Sturm--Villani's Ricci curvature $\geq 0$ implies Gromov's scalar curvature $\geq 0$.

 The metric measure space (mm-space) $X=(X,d,\mu)$ means that $d$ is the complete separable length metric on $X$ and $\mu$ is the locally finite full support Borel measure on $X$ equipped with its Borel $\sigma$-algebra. Say that an mm-space $X=(X,d,\mu)$ is locally volume-wise smaller (or not greater) than another such space $X'=(X',d',\mu')$ and write $X<_{\rm vol} X'$ $(X\leq_{\rm vol} X')$, if all $\epsilon$-balls in $X$ are smaller (or not greater) than the $\epsilon$-balls in $X'$, $\mu(B_{\epsilon}(x))<\mu'(B_{\epsilon}(x')) (\mu(B_{\epsilon}(x))\leq \mu'(B_{\epsilon}(x'))$, for all $x\in X, x'\in X'$ and the uniformly small $\epsilon$ which depends on $X$ and $ X'$.

From now on, the Riemannian 2-sphere $\big(S^2(\gamma),d_S, {\rm vol}_S\big)$ is endowed with round metric such that the scalar curvature equal to $2\gamma^{-2}$, $\big(\mathbf{R}^{n-2},d_E, {\rm vol}_E\big)$ is endowed with Euclidean metric with flat scalar curvature and the product manifold $S^2(\gamma)\times \mathbf{R}^{n-2}$ is endowed with the Pythagorean product metrics $d_{S\times E}:=\sqrt{d_S^2 + d_E^2}$ and the volume ${\rm vol}_{S\times E}:={\rm vol}_S \otimes {\rm vol}_E$.

Thus, we have $S^2(\gamma)<_{\rm vol} \mathbf{R}^2 $. If $0<\gamma_1< \gamma_2$, then $S^2(\gamma_1)<_{\rm vol} S^2(\gamma_2)$. Furthermore, $S^2(\gamma)\times \mathbf{R}^{n-2}<_{\rm vol} \mathbf{R}^n$. If $0<\gamma_1< \gamma_2$, then $S^2(\gamma_1)\times \mathbf{R}^{n-2}<_{\rm vol} S^2(\gamma_2)\times \mathbf{R}^{n-2}$.

\begin{Definition}[Gromov's $n$-volumic scalar curvature]
 Gromov's $n$-volumic scalar curvature of $X$ is bounded below by $0$ for $X=(X, d, \mu)$ if~$X$ is locally volume-wise not greater than $\mathbf{R}^n$.

 Gromov's $n$-volumic scalar curvature of $X$ bounds from below by $\kappa>0$ for $X=(X, d, \mu)$ if~$X$ is locally volume-wise smaller than $S^2(\gamma)\times \mathbf{R}^{n-2}$ for all $\gamma>\sqrt{\frac{2}{\kappa}}$, i.e., $X<_{\rm vol} S^2(\gamma)\times \mathbf{R}^{n-2}$ and $\gamma>\sqrt{\frac{2}{\kappa}}$, where $S^2(\gamma)\times \mathbf{R}^{n-2}=\big(S^2(\gamma)\times \mathbf{R}^{n-2},d_{S\times E},{\rm vol}_{S\times E}\big)$.
\end{Definition}

The $n$-volumic scalar curvature is sensitive to the scaling of the measure, but the curvature condition ${\rm CD}(\kappa, n)$ of Lott--Villani--Sturm \cite[Definition~1.3]{MR2237207} is invariant up to scalars of the measure only \cite[Proposition~1.4(ii)]{MR2237207}. Therefore, the $n$-dimensional condition needs to be put into the definition of Gromov's $n$-volumic scalar curvature. In fact, the $n$-dimensional condition is the special case of Young's point-wise dimension in dynamical systems \cite[Theorem~4.4]{MR684248}.

\begin{Definition}[$n$-dimensional condition]
For given positive natural number $n$, the mm-space $X=(X, d, \mu)$ satisfies the $n$-dimensional condition if
\begin{align*}
 \lim\limits_{r \to 0}\frac{\mu(B_r(x))}{{\rm vol}_E(B_r(\mathbf{R}^n))}=1
\end{align*}
 for every $x \in X$, where $B_r(\mathbf{R}^n)$ is the closed $r$-ball in the Euclidean space $\mathbf{R}^n$ and the $B_r(x)$ is the closed $r$-ball with the center $x\in X$.
\end{Definition}

 From now on, the superscript of $n$ in the space $X^n$ means the mm-space $(X^n, d, \mu)$ satisfies $n$-dimensional condition.

 Note that a closed smooth $n$-manifold $M^n$ $(n\geq 3)$ admits a Riemannian metric with constant negative scalar curvature and a Riemannian metric of non-negative scalar curvature which is not identically zero, then by a conformal change of the metric we get a metric of positive scalar curvature according to Kazdan--Warner theorem \cite{MR365409}. Furthermore, if there is a scalar-flat Riemannian metric $g$ on $M^n$, but $g$ is not Ricci-flat metric, then $g$ can be deformed to a~metric with positive scalar curvature according to Kazdan theorem \cite[Theorem~B]{MR675736} or by using Ricci-flow with an easy argument. Hence we will focus more on promoting the positive scalar curvature to positive $n$-volumic scalar curvature.

\begin{Definition}[$n$-volumic scalar curvature]
Assume $X^n=(X^n, d, \mu)$ is the compact mm-space and satisfies the $n$-dimensional condition, we call
\begin{enumerate}\itemsep=0pt
\item[$1.$] the $n$-volumic scalar curvature of $X^n$ is positive, i.e., ${\rm Sc}^{{\rm vol}_n}(X^n)> 0$, if there exists $r_{X^n} > 0$ such that the measures of $\epsilon$-balls in $X^n$ are smaller than the volumes of $\epsilon$-balls in~$\mathbf{R}^n$ for $0 < \epsilon \leq r_{X^n}$.
\item[$2.$] the $n$-volumic scalar curvature of $X^n$ is bounded below by 0, i.e., ${\rm Sc}^{{\rm vol}_n}(X^n)\geq 0$, if there exists $r_{X^n} > 0$ such that the measures of $\epsilon$-balls in $X^n$ are not greater than the volumes of $\epsilon$-balls in~$ \mathbf{R}^{n}$ for $0 < \epsilon \leq r_{X^n}$.

The $r_{X^n}$ is called scalar curvature radius $($SC-radius$)$ of $X^n$ for ${\rm Sc}^{{\rm vol}_n}(X^n)\geq 0$.

\item[$3.$] the $n$-volumic scalar curvature of $X^n$ is bounded below by $\kappa>0$, i.e., ${\rm Sc}^{{\rm vol}_n}(X^n)\geq\kappa>0$, if, for any $\gamma$ with $\gamma> \sqrt{\frac{2}{\kappa}}$, there exists $r_{X^n,\gamma} >0$ such that the measures of $\epsilon$-balls in $X^n$ are smaller than the volumes of $\epsilon$-balls in $S^2(\gamma)\times \mathbf{R}^{n-2}$ for $0 < \epsilon \leq r_{X^n,\gamma}$.

We call $r_{X^n}:=\inf\limits_{\gamma> \sqrt{\frac{2}{\kappa}}} r_{X^n,\gamma}$ is the SC-radius of $X^n$ for ${\rm Sc}^{{\rm vol}_n}(X^n)\geq\kappa>0$.
 \end{enumerate}
\end{Definition}

In particular, we will focus on the case of $\inf\limits_{\gamma> \sqrt{\frac{2}{\kappa}}} r_{X^n,\gamma}\neq 0$ for stability in Section~\ref{sm}.

If the mm-space $X^n$ is locally compact, then the definition of the $n$-volumic scalar curvature bounded below only modifies the definition of the $r_{X^n,\gamma} >0$ to a positive continuous function of~$X^n$.

Two mm-spaces $(X^n, d, \mu)$ and $(X^n_1, d_1, \mu_1)$ are isometric if there exists a one-to-one map $f\colon X^n \to X^n_1$ such that $d_1(f(a),f(b))=d(a,b)$ for $a$ and $b$ are in $X^n$ and $f_*\mu=\mu_1$, where $f_*\mu$ is the push-forward measure, i.e., $f_*\mu(U)=\mu\big(f^{-1}(U)\big)$ for a measureable subset $U\subset X^n_1$. If~$X^n$ satisfies ${\rm Sc}^{{\rm vol}_n}(X^n)\geq\kappa\geq 0$, then each mm-space $(X^n_1, d_1, \mu_1)$ that is isometric to $(X^n, d, \mu)$ also satisfies ${\rm Sc}^{{\rm vol}_n}(X^n_1)\geq\kappa\geq 0$.

\begin{Proposition}Let $g$ be a $C^2$-smooth Riemannian metric on a closed oriented $n$-mani\-fold~$M^n$ with induced metric measure space $(M^n, d_g, {\rm dVol}_g)$, then the scalar curvature of $g$ is positive, ${\rm Sc}_g> 0$, if and only if ${\rm Sc}^{{\rm vol}_n}(M^n)>0 $, and ${\rm Sc}_g\geq\kappa> 0$ if and only if~${\rm Sc}^{{\rm vol}_n}(M^n)\geq\kappa> 0$.
\end{Proposition}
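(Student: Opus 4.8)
The plan is to reduce the statement to a local volume comparison and then invoke the classical asymptotic expansion of the Riemannian volume of small geodesic balls. The key analytic input is the well-known expansion
\begin{equation*}
\mathrm{Vol}_g(B_\epsilon(x)) = \omega_n \epsilon^n\left(1 - \frac{\mathrm{Sc}_g(x)}{6(n+2)}\epsilon^2 + O(\epsilon^3)\right),
\end{equation*}
where $\omega_n = \mathrm{vol}_E(B_1(\mathbf{R}^n))$, and the analogous expansion for the model space $S^2(\gamma)\times\mathbf{R}^{n-2}$, whose scalar curvature is the constant $2\gamma^{-2}$. Since $M^n$ is closed, $\mathrm{Sc}_g$ is continuous hence attains a minimum, and the $O(\epsilon^3)$ error is uniform in $x$ by compactness; this uniformity is what will let us pass from pointwise scalar curvature bounds to the existence of a single SC-radius $r_{M^n}$ (resp. $r_{M^n,\gamma}$) working for all points simultaneously.

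First I would check that the induced metric measure space $(M^n, d_g, \mathrm{dVol}_g)$ satisfies the $n$-dimensional condition: dividing the expansion above by $\omega_n\epsilon^n = \mathrm{vol}_E(B_\epsilon(\mathbf{R}^n))$ and letting $\epsilon\to 0$ gives the limit $1$ at every point, so the definitions in the excerpt apply and the notation $M^n$ is justified. Next, for the forward direction of the first equivalence, suppose $\mathrm{Sc}_g > 0$; by compactness $\min_{M^n}\mathrm{Sc}_g =: s_0 > 0$, and by the uniform expansion there is $r_{M^n}>0$ so that for all $x$ and all $0<\epsilon\le r_{M^n}$ the bracketed factor is strictly less than $1$, i.e., $\mathrm{Vol}_g(B_\epsilon(x)) < \mathrm{vol}_E(B_\epsilon(\mathbf{R}^n))$; this is exactly $\mathrm{Sc}^{\mathrm{vol}_n}(M^n)>0$. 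Conversely, if $\mathrm{Sc}_g$ were non-positive somewhere, say $\mathrm{Sc}_g(x_0)\le 0$, then the expansion at $x_0$ gives $\mathrm{Vol}_g(B_\epsilon(x_0)) \ge \omega_n\epsilon^n(1 + O(\epsilon^3))$, which exceeds $\mathrm{vol}_E(B_\epsilon(\mathbf{R}^n))$ for small $\epsilon$ when $\mathrm{Sc}_g(x_0)<0$, and requires a finer ($\epsilon^4$) term when $\mathrm{Sc}_g(x_0)=0$; in either case no SC-radius can exist, contradicting $\mathrm{Sc}^{\mathrm{vol}_n}(M^n)>0$. Hence the first equivalence.

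For the second equivalence, involving a quantitative bound $\kappa>0$, I would compare against the model $S^2(\gamma)\times\mathbf{R}^{n-2}$ rather than against $\mathbf{R}^n$. The volume expansion of the model at any point has bracketed factor $1 - \frac{2\gamma^{-2}}{6(n+2)}\epsilon^2 + O(\epsilon^3)$. If $\mathrm{Sc}_g\ge\kappa$, then for every $\gamma>\sqrt{2/\kappa}$ we have $\mathrm{Sc}_g(x) > 2\gamma^{-2}$ uniformly in $x$ (strict, with a gap), so comparing the two expansions and using uniformity of the error gives $r_{M^n,\gamma}>0$ with $\mathrm{Vol}_g(B_\epsilon(x)) < \mathrm{vol}_{S\times E}(B_\epsilon)$ for $0<\epsilon\le r_{M^n,\gamma}$, which is the definition of $\mathrm{Sc}^{\mathrm{vol}_n}(M^n)\ge\kappa$. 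Conversely, if $\mathrm{Sc}_g(x_1)<\kappa$ at some point, choose $\gamma$ with $\sqrt{2/\kappa}<\gamma$ and $\mathrm{Sc}_g(x_1) < 2\gamma^{-2}$; then the expansion at $x_1$ forces $\mathrm{Vol}_g(B_\epsilon(x_1)) > \mathrm{vol}_{S\times E}(B_\epsilon(x_1))$ for all small $\epsilon$, so no $r_{M^n,\gamma}$ exists and $\mathrm{Sc}^{\mathrm{vol}_n}(M^n)\ge\kappa$ fails.

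The main obstacle I anticipate is the borderline case in the converse direction when $\mathrm{Sc}_g$ equals the threshold value somewhere but is not $\equiv$ that value: the $\epsilon^2$ coefficients of the two expansions then agree at that point and one must descend to the $\epsilon^4$ term, whose coefficient involves $|\mathrm{Ric}|^2$, $|\mathrm{Rm}|^2$, $\Delta\mathrm{Sc}_g$ and $\mathrm{Sc}_g^2$ (the Gray coefficient), and argue that strictness of the $n$-volumic inequality cannot hold near such a point. A cleaner route around this is to observe that $\mathrm{Sc}^{\mathrm{vol}_n}(M^n)>0$ asks for a \emph{strict} ball inequality on an interval $(0,r_{M^n}]$, so it suffices that the leading nonconstant term be favorable at every point; the only genuinely delicate spot is ruling out the possibility that the leading correction vanishes identically along $M^n$ while the next does not, which one handles by noting $\mathrm{Sc}_g\equiv 0$ would then be forced and treating that flat/Ricci-flat-obstruction case separately (cf.\ the Kazdan--Warner discussion above), or simply by invoking continuity of $\mathrm{Sc}_g$ together with the sign of the $\epsilon^2$ term at nearby points where $\mathrm{Sc}_g\ne 0$.
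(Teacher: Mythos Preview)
Your approach is essentially the same as the paper's: both arguments rest on the asymptotic expansion $\mathrm{dVol}_g(B_r(x)) = \mathrm{vol}_E(B_r(\mathbf{R}^n))\big[1 - \tfrac{\mathrm{Sc}_g(x)}{6(n+2)}r^2 + O(r^4)\big]$, use it first to verify the $n$-dimensional condition, and then combine it with compactness of $M^n$ to pass between the pointwise scalar curvature bound and the uniform small-ball comparison. Your treatment of uniformity is more explicit than the paper's, and you carry out the comparison against $S^2(\gamma)\times\mathbf{R}^{n-2}$ in the same way, picking $\gamma$ between $\sqrt{2/\kappa}$ and $\sqrt{2/\mathrm{Sc}_g(x_1)}$ exactly as the paper does (the paper writes $\gamma=\sqrt{2/(\kappa-\epsilon/2)}$).

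The one place worth flagging is your discussion of the borderline case $\mathrm{Sc}_g(x_0)=0$ in the converse direction. You are right that this is the delicate point; however, your proposed fixes do not actually close it. Appealing to ``continuity of $\mathrm{Sc}_g$ together with the sign of the $\epsilon^2$ term at nearby points'' fails because that coefficient tends to $0$ as you approach $x_0$, and restricting attention to the case $\mathrm{Sc}_g\equiv 0$ misses the real issue of pointwise (not identical) vanishing. In fairness, the paper's own proof simply asserts ``then $\mathrm{Sc}_g$ must be greater than $0$'' without addressing this case at all; the subsequent Remark in the paper discusses the $r^4$ coefficient only for the scalar-flat situation $\mathrm{Sc}_g\equiv 0$, where it notes the sign can go either way. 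So you have correctly located a subtlety that the paper leaves implicit rather than introduced an error of your own. (A minor point: the remainder in the expansion is $O(r^4)$, not $O(\epsilon^3)$; there is no odd-order term.)
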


\begin{proof}For a $C^2$-smooth Riemannian metric $g$, one has
\begin{gather*}
{\rm dVol}_g(B_r(x)) = {\rm vol}_E(B_r(\mathbf{R}^n))\left[1 - \frac{{\rm Sc}_g(x)}{6(n+2)} r^2 + O\big(r^4\big)\right]
 \end{gather*}
for $B_r(x) \subset M^n$ as $r\to 0$. Hence $(M^n, d_g, {\rm dVol}_g)$ satisfies the $n$-dimensional condition.

If we have ${\rm Sc}_g > 0$, then, since $M^n$ is compact, there exists $r_{M^n} > 0$, so that ${\rm dVol}_g(B_r(x))< {\rm vol}_E(B_r(\mathbf{R}^n))$ for all $0<r \leq r_{M^n}$. On the other hand, if there exists $r_{M^n} > 0$ such that ${\rm dVol}_g(B_r(x))< {\rm vol}_E(B_r(\mathbf{R}^n))$ for all $0<r \leq r_{M^n}$, then ${\rm Sc}_g$ must be greater than 0.

If ${\rm Sc}^{{\rm vol}_n}(M^n)\geq\kappa> 0$, then ${\rm Sc}_g \geq \kappa > 0$. Otherwise, assume there exist small $\epsilon>0$ such that ${\rm Sc}_g \geq \kappa-\epsilon>0$. That means that there exists a point $x_0$ in $M^n$ such that ${\rm Sc}_g(x_0)= \kappa-\epsilon$, as $M^n$ is compact and the scalar curvature is a continuous function on $M^n$. Thus, we can find a small $r$-ball $B_r(x_0)$ such that the volume of $B_r(x_0)$ is greater than the volume of the $r$-ball in the $S^2(\gamma)\times \mathbf{R}^{n-2}$ for $\gamma=\sqrt{\frac{2}{\kappa-\frac{\epsilon}{2}}}$, which is a contradiction.

On the other hand, ${\rm Sc}_g\geq\kappa> 0$ implies ${\rm Sc}^{{\rm vol}_n}(M^n)\geq\kappa> 0$. Assume ${\rm Sc}_g(x_1)=\kappa$ for some $x_1\in M^n$, then there exists $r_1$ such that ${\rm dVol}_g(B_{r_1}(x))\leq {\rm dVol}_g(B_{r_1}(x_1))$ for $r_1$-balls in $M^n$ and
\begin{gather*}
 {\rm dVol}_g(B_r(x_1)) = {\rm vol}_E(B_r(\mathbf{R}^n))\left[1 - \frac{\kappa}{6(n+2)} r^2 + O\big(r^4\big)\right]
\end{gather*}
 as $r\to 0$. Thus, for any $\gamma$ with $\gamma> \sqrt{\frac{2}{\kappa}}$, there exists $r_{M^n,\gamma} >0$ such that the measures of $\epsilon$-balls in $M^n$ are smaller than the volumes of $\epsilon$-balls in $S^2(\gamma)\times \mathbf{R}^{n-2}$ for $0 < \epsilon \leq r_{M^n,\gamma}$, i.e., ${\rm Sc}^{{\rm vol}_n}(M^n)\geq\kappa> 0$.
\end{proof}

Therefore, we have $S^n_{\frac{\kappa}{n-1}}<_{\rm vol} S^2(\gamma)\times \mathbf{R}^{n-2}$ for all $\gamma>\sqrt{\frac{2}{n\kappa}}$. Here $S^n_{\frac{\kappa}{n-1}}$ is the Riemannian manifold $S^n$ with constant sectional curvature $\frac{\kappa}{n-1}$.

\begin{Remark}For a closed smooth Riemannian manifold $(M^n,g)$, ${\rm Sc}^{{\rm vol}_n}(M^n)\geq 0 $ implies ${\rm Sc}_g\geq 0$. Otherwise, there exists a point in $M^n$ such that the scalar curvature is negative, then the volume of small ball will be greater than the volume of the small ball in Euclidean space, which is a contradiction.

On the other hand, one can consider the case of the scalar-flat metric, i.e., ${\rm Sc}_g\equiv 0$. If $g$ is a strongly scalar-flat metric, meaning a metric with scalar curvature zero such that $M^n$ has no metric with positive scalar curvature, then~$g$ is also Ricci flat according to Kazdan theorem above. Thus, we have
 \begin{gather*}
 {\rm dVol}_g(B_r(x)) = {\rm vol}_E(B_r(\mathbf{R}^n))\left[1 - \frac{\|{\rm Rie}(x)\|^2_g}{120(n+2)(n+4)} r^4 + O\big(r^6\big)\right]
 \end{gather*}
 for $B_r(x) \subset M^n$ as $r\to 0$ {\rm \cite[Theorem~3.3]{MR521460}}. Here ${\rm Rie}$ is the Riemannian tensor. Therefore, if~$g$ is a not flat metric, then $M^n<_{\rm vol} \mathbf{R}^n$. If $g$ is a flat metric, then $M^n\leq_{\rm vol} \mathbf{R}^n$. Thus ${\rm Sc}_g\geq 0$ implies ${\rm Sc}^{{\rm vol}_n}(M^n)\geq 0 $ for a strongly scalar-flat metric $g$.

 However, ${\rm Sc}_g\geq 0$ may not imply ${\rm Sc}^{{\rm vol}_n}(M^n)\geq 0$. There are a lot of scalar-flat metrics but not strongly scalar flat metrics, i.e., ${\rm Sc}_g\equiv 0$ but not ${\rm Ricc}_g\neq 0$. For instance, the product metric on $S^2(1)\times \Sigma$, where $\Sigma$ is a closed hyperbolic surface, is the scalar-flat metric, but not the Ricci-flat metric. For those metrics, we have
 \begin{gather*}
 {\rm dVol}_g(B_r(x)) = {\rm vol}_E(B_r(\mathbf{R}^n))\left[1 + \frac{-3\|{\rm Rie}(x)\|^2_g+8\|{\rm Ricc}(x)\|^2_g}{360(n+2)(n+4)} r^4 + O\big(r^6\big)\right]
 \end{gather*}
 for $B_r(x) \subset M^n$ as $r\to 0$ {\rm \cite[Theorem~3.3]{MR521460}}. If $8\|{\rm Ricc}(x)\|^2_g>-3\|{\rm Rie}(x)\|^2_g$ for some point, then ${\rm Sc}_g\geq 0$ does not imply ${\rm Sc}^{{\rm vol}_n}(M^n)\geq 0$.
\end{Remark}

\begin{Theorem}
Assume that the mm-space $(X^n, d, \mu)$ satisfies $n$-dimensional condition and the curvature-dimension condition ${\rm CD}(\kappa, n)$ for $\kappa\geq 0$ and $n\geq 2$, then $(X^n, d, \mu)$ satisfies ${\rm Sc}^{{\rm vol}_n}(X^n)\geq n\kappa$.
\end{Theorem}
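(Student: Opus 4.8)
The plan is to extract from ${\rm CD}(\kappa,n)$ a Bishop--Gromov type comparison for the measures of small metric balls in $X^n$, to use the $n$-dimensional condition to upgrade it to an absolute volume bound, and then to read off ${\rm Sc}^{{\rm vol}_n}(X^n)\geq n\kappa$ from Gromov's definition together with the model comparison recorded just before the statement.

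First I would invoke the generalized Bishop--Gromov inequality that holds on every ${\rm CD}(\kappa,n)$ space (Sturm; Lott--Villani): for each $x\in X^n$ the function $r\mapsto \mu(B_r(x))/v_{\kappa,n}(r)$ is non-increasing in $r$, where $v_{\kappa,n}(r)$ denotes the volume of an $r$-ball in the $n$-dimensional simply connected model space of constant Ricci curvature $\kappa$; thus $v_{0,n}(r)={\rm vol}_E(B_r(\mathbf{R}^n))$ when $\kappa=0$, while for $\kappa>0$ the quantity $v_{\kappa,n}(r)$ is the volume of an $r$-ball in $S^n_{\frac{\kappa}{n-1}}$. Since $v_{\kappa,n}(r)/{\rm vol}_E(B_r(\mathbf{R}^n))\to 1$ as $r\to 0$, the $n$-dimensional condition $\mu(B_r(x))/{\rm vol}_E(B_r(\mathbf{R}^n))\to 1$ forces $\mu(B_r(x))/v_{\kappa,n}(r)\to 1$ as $r\to 0$, and then the monotonicity yields $\mu(B_r(x))\leq v_{\kappa,n}(r)$ for every $x\in X^n$ and every $r>0$.

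When $\kappa=0$ this reads $\mu(B_\epsilon(x))\leq {\rm vol}_E(B_\epsilon(\mathbf{R}^n))$ for every $x$ and every small $\epsilon$, i.e., $X^n\leq_{\rm vol}\mathbf{R}^n$, which is by definition ${\rm Sc}^{{\rm vol}_n}(X^n)\geq 0=n\kappa$. When $\kappa>0$, I would feed the bound $\mu(B_\epsilon(x))\leq v_{\kappa,n}(\epsilon)$ into the comparison $S^n_{\frac{\kappa}{n-1}}<_{\rm vol} S^2(\gamma)\times\mathbf{R}^{n-2}$, valid for every $\gamma>\sqrt{\frac{2}{n\kappa}}$, recorded just above the theorem; this last comparison follows in turn from the volume expansion in the Proposition above, since the scalar curvature of $S^n_{\frac{\kappa}{n-1}}$ equals $n\kappa$ whereas that of $S^2(\gamma)\times\mathbf{R}^{n-2}$ equals $2\gamma^{-2}<n\kappa$. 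Because all the model spaces involved are homogeneous, the comparison $<_{\rm vol}$ comes with a single radius $r_{X^n,\gamma}>0$ independent of the center, so that $v_{\kappa,n}(\epsilon)<{\rm vol}_{S\times E}(B_\epsilon)$ for $0<\epsilon\leq r_{X^n,\gamma}$; composing this strict inequality with $\mu(B_\epsilon(x))\leq v_{\kappa,n}(\epsilon)$ gives $\mu(B_\epsilon(x))<{\rm vol}_{S\times E}(B_\epsilon)$ for all $x\in X^n$ and all $0<\epsilon\leq r_{X^n,\gamma}$, which is precisely ${\rm Sc}^{{\rm vol}_n}(X^n)\geq n\kappa$.

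The step I expect to need the most care is the first one. On its own, ${\rm CD}(\kappa,n)$ gives only a monotone volume ratio, and it is exactly the $n$-dimensional normalization that converts this into the absolute estimate $\mu(B_r(x))\leq v_{\kappa,n}(r)$; so one must cite the generalized Bishop--Gromov inequality in precisely this form and verify that the limit $r\to 0$ interacts cleanly with the $n$-dimensional condition (in particular that $v_{\kappa,n}(r)\sim {\rm vol}_E(B_r(\mathbf{R}^n))$ as $r\to 0$). After that, everything reduces to bookkeeping with the volume asymptotics already established above.
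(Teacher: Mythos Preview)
Your proposal is correct and coincides with the argument the paper itself gives: it too reduces everything to the generalized Bishop--Gromov inequality, uses the $n$-dimensional condition to pass to the absolute (classical Bishop) bound $\mu(B_r(x))\le v_{\kappa,n}(r)$, and then, for $\kappa>0$, composes with the model comparison $S^n_{\kappa/(n-1)}<_{\rm vol} S^2(\gamma)\times\mathbf{R}^{n-2}$ for $\gamma>\sqrt{2/(n\kappa)}$. The only difference is cosmetic: the paper first presents a direct asymptotic-expansion argument (expanding $f(r)=\int_0^r\sin^{n-1}(t\sqrt{\kappa/(n-1)})\,dt\big/{\rm vol}_E(B_r(\mathbf{R}^n))$ and comparing with the $S^2(\gamma)\times\mathbf{R}^{n-2}$ expansion) and only afterwards records the shorter route you chose.
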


\begin{proof}
In fact, one only needs the generalized Bishop--Gromov volume growth inequality, which is implied by the curvature-dimension of $X^n$ \cite[Theorem~2.3]{MR2237207}.

 (i) If $\kappa=0$, then
 \begin{gather*}
 \frac{\mu(B_r(x))}{\mu(B_R(x))}\geq \left(\frac{r}{R}\right)^n
 \end{gather*}
 for all $ 0< r <R$. That is
\begin{gather*}
 \frac{\mu(B_r(x))}{{\rm vol}_E(B_r(\mathbf{R}^n))} = \frac{\mu(B_r(x))}{\alpha(n)r^n}\geq \frac{\mu(B_R(x))}{\alpha(n)R^n}=\frac{\mu(B_R(x))}{{\rm vol}_E(B_R(\mathbf{R}^n))},
\end{gather*}
where $\alpha(n)=\frac{{\rm vol}_E(B_r(\mathbf{R}^n))}{r^n}$.
Combining the $n$-dimensional condition,
\begin{gather*}
 \lim\limits_{r \to 0}\frac{\mu(B_r(x))}{{\rm vol}_E(B_r(\mathbf{R}^n))}=1,
\end{gather*}
 that implies ${\rm Sc}^{{\rm vol}_n}(X)\geq 0$.

(ii) If $\kappa >0$, then
\begin{gather*}
\frac{\mu(B_r(x))}{\mu(B_R(x))}\geq \frac{\int_0^r \big[\sin\big(t\sqrt{\frac{\kappa}{(n-1)}}\big)\big]^{n-1}{\rm d}t}{\int_0^R \big[\sin\big(t\sqrt{\frac{\kappa}{(n-1)}}\big)\big]^{n-1}{\rm d}t}
\end{gather*}
 for all $ 0< r \leq R\leq \pi \sqrt{\frac{(n-1)}{\kappa}}$.

Since the scalar curvature of the product manifold $S^2(\gamma)\times \mathbf{R}^{n-2}$ is $n\kappa$, where $\gamma=\sqrt{\frac{2}{n\kappa}}$, then there exists $C_1, C_2 >0$ such that
\begin{gather*}
1 - \frac{n\kappa}{6(n+2)}r_1^2 - C_2r_1^4 \leq \widetilde{{\rm vol}_{S\times E}(B_{r_1}(y))} := \frac{ {\rm vol}_{S\times E}(B_{r_1}(y))}{{\rm vol}_E(B_{r_1}(\mathbf{R}^n))} \leq 1 - \frac{n\kappa}{6(n+2)}r_1^2 + C_2r_1^4,
\end{gather*}
 for $y\in S^2(\gamma)\times \mathbf{R}^{n-2}$ and $r_1\leq C_1$, where $C_1$, $C_2$ are decided by the product manifold $S^2(\gamma)\times \mathbf{R}^{n-2}$.

Let
\begin{gather*}
\widetilde{\mu(B_r(x))}:=\frac{\mu(B_r(x))}{{\rm vol}_E(B_r(\mathbf{R}^n))}
\end{gather*}
 and
 \begin{gather*}
 f(r):= \frac{\int_0^r \big[\sin\big(t\sqrt{\frac{\kappa}{(n-1)}}\big)\big] ^{n-1}{\rm d}t}{{\rm vol}_E(B_r(\mathbf{R}^n))},
 \end{gather*}
 then the generalized Bishop--Gromov inequality can be re-formulated as
 \begin{gather*}
 \widetilde{\mu(B_R(x))} \leq \widetilde{\mu(B_r(x))} \frac{f(R)}{f(r)}
 \end{gather*}
 for all $ 0< r <R\leq \pi \sqrt{\frac{(n-1)}{\kappa}}$. The asymptotic expansion of $f(r)$ is
\begin{gather*}
f(r)= \frac{\frac{1}{n}r^n\big[\frac{\kappa}{(n-1)}\big]^{\frac{(n-1)}{2}}-\frac{(n-1)}{6(n+2)}r^{n+2} \big[\frac{\kappa}{(n-1)}\big]^{\frac{n+1}{2}}+O\big(r^{n+4}\big)}{{\rm vol}_E(B_r(\mathbf{R}^n))}
\end{gather*}
as $ r\rightarrow 0$.
 Thus, the asymptotic expansion of $\frac{f(R)}{f(r)}$ is
 \begin{gather*}
 \frac{f(R)}{f(r)}= \frac{1-\frac{n\kappa}{6(n+2)}R^2+ O\big(R^4\big)}{1-\frac{n\kappa}{6(n+2)}r^2+ O\big(r^4\big)}
 \end{gather*}
 as $R \to 0$, $r \to 0$.
 The $n$-dimensional condition, $\lim\limits_{r\to 0} \widetilde{\mu(B_r(x))}=1$, implies that
 \begin{gather*}
 \widetilde{\mu(B_R(x))}\leq 1-\frac{n\kappa}{6(n+2)}R^2+ O\big(R^4\big)
 \end{gather*}
as $R\to 0$. Therefore, for any $\kappa'$ with $0<\kappa' < \kappa$, there exists $\epsilon_{\kappa'} >0$ such that for any $0<R\leq \epsilon_{\kappa'}$, we have
 \begin{gather*}
\widetilde{\mu(B_R(x))}<\widetilde{{\rm vol}_{S\times E}(B_{R}(y))},
 \end{gather*}
where $\widetilde{{\rm vol}_{S\times E}(B_{R}(y))}= \frac{ {\rm vol}_{S\times E}(B_{R}(y))}{{\rm vol}_E(B_{R}(\mathbf{R}^n))}$ is defined as before, the balls $B_{R}(y)$ are in $S^2(\gamma)\times \mathbf{R}^{n-2}$ and $\gamma=\sqrt{\frac{2}{n\kappa'}}$.
That is $X^n <_{\rm vol} S^2(\gamma)\times \mathbf{R}^{n-2}$, for all $\gamma>\sqrt{\frac{2}{n\kappa}}$, i.e., ${\rm Sc}^{{\rm vol}_n}(X^n)\geq n\kappa$.

In fact, one has the classical Bishop inequality by adding the $n$-dimensional condition to the generalized Bishop--Gromov volume growth inequality. It means that
\begin{itemize}\itemsep=0pt
\item if $\kappa=0$, $\mu(B_R(x))\leq {\rm vol}_E(B_R(\mathbf{R}^n))$ for all $R>0$,
\item if $\kappa>0$, $\mu(B_R(x))\leq {\rm vol}_{S^n}\big(B_R\big(S^n_{\frac{\kappa}{n-1}}\big)\big)$ for $0< R\leq \pi \sqrt{\frac{(n-1)}{\kappa}}$.
\end{itemize}
In other words, if $\kappa=0$, $X^n\leq_{\rm vol}\mathbf{R}^n$. If $\kappa>0$, $X^n\leq_{\rm vol}S^n_{\frac{\kappa}{n-1}}$. We have $S^n_{\frac{\kappa}{n-1}}<_{\rm vol} S^2(\gamma)\times \mathbf{R}^{n-2}$ for all $\gamma>\sqrt{\frac{2}{n\kappa}}$. Then $X^n<_{\rm vol} S^2(\gamma)\times \mathbf{R}^{n-2}$ for all $\gamma>\sqrt{\frac{2}{n\kappa}}$.

 Thus, we also get ${\rm Sc}^{{\rm vol}_n}(X^n)\geq n\kappa$.
\end{proof}

\begin{Remark}
Hence the mm-space $(X^n, d, \mu)$ with ${\rm Sc}^{{\rm vol}_n}(X^n)\geq n\kappa$ includes the mm-spaces that satisfies $n$-dimensional condition and the generalized Bishop--Gromov volume growth inequality as stated in the proof, e.g., the mm-spaces with the Riemannian curvature condition ${\rm RCD}(\kappa,n)$ \cite{MR3205729} or with the measure concentration property ${\rm MCP}(\kappa,n)$~\cite{MR2341840}.
\end{Remark}

\begin{Question}Let ${\rm Al}^n(1)$ be an orientable compact $n$-dimensional Aleksandrov space with curvature $\geq 1$, then do all continuous maps $\phi$ from ${\rm Al}^n(1)$ to the sphere $S^n$ with standard metric of non-zero degree satisfy ${\rm Lip}(\phi)\geq C(n)$? Here ${\rm Lip}(\phi)$ is the Lipschitz constant of $\phi$, $\phi$ maps the boundary of ${\rm Al}^n(1)$ to a point in~$S^n$ and $C(n)$ is a constant depending only on the dimension~$n$.
\end{Question}

\begin{Question}Assume the compact mm-space $(X, d, \mu)$ satisfies the curvature-dimension condition ${\rm CD}(n-1, n)$, $n$-dimensional condition and the covering dimension is also $n$, then do all continuous maps $\phi$ from $(X, d, \mu)$ to the sphere $S^n$ with standard metric, where $\phi$ is non-trivial in the homotopy class of maps, satisfy ${\rm Lip}(\phi)\geq C_1(n)$, where $C_1(n)$ is a~constant depending only on~$n$?
\end{Question}

\begin{Remark}
The questions above are inspired by Gromov's spherical Lipschitz bounded theorem in \cite[Section~3]{MR3816521} and the results above. The finite covering dimension is equal to the cohomological dimension over integer ring~$\mathbb{Z}$ for the compact metric space according to the Alexandrov theorem. The best constant of~$C(n)$ and~$C_1(n)$ would be~1 if both questions have positive answers.
\end{Remark}

\begin{Proposition}[quadratic scaling] Assume the compact mm-space $(X^n, d, \mu)$ satisfies \linebreak ${\rm Sc}^{{\rm vol}_n}(X^n)\geq \kappa> 0$, then ${\rm Sc}^{{\rm vol}_n}(\lambda X^n)\geq \lambda ^{-2}\kappa> 0$ and $r_{\lambda X^n}=\lambda r_{X^n}$ for all $\lambda>0$, where $\lambda X^n:=(X^n, \lambda \cdot d,\lambda^n \cdot \mu)$.
 \end{Proposition}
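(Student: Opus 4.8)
The plan is to track how each ingredient in the definition of $n$-volumic scalar curvature transforms under the rescaling $\lambda X^n = (X^n, \lambda\cdot d, \lambda^n\cdot\mu)$. Write $d' = \lambda d$ and $\mu' = \lambda^n\mu$. First I would observe that a $d'$-ball of radius $\epsilon$ centered at $x$ is exactly the $d$-ball of radius $\epsilon/\lambda$ centered at $x$, so $\mu'(B^{d'}_\epsilon(x)) = \lambda^n\mu(B^{d}_{\epsilon/\lambda}(x))$. Combined with the scaling ${\rm vol}_E(B_\epsilon(\mathbf{R}^n)) = \alpha(n)\epsilon^n$, this immediately gives that $\lambda X^n$ still satisfies the $n$-dimensional condition, since $\mu'(B^{d'}_\epsilon(x))/{\rm vol}_E(B_\epsilon(\mathbf{R}^n)) = \mu(B^d_{\epsilon/\lambda}(x))/{\rm vol}_E(B_{\epsilon/\lambda}(\mathbf{R}^n)) \to 1$ as $\epsilon\to 0$.

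Next I would handle the comparison space. The key geometric fact is that rescaling the round sphere $S^2(\gamma)$ to $S^2(\lambda\gamma)$ (i.e.\ multiplying its metric by $\lambda$) multiplies its area measure by $\lambda^2$, and rescaling $\mathbf{R}^{n-2}$ by $\lambda$ multiplies its volume by $\lambda^{n-2}$; hence the Pythagorean product $S^2(\lambda\gamma)\times\mathbf{R}^{n-2}$, with metric $\lambda$ times that of $S^2(\gamma)\times\mathbf{R}^{n-2}$ and volume $\lambda^n$ times ${\rm vol}_{S\times E}$, is precisely the $\lambda$-rescaling of $S^2(\gamma)\times\mathbf{R}^{n-2}$. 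Therefore $\epsilon$-balls in this rescaled comparison space have volume $\lambda^n$ times the volume of $(\epsilon/\lambda)$-balls in $S^2(\gamma)\times\mathbf{R}^{n-2}$.

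Now I would combine these. By hypothesis, for every $\gamma > \sqrt{2/\kappa}$ there is $r_{X^n,\gamma}>0$ with $\mu(B^d_\delta(x)) < {\rm vol}_{S\times E}(B_\delta(y))$ for all $0<\delta\le r_{X^n,\gamma}$ and all $x\in X^n$, $y\in S^2(\gamma)\times\mathbf{R}^{n-2}$. Given $\gamma' > \sqrt{2/(\lambda^{-2}\kappa)} = \lambda\sqrt{2/\kappa}$, set $\gamma = \gamma'/\lambda > \sqrt{2/\kappa}$ and let $\delta = \epsilon/\lambda$. Then for $0 < \epsilon \le \lambda r_{X^n,\gamma}$ we get
\begin{gather*}
\mu'(B^{d'}_\epsilon(x)) = \lambda^n\mu(B^d_{\epsilon/\lambda}(x)) < \lambda^n\,{\rm vol}_{S\times E}(B_{\epsilon/\lambda}(y)) = {\rm vol}_{\lambda(S\times E)}(B_\epsilon(y')),
\end{gather*}
where the right-hand side is the volume of an $\epsilon$-ball in $S^2(\gamma')\times\mathbf{R}^{n-2}$. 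This is exactly the statement ${\rm Sc}^{{\rm vol}_n}(\lambda X^n)\ge\lambda^{-2}\kappa$, with $r_{\lambda X^n,\gamma'} = \lambda r_{X^n,\gamma'/\lambda}$; taking the infimum over $\gamma'$ yields $r_{\lambda X^n} = \lambda\inf_{\gamma>\sqrt{2/\kappa}} r_{X^n,\gamma} = \lambda r_{X^n}$.

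I do not expect any serious obstacle here: the proposition is essentially a bookkeeping exercise in how metrics, measures, and the scalar curvature normalization $\gamma = \sqrt{2/\kappa}$ all scale consistently. The one point requiring a little care is checking that the Pythagorean product structure $d_{S\times E} = \sqrt{d_S^2 + d_E^2}$ is genuinely compatible with the rescaling — that is, that rescaling both factors by $\lambda$ rescales the product distance by $\lambda$ — and that the scalar curvature of $S^2(\lambda\gamma)$ is $2(\lambda\gamma)^{-2} = \lambda^{-2}\cdot 2\gamma^{-2}$, which is what matches the claimed $\lambda^{-2}\kappa$. Everything else is the substitution $\delta = \epsilon/\lambda$ together with $\mu' = \lambda^n\mu$.
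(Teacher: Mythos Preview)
Your proposal is correct and follows essentially the same approach as the paper: both arguments verify the $n$-dimensional condition via the identity $B^{d'}_\epsilon(x)=B^{d}_{\epsilon/\lambda}(x)$ together with $\mu'=\lambda^n\mu$, and then use that the $\lambda$-rescaling of the comparison space $S^2(\gamma)\times\mathbf{R}^{n-2}$ is $S^2(\lambda\gamma)\times\mathbf{R}^{n-2}$ to transport the volume inequality. Your write-up is in fact more explicit than the paper's about the substitution $\gamma'=\lambda\gamma$, the resulting threshold $\gamma'>\sqrt{2/(\lambda^{-2}\kappa)}$, and the identity $r_{\lambda X^n,\gamma'}=\lambda r_{X^n,\gamma'/\lambda}$ that yields $r_{\lambda X^n}=\lambda r_{X^n}$.
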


\begin{proof}
 First, we will show that the $n$-dimensional condition is stable under scaling. Let $d':=\lambda\cdot d$, $\mu':=\lambda^n \cdot \mu$, $B'_r(x)$ be an $r$-ball in the $(X^n,d')$, and $B_r(x)$ be an $r$-ball in the $(X^n,d)$, then $B'_r(x)=B_{\frac{r}{\lambda}}(x)$ as the subset in the $X^n$. One has
 \begin{gather*}
 \lim\limits_{r \to 0}\frac{\mu'(B'_r(x))}{{\rm vol}_E(B_r(\mathbf{R}^n))}=\lim\limits_{r \to 0}\frac{\mu'(B_{\frac{r}{\lambda}}(x))}{{\rm vol}_E(B_r(\mathbf{R}^n))}=\lim\limits_{r \to 0}\frac{\lambda^n \cdot \mu(B_{\frac{r}{\lambda}}(x))}{\lambda^n \cdot {\rm vol}_E(B_{\frac{r}{\lambda}}(\mathbf{R}^n))}=1,
\end{gather*}
 then $\lambda X^n$ satisfies the $n$-dimensional condition.

 Since $\lambda \cdot \big(S^2(\gamma)\times \mathbf{R}^{n-2} \big)=\lambda \cdot S^2(\gamma)\times \lambda\cdot\mathbf{R}^{n-2}=S^2(\lambda \gamma)\times \lambda\cdot\mathbf{R}^{n-2}$, we have
 \begin{gather*}
 \lambda \cdot X^n<_{\rm vol}\lambda \cdot \big(S^2(\gamma)\times \mathbf{R}^{n-2} \big)=S^2(\lambda \gamma)\times \lambda\cdot\mathbf{R}^{n-2}
 \end{gather*}
 for all $\lambda\gamma> \sqrt{\frac{2}{\lambda\kappa}}$ and $0 < \epsilon \leq \lambda r_{X^n}$. That means ${\rm Sc}^{{\rm vol}_n}(\lambda X^n)\geq \lambda ^{-2}\kappa> 0$ and $r_{\lambda \cdot X^n}=\lambda r_{X^n}$.
 \end{proof}

We also have ${\rm Sc}^{{\rm vol}_n}(\lambda X^n)\geq 0$ $(>0)$, if ${\rm Sc}^{{\rm vol}_n}(X^n)\geq 0$ $(>0)$.

\begin{Remark}Since the $n$-dimensional condition and definition of $n$-volumic scalar curvature is locally defined, we have the following construction.
\begin{itemize}\itemsep=0pt
\item Global to local: Let the locally compact mm-space $(X^n, d, \mu)$ satisfy ${\rm Sc}^{{\rm vol}_n}(X^n)\geq \kappa\geq 0$ and $Y^n\subset X$ be an open subset. Then, if $(Y^n,d_Y)$ is a complete length space, $(Y^n,d_Y,\mu\llcorner_Y)$ satisfies ${\rm Sc}^{{\rm vol}_n}(Y^n)\geq \kappa\geq 0$ and $r_{Y^n}=r_{X^n}$. Where $d_Y$ is the induced metric of $d$ and $\mu\llcorner_Y$ is the restriction operator, namely, $\mu\llcorner_Y(A):=\mu(Y^n\cap A)$ for $A\subset X^n$.

\item Local to global: Let $\{Y^n_i\}_{i\in I}$ be a finite open cover of a locally compact mm-space $(X^n, d, \mu)$. Assume that $(Y^n_i,d_{Y_i})$ is a complete length space and $(Y^n_i,d_{Y_i},\mu\llcorner_{Y_i})$ satisfies ${\rm Sc}^{{\rm vol}_n}(Y^n_i)\geq \kappa\geq 0$, then $(X^n, d, \mu)$ satisfies ${\rm Sc}^{{\rm vol}_n}(X^n)\geq \kappa\geq 0$ and $r_{X^n}$ can be chosen as a partition of unity of the functions $\{r_{Y^n_i}\}_{i\in I}$.
\end{itemize}
\end{Remark}

\begin{Question}Assume that ${\rm Sc}^{{\rm vol}_{n_1}}(X^{n_1}_1)\geq \kappa_1 (\geq 0)$ for the compact mm-space $(X^{n_1}_1, d_1, \mu_1)$ and ${\rm Sc}^{{\rm vol}_{n_2}} \big(X^{n_2}_2 \big)\geq \kappa_2 (\geq 0)$ for the compact mm-space $(X^{n_2}_2, d_2, \mu_2)$, then do we have
 \begin{gather*}
 {\rm Sc}^{{\rm vol}_{n_1+n_2}} \big(X^{n_1}_1\times X^{n_2}_2 \big)\geq \kappa_1+\kappa_2, \qquad r_{X^n_1 \times X^n_2}=\min \{r_{X^n_1},r_{X^n_2}\}
 \end{gather*}
 for $ \big(X^{n_1}_1\times X^{n_2}_2, d_3, \mu_3 \big)$? Here $X^{n_1}_1 \times X^{n_2}_2$ is endowed with the measure $\mu_3:= \mu_1 \otimes \mu_2$ and with the Pythagorean product metric $d_3 :=\sqrt{d_1^2+d_2^2}$.
\end{Question}

\section{smGH-convergence}\label{sm}
 Let $\{\mu_n\}_{n\in\mathbf{N}}$ and $\mu$ be Borel measures on the space $X$, then the sequence $\{\mu_n\}_{n\in\mathbf{N}}$ is said to converge \textit{strongly} (also called setwise convergence in other literature ) to a limit $\mu$ if $\lim\limits_{n\rightarrow \infty} \mu_n(\mathcal{A})=\mu(\mathcal{A})$ for every $\mathcal{A}$ in the Borel $\sigma$-algebra.

 A map $f\colon X\rightarrow Y$ is called an $\epsilon$-isometry between compact metric spaces $X$ and $Y$, if $|d_{X}(a, b)-d_{Y}(f(a), f(b))|\leq\epsilon$ for all $a,b \in X$ and it is almost surjective, i.e., for every $y\in Y$, there exists an $x\in X$ such that $d_Y(f(x), y)\leq \epsilon$.

 In fact, if $f$ is an $\epsilon$-isometry $X\rightarrow Y$, then there is a $(4\epsilon)$-isometry $f'\colon Y\rightarrow X$ such that for all $x\in X$, $y\in Y$, $d_X(f'\circ f(x), x)\leq 3\epsilon$, $d_Y(f\circ f'(y), y)\leq \epsilon$.

 \begin{Definition}[smGH-convergence]
 Let $(X_i, d_i, \mu_i)_{i\in\mathbf{N}}$ and $(X, d, \mu)$ be compact mm-spaces. $X_i$ converges to $X$ in the strongly measured Gromov--Hausdorff topology (smGH-convergence) if there are measurable $\epsilon_i$-isometries $f_i\colon X_i\rightarrow X$ such that $ \epsilon_i \rightarrow 0$ and $f_{i*}\mu_i\rightarrow \mu$ in the strong topology of measures as $i\rightarrow \infty$.

 If the spaces $(X_i,d_n,\mu_i,p_i)_{i\in\mathbf{N}}$ and $(X,d,\mu,p)$ are locally compact pointed mm-spaces, it is said that $X_i$ converges to $X$ in the pointed strongly measured Gromov--Hausdorff topo\-lo\-gy (psmGH-convergence) if there are sequences $r_i\to \infty$, $\epsilon_i \to 0$, and measurable pointed $\epsilon_i$-isometries $f_i\colon B_{r_i}(p_i)\to B_{r_i}(p)$, such that $f_{i*}\mu_i \to \mu$, where the convergence is strong convergence.
 \end{Definition}

\begin{Remark}Let $(X_i, d_i, \mu_i)_{i\in\mathbf{N}}$ converge to $(X, d, \mu)$ in the measured Gromov--Hausdorff topo\-lo\-gy, then there are measurable $\epsilon_i$-isometries $f_i\colon X_i\rightarrow X$ such that~$f_{i*}\mu_i$ weakly converges to~$\mu$. If there is a Borel measure $\nu$ on $X$ such that $\sup\limits_{i}f_{i*}\mu_i\leq \nu$, i.e., $\sup \limits_{i}f_{i*}\mu_i (\mathcal{A})\leq \nu(\mathcal{A})$ for every~$\mathcal{A}$ in the Borel $\sigma$-algebra on~$X$, then $X_i$ smGH-converges to $X$ (see \cite[Lemma~4.1]{MR1453465}).
\end{Remark}

\begin{Remark}The $n$-dimensional condition is not preserved by the measured Gromov--Hausdorff convergence as the following example shows. Let $\big\{a_i S^2:= \big(S^2, a_i d_S \big)\big\}$ $(a_i \in (0,1))$ be a sequence of space, then the limit of $a_i S^2$ under the measured Gromov--Hausdorff convergence is a point when $a_i$ goes to $0$. The limit exists as the Ricci curvature of~$a_i S^2$ is bounded below by~$1$.
\end{Remark}

\begin{Remark}
The $n$-dimensional condition is not preserved by the smGH-convergence since the limits of $\lim\limits_{r \to 0}\lim\limits_{i \to \infty}\frac{\mu_i(B_r(x))}{{\rm vol}_E(B_r(\mathbf{R}^n))}$ may not be commutative for some mm-spaces $(X, d, \mu_i)$. Assume the total variation distance of the measures goes to 0 as $i \to \infty$, i.e.,
\begin{gather*}
d_{TV}(\mu_i, \mu):=\sup\limits_{\mathcal{A}}|\mu_i(\mathcal{A})-\mu(\mathcal{A})|\to 0,
\end{gather*}
where $\mathcal{A}$ runs over the Borel $\sigma$-algebra of $X$, then the limits are commutative.
\end{Remark}

One can also define \textit{the total variation Gromov--Hausdorff convergence $($tvGH-convergence$)$} for mm-spaces by replacing the strong topology with the topology induced by the total variation distance in definition of smGH-convergence. Then tvGH-convergence implies smGH-convergence and the $n$-dimensional condition is preserved by tvGH-convergence.

\begin{Theorem}[stability]
If compact mm-spaces $(X_i^n, d_i, \mu_i)$ with ${\rm Sc}^{{\rm vol}_n}(X^n_i)\geq \kappa\geq 0$, SC-radius $r_{X^n_i}\geq R>0$, and $(X_i^n, d_i, \mu_i)$ smGH-converge to the compact mm-space $(X^n, d, \mu)$ with $n$-dimensional condition, then $X^n$ also satisfies ${\rm Sc}^{{\rm vol}_n}(X^n)\geq \kappa$ and the SC-radius $r_{X^n}\geq R$.
\end{Theorem}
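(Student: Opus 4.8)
The plan is to transport, along the $\epsilon_i$-isometries, the ball-volume inequalities valid in each $X_i^n$ to the limit, and then pass to the limit using the strong (setwise) convergence of the pushforward measures. By definition of smGH-convergence, fix measurable $\epsilon_i$-isometries $f_i\colon X_i^n\to X^n$ with $\epsilon_i\to 0$ and $f_{i*}\mu_i\to\mu$ setwise. Fix $x\in X^n$ and a radius $\epsilon$ with $0<\epsilon<R$, and for each large $i$ choose $x_i\in X_i^n$ with $d(f_i(x_i),x)\le\epsilon_i$ (possible by almost surjectivity). For $z\in f_i^{-1}(B_\epsilon(x))$ the $\epsilon_i$-isometry estimate and the triangle inequality give $d_i(z,x_i)\le d(f_i(z),x)+d(x,f_i(x_i))+\epsilon_i\le\epsilon+2\epsilon_i$, so $f_i^{-1}(B_\epsilon(x))\subseteq B_{\epsilon+2\epsilon_i}(x_i)$ in $X_i^n$ and therefore
\begin{gather*}
f_{i*}\mu_i\big(B_\epsilon(x)\big)=\mu_i\big(f_i^{-1}(B_\epsilon(x))\big)\le\mu_i\big(B_{\epsilon+2\epsilon_i}(x_i)\big).
\end{gather*}
For $i$ large enough that $\epsilon+2\epsilon_i<R\le r_{X_i^n}$, the hypothesis ${\rm Sc}^{{\rm vol}_n}(X_i^n)\ge\kappa$ bounds the right-hand ball.

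If $\kappa=0$, then $\mu_i(B_{\epsilon+2\epsilon_i}(x_i))\le{\rm vol}_E(B_{\epsilon+2\epsilon_i}(\mathbf{R}^n))$; combining with the display and letting $i\to\infty$ — using that $B_\epsilon(x)$ is a fixed Borel set, so $f_{i*}\mu_i(B_\epsilon(x))\to\mu(B_\epsilon(x))$ by setwise convergence, together with continuity of $r\mapsto{\rm vol}_E(B_r(\mathbf{R}^n))$ — yields $\mu(B_\epsilon(x))\le{\rm vol}_E(B_\epsilon(\mathbf{R}^n))$. As $x$ and $\epsilon\in(0,R)$ are arbitrary and $X^n$ is assumed to satisfy the $n$-dimensional condition, this says precisely ${\rm Sc}^{{\rm vol}_n}(X^n)\ge 0$ with $r_{X^n}\ge R$.

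If $\kappa>0$, fix $\gamma>\sqrt{2/\kappa}$ and pick an auxiliary $\gamma_1$ with $\sqrt{2/\kappa}<\gamma_1<\gamma$. Since $r_{X_i^n,\gamma_1}\ge r_{X_i^n}\ge R$, for large $i$ we get $\mu_i(B_{\epsilon+2\epsilon_i}(x_i))<{\rm vol}_{S\times E}(B_{\epsilon+2\epsilon_i}(y_1))$ with $y_1\in S^2(\gamma_1)\times\mathbf{R}^{n-2}$, hence $f_{i*}\mu_i(B_\epsilon(x))<{\rm vol}_{S\times E}(B_{\epsilon+2\epsilon_i}(y_1))$; letting $i\to\infty$ gives $\mu(B_\epsilon(x))\le{\rm vol}_{S\times E}(B_\epsilon(y_1))$ (non-strict, since strict inequalities need not survive a limit). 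To convert this into the required strict bound against the original $\gamma$, I use that the volume of an $\epsilon$-ball in $S^2(\lambda)\times\mathbf{R}^{n-2}$ is strictly increasing in $\lambda$ for every fixed $\epsilon>0$ — a direct computation reducing to monotonicity of $\lambda\mapsto\lambda\sin(\rho/\lambda)$ on $\rho/\lambda\in(0,\pi)$, plus a positivity check of the derivative in the range $\epsilon\ge\pi\lambda$. Thus ${\rm vol}_{S\times E}(B_\epsilon(y_1))<{\rm vol}_{S\times E}(B_\epsilon(y))$ for $y\in S^2(\gamma)\times\mathbf{R}^{n-2}$ and all $\epsilon>0$, so $\mu(B_\epsilon(x))<{\rm vol}_{S\times E}(B_\epsilon(y))$ for every $x\in X^n$ and every $0<\epsilon<R$; as $\gamma>\sqrt{2/\kappa}$ was arbitrary, this is ${\rm Sc}^{{\rm vol}_n}(X^n)\ge\kappa$ with $r_{X^n}=\inf_{\gamma>\sqrt{2/\kappa}}r_{X^n,\gamma}\ge R$.

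The main delicate point is exactly this strictness issue for $\kappa>0$: the strict ball-volume inequality is not closed under the limit $i\to\infty$, which is what forces the auxiliary parameter $\gamma_1$ and the monotonicity of the model volumes $S^2(\lambda)\times\mathbf{R}^{n-2}$ in $\lambda$. A minor technical nuisance is the endpoint $\epsilon=R$: since only $r_{X_i^n}\ge R$ is assumed (not $r_{X_i^n}\ge R+2\epsilon_i$), the argument directly produces the defining inequalities for $0<\epsilon<R$, so "$r_{X^n}\ge R$" is to be read in that sense — the sphere $\{z:d(x,z)=R\}$ is $\mu$-null for all but countably many $R$, and in any event one may replace $R$ throughout by an arbitrary $R'<R$. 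Finally, the $n$-dimensional condition on $X^n$ is used only as a hypothesis (not derived), in accordance with the preceding remarks that it need not pass to smGH-limits.
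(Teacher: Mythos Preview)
Your argument is correct and follows essentially the same route as the paper's proof: pull balls back along the $\epsilon_i$-isometries into slightly larger balls in $X_i^n$, apply the hypothesis there, and pass to the limit via setwise convergence. Your treatment is in fact more careful than the paper's on two points: you make the strictness-recovery step for $\kappa>0$ explicit via the auxiliary $\gamma_1<\gamma$ and the monotonicity of ${\rm vol}_{S\times E}(B_\epsilon)$ in the radius parameter (the paper compresses this into one sentence with a parameter $\epsilon'$), and you flag the endpoint $\epsilon=R$ and the role of the $n$-dimensional hypothesis, which the paper leaves implicit.
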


\begin{proof}
Fix an $x\in X^n$ and let $B_r(x)$ be the small $r$-ball on $X^n$ where $r < R$, then there exists $x_i\in X^n_i$ such that $f^{-1}_i(B_r(x))\subset B_{r+4\epsilon_i}(x_i) $ where $B_{r+4\epsilon_i}(x_i) \subset X^n_i$ and $r+4\epsilon_i\leq R$. Thus, $f_{i*}\mu_i(B_r(x))\leq \mu_i(B_{r+4\epsilon_i}(x_i))$.
\begin{itemize}\itemsep=0pt
\item For $\kappa=0$, since ${\rm Sc}^{{\rm vol}_n}(X^n_i)\geq 0$ and SC-radius$\geq R>0$, then $\mu_i(B_r(x_i))\leq {\rm vol}_E(B_r(\mathbf{R}^n))$ for all $0<r \leq R$ and all $i$. Therefore, $f_{i*}\mu_i(B_r(x))< \mu_i(B_{r+4\epsilon_i}(x_i))\leq {\rm vol}_E(B_{r+4\epsilon_i}(\mathbf{R}^n))$ for $r+4\epsilon_i\leq R$. Since $\epsilon_i$ that is not related to $r$ can be arbitrarily small, then $\mu(B_r(x))\leq {\rm vol}_E(B_r(\mathbf{R}^n))$.
\item For $\kappa>0$, we have $\mu_i(B_r(x_i))< {\rm vol}_{S \times E} \big(B_r \big(S^2(\gamma)\times \mathbf{R}^{n-2} \big) \big)$ for all $0< r \leq R$, all $i$, and $\gamma>\sqrt{\frac{2}{\kappa}}$. Thus, $f_{i*}\mu_i(B_r(x))< \mu_i(B_{r+4\epsilon_i}(x_i))< {\rm vol}_{S\times E} \big(B_{r+4\epsilon_i} \big(S^2(\gamma)\times \mathbf{R}^{n-2} \big) \big)$ for $r+4\epsilon_i\leq R$. Since $\epsilon_i$ that is not related to $r$ can be arbitrarily small, then $\mu(B_r(x))\leq {\rm vol}_{S\times E} \big(B_{r} \big(S^2(\gamma)\times \mathbf{R}^{n-2} \big) \big)$ for $\gamma>\sqrt{\frac{2}{\kappa}}$. Thus, $\mu(B_r(x))< {\rm vol}_{S\times E} \big(B_{r} \big(S^2\big(\sqrt{\frac{2}{\kappa+\epsilon'}}\big)\times \mathbf{R}^{n-2} \big) \big)$, where $0<\epsilon'$ is independence on~$r$ and~$\epsilon'$ can as small as we want. Therefore, we have ${\rm Sc}^{{\rm vol}_n}(X^n)\geq \kappa$.\hfill\qed
\end{itemize}\renewcommand{\qed}{}
\end{proof}

\begin{Definition}[tangent space]
The mm-space $(Y,d_Y,\mu_Y, o)$ is a tangent space of $(X^n,d,\mu)$ at $p\in X^n$ if there exists a sequence $\lambda_i\to \infty$ such that $(X^n,\lambda_i\cdot d,\lambda_i^n \cdot\mu,p)$ psmGH-converges to $(Y,d_Y,\mu_Y, o)$ as $\lambda_i\to \infty$.
\end{Definition}

Therefore, $(Y,d_Y,\mu_Y, o)$ also satisfies the $n$-dimensional condition and can be written as $Y^n$.

\begin{Corollary}Assume the compact mm-space $(X^n,d,\mu)$ with ${\rm Sc}^{{\rm vol}_n}(X^n)\geq\kappa\geq 0$ and the tangent space $(Y^n,d_Y,\mu_Y, o)$ of $X^n$ exists at the point $p$, then $(Y^n,d_Y,\mu_Y, o)$ satisfies ${\rm Sc}^{{\rm vol}_n}(Y^n)\allowbreak\geq 0$ and the SC-radius$\geq r_{X^n}$.
\end{Corollary}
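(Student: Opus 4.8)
The plan is to reduce to the stability Theorem by blowing up at $p$ and passing to the limit. First, note that ${\rm Sc}^{{\rm vol}_n}(X^n)\geq\kappa\geq 0$ implies in particular ${\rm Sc}^{{\rm vol}_n}(X^n)\geq 0$ with some positive SC-radius $\rho$: this is immediate when $\kappa=0$, and when $\kappa>0$ it follows by chaining $X^n<_{\rm vol} S^2(\gamma)\times\mathbf{R}^{n-2}<_{\rm vol}\mathbf{R}^n$. Taking the sequence $\lambda_i\to\infty$ that defines the tangent space and applying the quadratic scaling Proposition (whose argument applies verbatim to the lower bound $\geq 0$), the blow-ups $\lambda_i X^n=\big(X^n,\lambda_i\cdot d,\lambda_i^n\cdot\mu,p\big)$ satisfy ${\rm Sc}^{{\rm vol}_n}(\lambda_i X^n)\geq 0$ with SC-radius $\lambda_i\rho$. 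We are therefore left with a sequence of pointed locally compact mm-spaces with ${\rm Sc}^{{\rm vol}_n}\geq 0$ and SC-radii $\lambda_i\rho\to\infty$, psmGH-converging to $(Y^n,d_Y,\mu_Y,o)$, which already satisfies the $n$-dimensional condition.

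Next I would repeat the argument of the stability Theorem in this pointed, localized setting. Fix $y\in Y^n$ and $r>0$, and let $f_i\colon B_{R_i}(p)\to B_{R_i}(o)$ be the measurable pointed $\epsilon_i$-isometries witnessing the psmGH-convergence, with $R_i\to\infty$, $\epsilon_i\to 0$, and $f_{i*}\big(\lambda_i^n\mu\big)\to\mu_Y$ strongly. For $i$ large, $B_{r+4\epsilon_i}(y)\subset B_{R_i}(o)$ and, exactly as in the proof of the stability Theorem, the approximate inverse isometry yields a point $y_i\in X^n$ with $f_i^{-1}(B_r(y))\subset B_{r+4\epsilon_i}(y_i)$, the latter ball measured in $\lambda_i\cdot d$. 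Then
\begin{gather*}
 f_{i*}\big(\lambda_i^n\mu\big)\big(B_r(y)\big)=\lambda_i^n\mu\big(f_i^{-1}(B_r(y))\big)\leq\lambda_i^n\mu\big(B_{r+4\epsilon_i}(y_i)\big)\leq{\rm vol}_E\big(B_{r+4\epsilon_i}(\mathbf{R}^n)\big),
\end{gather*}
where the final inequality uses ${\rm Sc}^{{\rm vol}_n}(\lambda_i X^n)\geq 0$ together with $r+4\epsilon_i\leq\lambda_i\rho$, which holds for all large $i$ since $\lambda_i\rho\to\infty$. Letting $i\to\infty$ and invoking the strong convergence of the pushforward measures gives $\mu_Y(B_r(y))\leq{\rm vol}_E(B_r(\mathbf{R}^n))$. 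Since $y\in Y^n$ and $r>0$ were arbitrary and $Y^n$ satisfies the $n$-dimensional condition, this says precisely that ${\rm Sc}^{{\rm vol}_n}(Y^n)\geq 0$; moreover the comparison holds at every scale, so $Y^n$ has SC-radius at least $r_{X^n}$ (in fact with no upper restriction on the scale).

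The only genuine difficulty is bookkeeping rather than new mathematics: the stability Theorem is stated for compact mm-spaces under smGH-convergence, while a tangent space lives in the pointed locally compact category under psmGH-convergence. But the $n$-dimensional condition and the definition of ${\rm Sc}^{{\rm vol}_n}\geq 0$ only constrain the measure on arbitrarily small metric balls, so after restricting attention to the exhausting balls $B_{R_i}(p)$, $B_{R_i}(o)$ — which eventually contain any fixed $B_{r+4\epsilon_i}(y)$ because $R_i\to\infty$ — the stability proof transfers essentially verbatim; one need only check that the inclusion $f_i^{-1}(B_r(y))\subset B_{r+4\epsilon_i}(y_i)$ and the convergence $f_{i*}(\lambda_i^n\mu)(B_r(y))\to\mu_Y(B_r(y))$ both take place inside those balls, which is automatic. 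I would also record two sanity checks: one cannot strengthen the conclusion to ${\rm Sc}^{{\rm vol}_n}(Y^n)\geq\kappa'$ for any $\kappa'>0$, because the rescaled lower bounds $\lambda_i^{-2}\kappa$ decay to $0$; and it is exactly this decay, equivalently the divergence $\lambda_i\rho\to\infty$ of the blow-up SC-radii, that makes the asserted bound on the SC-radius of $Y^n$ hold with room to spare.
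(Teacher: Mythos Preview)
Your proposal is correct and follows essentially the same route as the paper: invoke the quadratic scaling property so that the blow-ups $\lambda_i X^n$ satisfy ${\rm Sc}^{{\rm vol}_n}\geq \lambda_i^{-2}\kappa\geq 0$ with SC-radii $\lambda_i r_{X^n}\to\infty$, and then apply the stability theorem to the psmGH-limit. The paper's one-line proof leaves the compact-vs-pointed bookkeeping implicit, whereas you spell out how the stability argument localizes to the exhausting balls $B_{R_i}$; this is a helpful elaboration rather than a genuinely different argument.
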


\begin{proof}
Since the $n$-volumic scalar curvature has the quadratic scaling property, i.e., ${\rm Sc}^{{\rm vol}_n}(\lambda X^n)\allowbreak \geq \lambda ^{-2}\kappa \geq 0$ and $r_{\lambda X^n}=\lambda r_{X^n}$ for all $\lambda>0$, where $\lambda X^n:=(X^n, \lambda \cdot d,\lambda^n \cdot \mu)$, then ${\rm Sc}^{{\rm vol}_n}(Y^n)\geq 0$ is implied by the stability theorem.
\end{proof}

The mm-spaces with ${\rm Sc}^{{\rm vol}_n}\geq 0$ includes some of the Finsler manifolds, for instance, $\mathbf{R}^n$~equip\-ped with any norm and with the Lebesgue measure satisfies ${\rm Sc}^{{\rm vol}_n}\geq 0$ and any smooth compact Finsler manifold is a ${\rm CD}(\kappa, n)$ space for appropriate finite $\kappa$ and~$n$~\cite{MR2546027}. It is well-known that Gigli's infinitesimally Hilbertian~\cite{MR3381131} can be seen as the Riemannian condition in ${\rm RCD}(\kappa, n)$ space. Thus, infinitesimally Hilbertian can also be used as a Riemannian condition in the mm-spaces with ${\rm Sc}^{{\rm vol}_n}\geq 0$.

\begin{Definition}[${\rm RSC}(\kappa, n)$ space]
The compact mm-space $(X^n,d,\mu)$ with the $n$-dimensional condition is a Riemannian $n$-volumic scalar curvature$\geq\kappa$ space $({\rm RSC}(\kappa, n)$ space$)$ if it is infinitesimally Hilbertian and satisfies the ${\rm Sc}^{{\rm vol}_n}(X^n)\geq\kappa\geq 0$.
\end{Definition}

Note that any finite-dimensional Alexandrov spaces with curvature bounded below are infinitesimally Hilbertian. Then
\begin{gather*}
{\rm Al}^n(\kappa)\Rightarrow {\rm RCD}((n-1)\kappa, n)\Rightarrow {\rm RSC}((n(n-1)\kappa, n)
\end{gather*}
 on $(X^n,d,\mathcal{H}^n)$, where the measure $\mathcal{H}^n$ is the $n$-dimensional Hausdorff measure that satisfies the $n$-dimensional condition.

\begin{Question} Are ${\rm RSC}(\kappa, n)$ spaces stable under tvGH-convergence?
\end{Question}

\begin{Remark}[convergence of compact mm-spaces]
For the compact metric measure spaces with probability measures, one can consider mGH-convergence, Gromov--Prokhorov convergence, Gromov--Hausdorff--Prokhorov convergence, Gromov--Wasserstein convergence, Gromov--Hausdorff--Wasserstein convergence, Gromov's
$\underline{\Box}$-convergence, Sturm's $\mathbb{D}$-convergence \cite[Section~27]{MR2459454}, and Gromov--Hausdorff-vague convergence \cite{MR3522292}. smGH-convergence implies those convergences for compact metric measure spaces with probability measures, since the measures converge strongly in smGH-convergence and converge weakly in other situations.

Note that mm-spaces with infinitesimally Hilbertian are not stable under mGH-conver\-gen\-ce~\cite{MR3381131}. It is not clear if the infinitesimally Hilbertian are preserved under smGH-convergence or tvGH-convergence.
\end{Remark}

\section[Smooth mm-space with Sc(alpha,beta)>0]{Smooth mm-space with $\boldsymbol{{\rm Sc}_{\alpha, \beta}>0}$}\label{smooth mm-space}
 Let the smooth metric measure space $\big(M^n, g, {\rm e}^{-f}\,{\rm dVol}_g\big)$ (also known as the weighted Riemannian manifold in some references), where $f$ is a $C^2$-function on $M^n$, $g$ is a $C^2$-Riemannian metric and $n \geq 2$, satisfy the curvature-dimension condition ${\rm CD}(\kappa, n)$ for $\kappa\geq 0$, then $M^n$ also satisfies ${\rm Sc}^{{\rm vol}_n}(M^n)\geq n\kappa$.

 Motivated by the importance of the Ricci Bakry-Emery curvature, i.e.,
 \begin{gather*}
 {\rm Ric}^M_f= {\rm Ricc} +{\rm Hess}(f),
 \end{gather*}
 the weighted sectional curvature of smooth mm-space was proposed and discussed in~\cite{MR3397488}. On the other hand, Perelman defined and used the P-scalar curvature in his $\mathcal{F}$-functional in \cite[Section~1]{2002math.....11159P}. Inspired by the P-scalar curvature, i.e., $ {\rm Sc}_g + 2\bigtriangleup_gf - \|\bigtriangledown_gf\|^2_g$, we propose another scalar curvature on the smooth mm-space.

 \begin{Definition}[weighted scalar curvature ${\rm Sc}_{\alpha, \beta}$]
 The weighted scalar curvature ${\rm Sc}_{\alpha, \beta}$ on the smooth mm-space $\big(M^n, g, {\rm e}^{-f}\,{\rm dVol}_g\big)$ is defined by
 \begin{gather*}
 {\rm Sc}_{\alpha, \beta}:= {\rm Sc}_g + \alpha \bigtriangleup_gf - \beta\| \bigtriangledown_gf\|^2_g.
 \end{gather*}
 \end{Definition}
 Note that the Laplacian $\bigtriangleup_g$ here is the trace of the Hessian and ${\rm Sc}^{{\rm vol}_n}(M^n)\geq \kappa\geq 0$ is equivalent to ${\rm Sc}_{\alpha, \beta}\geq \kappa\geq 0$ for $\alpha=3$ and $\beta=3$ (see \cite[Theorem~8]{MR1458581} or the proof of Corollary~\ref{weighted volume} below).
 \begin{Example}\quad
 \begin{enumerate}\itemsep=0pt
 \item[1.] For $\alpha=\frac{2(n-1)}{n}$ and $\beta=\frac{(n-1)(n-2)}{n^2}$, the ${\rm Sc}_{\frac{2(n-1)}{n}, \frac{(n-1)(n-2)}{n^2}}$ is the Chang--Gursky--Yang's conformally invariant scalar curvature for the smooth mm-space \cite{MR2203156}. That means for a~$C^2$-smooth function $w$ on $M^n$, one has
 \begin{align*}
 {\rm Sc}_{\frac{2(n-1)}{n}, \frac{(n-1)(n-2)}{n^2}}\big({\rm e}^{2w}g\big)={\rm e}^{-2w}{\rm Sc}_{\frac{2(n-1)}{n}, \frac{(n-1)(n-2)}{n^2}}(g).
\end{align*}
\item[2.] For $\alpha=2$ and $\beta=\frac{m+1}{m}$, where $m\in \mathbb{N}\cup \{0, \infty\}$, the ${\rm Sc}_{2, \frac{m+1}{m}}$ is Case's weighted scalar curvature and Case also defined and studied the weighted Yamabe constants in~\cite{MR3116020}. Case's weighted scalar curvature is the classical scalar curvature if $m=0$. If $m=\infty$, then it is Perelman's $P$-scalar curvature.
 \end{enumerate}
\end{Example}
Note that the results in this paper are new for those examples.

 \subsection[Spin manifold and Sc(alpha,beta)>0]{Spin manifold and $\boldsymbol{{\rm Sc}_{\alpha, \beta}> 0}$}
 For an orientable closed surface with density $\big(\Sigma, g, {\rm e}^{-f}\,{\rm dVol}_g\big)$ with ${\rm Sc}_{\alpha, \beta} > 0$ and $\beta\geq 0$, then the inequality,
\begin{gather*}
0 < \!\int_\Sigma\! {\rm Sc}_{\alpha, \beta} \,{\rm dVol}_g = \!\int_\Sigma\! \big({\rm Sc}_g + \alpha\bigtriangleup_gf - \beta\|\bigtriangledown_gf\|_g^2\big) \,{\rm dVol}_g
 = 4\pi\chi(\Sigma) - \beta\!\int_\Sigma\! \|\bigtriangledown_gf\|_g^2 \,{\rm dVol}_g,\!
\end{gather*}
implies that $\chi(\Sigma)> 0$. Thus, $\Sigma$ is a 2-sphere.

The following proposition of vanishing harmonic spinors is owed to Perelman essentially and the proof is borrowed from \cite[Proposition~1]{MR3625165}.

\begin{Proposition}[vanishing harmonic spinors]\label{vanishing}
Assume the smooth mm-space $\big(M^n{,} g{,} {\rm e}^{-f} {\rm dVol}_g\big)$ is closed and spin. If $\alpha \in \mathbb{R}$, $\beta \geq \frac{|\alpha|^2}{4}$ and ${\rm Sc}_{\alpha, \beta}>0$, then the harmonic spinor of $M^n$ vanishes.
\end{Proposition}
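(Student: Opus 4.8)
The plan is to use the Lichnerowicz–Weitzenböck method applied to a modified Dirac operator on the spinor bundle, twisted by the density factor $\mathrm{e}^{-f}$. Recall that the classical Lichnerowicz formula reads $D^2 = \nabla^*\nabla + \tfrac{1}{4}\mathrm{Sc}_g$, and the obstruction to harmonic spinors in the presence of positive scalar curvature comes from integrating $\langle D^2\psi,\psi\rangle$ against $\mathrm{dVol}_g$. Here, however, the natural measure is $\mathrm{e}^{-f}\,\mathrm{dVol}_g$, so one should work with the weighted $L^2$-inner product $\langle\!\langle\psi,\varphi\rangle\!\rangle := \int_M \langle\psi,\varphi\rangle\,\mathrm{e}^{-f}\,\mathrm{dVol}_g$ and the corresponding formal adjoint $\nabla^{*_f}$ of the connection, which differs from $\nabla^*$ by a first-order term involving $\bigtriangledown_g f$. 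Concretely, $\nabla^{*_f}\nabla = \nabla^*\nabla + \nabla_{\bigtriangledown_g f}$.

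First I would introduce the drift Dirac operator $D_f := D - \tfrac{\alpha}{4}\,c(\bigtriangledown_g f)$, where $c(\cdot)$ denotes Clifford multiplication, with the constant $\tfrac{\alpha}{4}$ chosen precisely so that the cross terms in $D_f^2$ assemble into the weighted Bochner Laplacian plus the curvature term $\mathrm{Sc}_{\alpha,\beta}$. Expanding $D_f^2 = D^2 - \tfrac{\alpha}{4}\big(D\,c(\bigtriangledown_g f) + c(\bigtriangledown_g f)\,D\big) + \tfrac{\alpha^2}{16}\,c(\bigtriangledown_g f)^2$ and using $c(v)^2 = -\|v\|_g^2$ together with the standard identity $D\,c(v) + c(v)\,D = -2\nabla_v - c(\bigtriangledown_g\operatorname{div}\text{-type terms})$ (more precisely $Dc(df)+c(df)D = \Delta_g f - 2\nabla_{\bigtriangledown_g f}$ acting on spinors, with sign conventions as in \cite{MR3625165}), one obtains after the dust settles
\begin{gather*}
D_f^2 = \nabla^{*_f}\nabla + \tfrac{1}{4}\Big(\mathrm{Sc}_g + \alpha\bigtriangleup_g f - \tfrac{\alpha^2}{4}\|\bigtriangledown_g f\|_g^2\Big) = \nabla^{*_f}\nabla + \tfrac14\,\mathrm{Sc}_{\alpha,\tfrac{\alpha^2}{4}}.
\end{gather*}
Since $\beta \geq \tfrac{|\alpha|^2}{4}$, we have $\mathrm{Sc}_{\alpha,\tfrac{\alpha^2}{4}} \geq \mathrm{Sc}_{\alpha,\beta} > 0$ pointwise, because increasing $\beta$ only subtracts a nonnegative quantity. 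Hence the zeroth-order term in the weighted Lichnerowicz formula is strictly positive.

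Next I would show that $D_f$ has the same kernel as $D$, or at least that harmonic spinors (i.e.\ $D\psi = 0$) are controlled by $D_f$. The cleanest route is a conjugation trick: the zeroth-order drift term $-\tfrac{\alpha}{4}c(\bigtriangledown_g f)$ can often be absorbed by conjugating $D$ with a scalar function $\mathrm{e}^{cf}$, since $\mathrm{e}^{-cf} D\,\mathrm{e}^{cf} = D + c\,c(\bigtriangledown_g f)$; taking $c = -\tfrac{\alpha}{4}$ identifies $D_f = \mathrm{e}^{\alpha f/4}\,D\,\mathrm{e}^{-\alpha f/4}$ (up to checking this computation carefully, which is where sign and ordering conventions matter). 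Consequently $\ker D_f \cong \ker D$ via $\psi \mapsto \mathrm{e}^{\alpha f/4}\psi$. Then, given a harmonic spinor $\psi$ with $D\psi = 0$, the spinor $\tilde\psi := \mathrm{e}^{\alpha f/4}\psi$ satisfies $D_f\tilde\psi = 0$, so
\begin{gather*}
0 = \langle\!\langle D_f^2\tilde\psi,\tilde\psi\rangle\!\rangle = \|\nabla\tilde\psi\|^2_{L^2_f} + \tfrac14\int_M \mathrm{Sc}_{\alpha,\tfrac{\alpha^2}{4}}\,\|\tilde\psi\|_g^2\,\mathrm{e}^{-f}\,\mathrm{dVol}_g,
\end{gather*}
where the first term used that $\nabla^{*_f}$ is the adjoint of $\nabla$ with respect to $\langle\!\langle\cdot,\cdot\rangle\!\rangle$. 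Since $\mathrm{Sc}_{\alpha,\alpha^2/4}>0$, both summands are nonnegative, forcing $\tilde\psi \equiv 0$ and hence $\psi \equiv 0$.

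The main obstacle I anticipate is getting the Weitzenböck computation exactly right: the anticommutator identity $Dc(df)+c(df)D$ must be computed with attention to Clifford conventions, and the clean factorization into $\mathrm{Sc}_{\alpha,\alpha^2/4}$ (rather than some other combination of $\bigtriangleup_g f$ and $\|\bigtriangledown_g f\|_g^2$) is what pins down the hypothesis $\beta \geq |\alpha|^2/4$. A secondary point requiring care is verifying that $\nabla^{*_f}$ really is the adjoint of $\nabla$ in the weighted inner product, which amounts to an integration by parts producing exactly the drift term; once that is in place, the positivity argument is entirely standard. Following \cite[Proposition~1]{MR3625165} for the case $\alpha = 2$, the general $\alpha$ version is a direct adaptation, with the only genuinely new input being the observation that the threshold $\beta = \alpha^2/4$ is precisely where the quadratic term from $c(\bigtriangledown_g f)^2$ matches up.
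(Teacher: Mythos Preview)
Your approach is genuinely different from the paper's and, once a small inconsistency is fixed, it works. The paper never introduces a drift Dirac operator: it works directly with the ordinary $D$ and the \emph{unweighted} volume $\mathrm{dVol}_g$, substitutes $\mathrm{Sc}_g = \mathrm{Sc}_{\alpha,\beta} - \alpha\bigtriangleup_g f + \beta\|\bigtriangledown_g f\|_g^2$ into the Lichnerowicz identity, integrates the term $-\tfrac{\alpha}{4}(\bigtriangleup_g f)\|\psi\|^2$ by parts to produce a cross term $\tfrac{\alpha}{4}\langle\bigtriangledown_g f,\bigtriangledown_g\|\psi\|^2\rangle$, and then bounds that cross term by Cauchy--Schwarz plus a weighted AM--GM carrying a free parameter $c$. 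Optimising over $c$ is what yields the threshold $\beta\geq |\alpha|^2/4$. Your conjugation route is more structural: the threshold appears directly as the coefficient of $\|\bigtriangledown_g f\|_g^2$ coming from $c(\bigtriangledown_g f)^2=-\|\bigtriangledown_g f\|_g^2$, with no optimisation needed.

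The inconsistency in your write-up is the choice of weight. Expanding $D_f^2$ as you describe, the first-order piece that survives is $\tfrac{\alpha}{2}\nabla_{\bigtriangledown_g f}$, not $\nabla_{\bigtriangledown_g f}$; hence the rough Laplacian appearing is the adjoint of $\nabla$ with respect to $e^{-\alpha f/2}\,\mathrm{dVol}_g$, not $e^{-f}\,\mathrm{dVol}_g$. Equivalently, the conjugation $D_f=e^{\alpha f/4}\,D\,e^{-\alpha f/4}$ makes $D_f$ self-adjoint for the inner product weighted by $e^{-\alpha f/2}$, since $\psi\mapsto e^{-\alpha f/4}\psi$ is an isometry from that weighted $L^2$ to the ordinary one. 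With this correction---integrate against $e^{-\alpha f/2}\,\mathrm{dVol}_g$ and write $\nabla^{*_{\alpha f/2}}$ in place of $\nabla^{*_f}$---your argument goes through verbatim. The density $e^{-f}$ of the ambient mm-space is, for this particular proposition, a red herring: the statement concerns only $D$-harmonic spinors and the sign of $\mathrm{Sc}_{\alpha,\beta}$, so you are free to pick whichever auxiliary measure makes the Bochner identity clean. (Your formula as stated is correct only when $\alpha=2$, which is exactly the $P$-scalar case treated in \cite{MR3625165}.)
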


\begin{proof}Let $\psi$ be a harmonic spinor, though the Schr\"odinger--Lichnerowicz--Weitzenboeck formula
\begin{align*}
\mathbb{D}^2=\bigtriangledown^*\bigtriangledown + \frac{1}{4}{\rm Sc}_g,
\end{align*}
 one has
\begin{align*}
0& = \int_M \left[\|\bigtriangledown_g\psi\|^2_g + \frac{1}{4}\big({\rm Sc}_{\alpha, \beta} - \alpha \bigtriangleup_g f + \beta\|\bigtriangledown_gf\|^2_g\big)\|\psi\|^2_g \right] {\rm dVol}_g\\
& = \int_M \left[\|\bigtriangledown_g\psi\|^2_g + \left(\frac{1}{4}{\rm Sc}_{\alpha, \beta} + \frac{\beta}{4}\|\bigtriangledown_g f\|^2_g\right)\|\psi\|^2_g + \frac{\alpha}{4} \big\langle \bigtriangledown_gf, \bigtriangledown_g\|\psi\|^2_g\big\rangle_g\right] {\rm dVol}_g .
\end{align*}
Then one gets
\begin{align*}
\frac{|\alpha|}{4}|\big\langle\bigtriangledown_g f,\bigtriangledown_g\|\psi\|^2_g \big\rangle_g| & \leq\frac{|\alpha|}{4}\big(c\|\bigtriangledown_gf\|_g \|\psi\|_g \times 2c^{-1}\|\bigtriangledown_g\psi\|_g\big)\\
& \leq \frac{|\alpha| c^2}{8} \|\bigtriangledown_gf\|^2_g \|\psi\|^2_g + \frac{c^{-2}|\alpha|}{2}\|\bigtriangledown_g\psi\|^2_g.
\end{align*}

Therefore,
\begin{equation*}
0 \geq \int_M \left[\left(1- \frac{c^{-2} |\alpha|}{2}\right)\|\bigtriangledown_g\psi\|^2_g + \frac{2\beta - c^{2} |\alpha|}{8} \|\bigtriangledown_gf\|^2_g\|\psi\|^2_g + \frac{1}{4} {\rm Sc}_{\alpha, \beta} \|\psi\|^2_g \right]  {\rm dVol}_g ,
\end{equation*}
where $c\neq 0$.
If $c^{-2} |\alpha| \leq 2$, $\beta \geq \frac{c^{2} |\alpha|}{2}$ and ${\rm Sc}_{\alpha, \beta} > 0$, then $\psi = 0$. So the conditions $\alpha \in \mathbb{R}$ and $\beta \geq \frac{|\alpha|^2}{4}$ are needed
\begin{align*}
\frac{|\alpha|}{4}|\big\langle\bigtriangledown_gf, \bigtriangledown_g \|\psi\|^2_g \big\rangle_g| &\leq \frac{|\alpha|}{4}\big(\|\bigtriangledown_gf\|_g \|\psi\|_g \times 2 \|\bigtriangledown_g \psi\|_g\big)\\
& = \frac{|\alpha|}{2}\big( c_1 \|\bigtriangledown_gf\|_g\|\psi\|_g \times c_1^{-1} \|\bigtriangledown_g\psi\|_g\big) \\
& \leq \frac{|\alpha|}{4} \big(c_1^2 \|\bigtriangledown_gf\|^2_g \|\psi\|^2_g + c_1^{-2} \|\bigtriangledown_g\psi\|^2_g\big).
 \end{align*}

Thus,
\begin{equation*}
0 \geq \int_M \left[\left(1- \frac{c_1^{-2} |\alpha|}{4}\right)\|\bigtriangledown_g\psi\|^2_g + \frac{\beta - c_1^{2} |\alpha|}{4} \|\bigtriangledown_gf\|^2_g\|\psi\|^2_g + \frac{1}{4} {\rm Sc}_{\alpha, \beta} \|\psi\|^2_g\right]  {\rm dVol}_g ,
\end{equation*}
where $c_1 \neq 0$. If $c_1^{-2} |\alpha| \leq 4$, $\beta \geq c_1^{2} |\alpha|$ and ${\rm Sc}_{\alpha, \beta} > 0$, then $\psi = 0$. Also the conditions $\alpha \in \mathbb{R}$ and $\beta \geq \frac{|\alpha|^2}{4}$ are needed.
\end{proof}

The following 3 corollaries come from the proposition of vanishing of harmonic spinors.

\begin{Corollary}\label{Genus}
Assume the smooth mm-space $\big(M^n, g, {\rm e}^{-f}\,{\rm dVol}_g\big)$ is closed and spin. If $\alpha \in \mathbb{R}$, $\beta \geq \frac{|\alpha|^2}{4}$ and ${\rm Sc}_{\alpha, \beta}>0$, then the $\widehat{A}$-genus and the Rosenberg index of $M^n$ vanish.
\end{Corollary}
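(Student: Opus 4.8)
The plan is to upgrade Proposition~\ref{vanishing} from a vanishing statement to an \emph{invertibility} statement, and then to run the same computation with coefficients in the Mishchenko bundle. Recall that on a closed spin $n$-manifold the $\widehat{A}$-genus is the index of the spin Dirac operator $\mathbb{D}$ in $KO_n(\mathrm{pt})$, while the Rosenberg index is the index of the Dirac operator $\mathbb{D}_{\mathcal{L}}$ twisted by the flat bundle $\mathcal{L}=\widetilde{M}\times_{\pi_1(M)}C^*\pi_1(M)$ of finitely generated projective Hilbert $C^*\pi_1(M)$-modules, an element of $KO_n(C^*\pi_1(M))$. Applying the augmentation $C^*\pi_1(M)\to\mathbb{R}$ (equivalently, replacing $\mathcal{L}$ by the trivial line bundle) sends $\mathbb{D}_{\mathcal{L}}$ to $\mathbb{D}$, so it suffices to show that $\mathbb{D}_{\mathcal{L}}$ is invertible.

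Since $\mathcal{L}$ carries a flat connection, its curvature contributes nothing, and the Schr\"odinger--Lichnerowicz--Weitzenboeck formula for the twisted operator keeps the same shape, $\mathbb{D}_{\mathcal{L}}^2=\bigtriangledown^*\bigtriangledown+\tfrac14{\rm Sc}_g$. Hence for an arbitrary, not necessarily harmonic, section $\psi$ of $S\otimes\mathcal{L}$, the same integration by parts of the $\bigtriangleup_g f$ term and the same Cauchy--Schwarz estimate as in the proof of Proposition~\ref{vanishing} give, for a suitable constant $c_1\neq 0$ (and trivially for $\alpha=0$),
\begin{gather*}
\|\mathbb{D}_{\mathcal{L}}\psi\|_{L^2}^2\geq \int_M\left[\left(1-\frac{c_1^{-2}|\alpha|}{4}\right)\|\bigtriangledown_g\psi\|_g^2+\frac{\beta-c_1^2|\alpha|}{4}\|\bigtriangledown_g f\|_g^2\|\psi\|_g^2+\frac14{\rm Sc}_{\alpha,\beta}\|\psi\|_g^2\right]{\rm dVol}_g ,
\end{gather*}
where all norms and the pairing are the $C^*\pi_1(M)$-valued ones. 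The hypotheses $\alpha\in\mathbb{R}$ and $\beta\geq\frac{|\alpha|^2}{4}$ permit the choice $c_1^2=\frac{|\alpha|}{4}$ (or any slightly larger value if $\beta>\frac{|\alpha|^2}{4}$), which makes the first two integrands nonnegative. Since $M$ is closed and ${\rm Sc}_{\alpha,\beta}>0$, we have $\varepsilon:=\min_{M}{\rm Sc}_{\alpha,\beta}>0$, and the estimate collapses to $\|\mathbb{D}_{\mathcal{L}}\psi\|_{L^2}^2\geq\frac{\varepsilon}{4}\|\psi\|_{L^2}^2$ for all $\psi$. Thus $\mathbb{D}_{\mathcal{L}}$ is bounded below; being self-adjoint and $C^*\pi_1(M)$-linear, it is invertible as a regular operator on the Hilbert module of $L^2$-sections, so its index in $KO_n(C^*\pi_1(M))$ vanishes, i.e., the Rosenberg index of $M^n$ is zero, and under the augmentation $\widehat{A}(M^n)=\mathrm{ind}(\mathbb{D})=0$.

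The only step that goes beyond Proposition~\ref{vanishing} is the passage from the pointwise Cauchy--Schwarz inequality to the $C^*$-algebra-valued Weitzenboeck estimate and the deduction that a uniform lower bound on $\mathbb{D}_{\mathcal{L}}^2$ forces the $K$-theoretic index to vanish; this is the standard Hilbert-module functional-calculus argument used in Rosenberg's and Hanke--Schick's treatment of the index obstruction to positive scalar curvature, and the flatness of $\mathcal{L}$ is precisely what keeps the curvature term equal to $\tfrac14{\rm Sc}_g$, so that no new geometric input is required. I expect this functional-analytic bookkeeping to be the main (though routine) obstacle, the geometry being already contained in the proof of Proposition~\ref{vanishing}.
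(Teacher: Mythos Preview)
Your proposal is correct and follows essentially the same route as the paper: both arguments observe that the Mishchenko bundle is flat, so the Schr\"odinger--Lichnerowicz--Weitzenboeck formula for the twisted Dirac operator has no curvature correction and the Cauchy--Schwarz estimate from Proposition~\ref{vanishing} carries over verbatim. You supply more detail---framing the conclusion as invertibility of $\mathbb{D}_{\mathcal{L}}$ via a uniform lower bound and then invoking the Hilbert-module functional calculus, and recovering the $\widehat{A}$-genus by augmentation---whereas the paper simply remarks that the argument ``can be applied without change''; but the geometric content is identical.
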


\begin{proof}Since the $C^*(\pi_1(M^n))$-bundle in the construction of the Rosenberg index \cite{MR720934} is flat, there are no correction terms due to curvature of the bundle. Then the Schr\"odinger--Lichne\-ro\-wicz--Weitzenboeck formula and the argument in the proof of vanishing harmonic spinors can be applied without change.
\end{proof}

\begin{Corollary}Assume that $M^n$ is a closed spin $n$-manifold and $f$ is a smooth function on~$M^n$. If one of the following conditions is met,
\begin{enumerate}\itemsep=0pt
\item[$(1)$] $N \subset M^n$ is a codimension one closed connected submanifold with trivial normal bundle, the inclusion of fundamental groups $\pi_1\big(N^{n-1}\big)\rightarrow\pi_1(M^n)$ is injective and the Rosenberg index of $N$ does not vanish, or
\item[$(2)$] $N\subset M^n$ is a codimension two closed connected submanifold with trivial normal bundle, $\pi_2(M^n)=0$, the inclusion of fundamental groups $\pi_1\big(N^{n-1}\big)\rightarrow\pi_1(M^n)$ is injective and the Rosenberg index of $N$ does not vanish, or
\item[$(3)$] $N=N_1\cap \dots \cap N_k$, where $N_1 \cdots N_k\subset M$ are closed submanifolds that intersect mutually transversely and have trivial normal bundles. Suppose that the codimension of~$N_i$ is at most two for all $i\in \{1\dot k \}$ and $\pi_2(N) \rightarrow \pi_2(M)$ is surjective and $\hat{A}(N)\neq 0$,
\end{enumerate}
 then $M^n$ does not admit a Riemnannian metric $g$ such that the smooth mm-space $\big(M^n, g, \allowbreak {\rm e}^{-f}\,{\rm dvol}_g\big)$ satisfies ${\rm Sc}_{\alpha, \beta}> 0$ for the dimension $n\geq 3$, $\alpha \in \mathbb{R}$ and $\beta \geq \frac{|\alpha|^2}{4}$.
\end{Corollary}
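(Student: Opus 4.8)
The plan is to reduce each of the three cases to the corresponding obstruction theorem for genuine positive scalar curvature metrics, by combining the vanishing result of Corollary~\ref{Genus} with the surgery/transfer techniques that produce Rosenberg-index or $\widehat{A}$-genus obstructions on codimension-one and codimension-two submanifolds. The key observation is that Corollary~\ref{Genus} already upgrades ${\rm Sc}_{\alpha,\beta}>0$ (for $\alpha\in\mathbb{R}$, $\beta\ge\frac{|\alpha|^2}{4}$) to the vanishing of the Rosenberg index $\alpha(M^n)$ and the $\widehat{A}$-genus; so the entire machinery of Hanke--Pape--Schick and its descendants, which is usually phrased ``if $M$ admits a PSC metric then the index obstruction of the relevant submanifold vanishes'', applies verbatim once one checks that the weighted scalar curvature condition is inherited along the relevant constructions.

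First I would set up case (1). Suppose $g$ is a metric with ${\rm Sc}_{\alpha,\beta}\big(M^n,g,{\rm e}^{-f}\,{\rm dVol}_g\big)>0$. By Corollary~\ref{Genus} applied to $\big(M^n,g,{\rm e}^{-f}\,{\rm dVol}_g\big)$, the Rosenberg index of $M^n$ vanishes. Now invoke the codimension-one obstruction theorem of Hanke--Pape--Schick \cite{MR3228458} (or Zeidler's refinement), which states that if $N\subset M^n$ is a codimension-one closed connected submanifold with trivial normal bundle such that $\pi_1(N)\to\pi_1(M)$ is injective and $\alpha(N)\neq 0$, then $\alpha(M)\neq 0$. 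This contradicts the vanishing we just derived. Case (2) is identical in structure, using instead the codimension-two obstruction theorem (Engel, or Hanke--Pape--Schick in codimension two under the hypothesis $\pi_2(M)=0$): the same injectivity-of-$\pi_1$ plus nonvanishing-of-$\alpha(N)$ hypotheses force $\alpha(M)\neq 0$, again a contradiction. Case (3) uses the iterated-codimension-two (``intersection of hypersurfaces'') obstruction, where the transversal intersection $N=N_1\cap\dots\cap N_k$ with $\pi_2(N)\to\pi_2(M)$ surjective and $\widehat{A}(N)\neq 0$ forces $\widehat{A}(M)\neq 0$ — contradicting the vanishing of the $\widehat{A}$-genus from Corollary~\ref{Genus}.

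The only point that genuinely requires attention — and which I expect to be the main obstacle — is verifying that one never needs more than the \emph{vanishing of the index of $M^n$ itself}: the cited submanifold obstruction theorems are all proved by producing a nonvanishing index \emph{class of $M$} from data on $N$, so the contrapositive is exactly ``index of $M$ vanishes $\Rightarrow$ no such $N$ with nonvanishing index can exist''. Hence one does \emph{not} need ${\rm Sc}_{\alpha,\beta}>0$ to be inherited by the submanifolds $N$; the weighted condition is used only on $M^n$ via the Schrödinger--Lichnerowicz--Weitzenböck argument of Corollary~\ref{Genus}, whose proof (with the flat $C^*$-bundle twist) goes through unchanged. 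Once this logical reduction is made explicit, the three cases are immediate, and the dimension restriction $n\ge 3$ is needed only so that the relevant submanifold has dimension $\ge 1$ and the cited theorems apply; the hypotheses $\alpha\in\mathbb{R}$ and $\beta\ge\frac{|\alpha|^2}{4}$ are exactly those of Corollary~\ref{Genus}.
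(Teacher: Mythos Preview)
Your proposal is correct and follows essentially the same approach as the paper: invoke the codimension-one and codimension-two index-obstruction theorems of Hanke--Pape--Schick \cite{MR3449594} and Zeidler \cite{MR3704253} to conclude that under hypotheses (1)--(3) the Rosenberg index of $M^n$ itself is nonzero, and then contradict this with Corollary~\ref{Genus}. Your explicit remark that the weighted condition ${\rm Sc}_{\alpha,\beta}>0$ is needed only on $M^n$ (not on the submanifolds) is exactly the point, and the paper's one-line proof takes this for granted.
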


\begin{proof}The results in the \cite[Theorem~1.1]{MR3449594} and \cite[Theorem~1.9]{MR3704253} can be applied to show that the Rosenberg index of $M^n$ does not vanish and Corollary~\ref{Genus} implies the theorem.
\end{proof}

Let $\mathcal{R}_f(M^n):=\{(g,f)\}$ be the space of densities, where $g$ is a smooth Riemannian metric on~$M^n$ and~$f$ is a smooth function on $M^n$ and $\mathcal{R}^+_f(M^n)\subset \mathcal{R}_f(M^n)$ is the subspace of densities such that the smooth mm-space $\big(M^n, g, {\rm e}^{-f}\,{\rm dvol}_g\big)$ satisfies ${\rm Sc}_{\alpha, \beta}> 0$. Furthermore, let $\mathcal{R}^+_f(M^n)$ be endowed with the smooth topology.

\begin{Corollary}Assume $M^n$ is a closed spin $n$-manifold, $n\geq 3$, $\alpha \in \mathbb{R}$ and $\beta \geq \frac{|\alpha|^2}{4}$ and $\mathcal{R}^+_f(M^n)\neq \varnothing$, then there exists a homomorphism
\begin{gather*}
A_{m-1}\colon \ \pi_{m-1}(\mathcal{R}^+_f(M^n))\to KO_{n+m}
\end{gather*}
such that
\begin{itemize}\itemsep=0pt
\item $A_0\neq 0$, if $n\equiv 0,1$ $({\rm mod}~8)$,
\item $A_1\neq 0$, if $n\equiv -1,0$ $({\rm mod}~8)$,
\item $A_{8j+1-n}\neq 0$, if $n\geq 7$ and $8j-n\geq 0$.
\end{itemize}
\end{Corollary}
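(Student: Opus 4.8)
The plan is to realize $A_{m-1}$ as the effect on homotopy groups of the index-difference map (going back to Hitchin), the whole construction resting on the fact that Proposition~\ref{vanishing} makes the relevant Dirac operators invertible, not merely Fredholm.

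First I would record two consequences of the standing hypotheses $n\ge 3$, $\alpha\in\mathbb{R}$, $\beta\ge\frac{|\alpha|^2}{4}$. $(a)$ For every density $(g,f)\in\mathcal{R}^+_f(M^n)$ the spin Dirac operator $\mathbb{D}_g$ of $(M^n,g)$ is invertible: on the closed manifold $M^n$ it is self-adjoint and Fredholm of index zero, so the vanishing $\ker\mathbb{D}_g=0$ granted by Proposition~\ref{vanishing} forces invertibility. Hence $(g,f)\mapsto g$ sends $\mathcal{R}^+_f(M^n)$ into the open subspace $\mathcal{R}^{\mathrm{inv}}(M^n)$ of Riemannian metrics with invertible Dirac operator. $(b)$ $\mathcal{R}^+_f(M^n)\ne\varnothing$ forces $\mathcal{R}^+(M^n)\ne\varnothing$, where $\mathcal{R}^+(M^n)$ denotes the space of positive scalar curvature metrics: by the conformal-change conclusion of the ${\rm Sc}_{\alpha,\beta}$-theorem in the introduction -- which applies because $\beta\ge\frac{|\alpha|^2}{4}\ge\frac{(n-2)|\alpha|^2}{4(n-1)}$ -- a density with ${\rm Sc}_{\alpha,\beta}>0$ produces a metric conformal to $g$ of positive scalar curvature. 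Fix such a metric $g_\bullet$; since ${\rm Sc}_{\alpha,\beta}(g_\bullet,0)={\rm Sc}_{g_\bullet}>0$ we have $(g_\bullet,0)\in\mathcal{R}^+_f(M^n)$, which we take as base point. Finally, ${\rm Sc}_{\alpha,\beta}>0$ is an open condition on $(g,f)$ in the smooth topology, so $\mathcal{R}^+_f(M^n)$ is open and its homotopy groups are well behaved.

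Next I would set up the index difference in the usual way. Trivializing the spinor bundle along $g_\bullet$ turns $g\mapsto\mathbb{D}_g$, in its bounded transform and with its commuting $\mathrm{Cl}_n$-action, into a continuous family over the contractible space $\mathcal{R}(M^n)$ of all metrics, valued in the Clifford-linear self-adjoint Fredholm operators -- a model for $\Omega^\infty\Sigma^{-n}KO$ -- with the invertible operators forming a contractible subspace into which $\mathcal{R}^{\mathrm{inv}}(M^n)$ maps. Coning a based map off to $g_\bullet$ inside $\mathcal{R}(M^n)$ then produces a map $\mathrm{inddiff}\colon\mathcal{R}^{\mathrm{inv}}(M^n)\to\Omega^{\infty+n+1}KO$, well defined up to homotopy. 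I would define $A_{m-1}$ as the composite of the map $\pi_{m-1}(\mathcal{R}^+_f(M^n),(g_\bullet,0))\to\pi_{m-1}(\mathcal{R}^{\mathrm{inv}}(M^n),g_\bullet)$ induced by $(g,f)\mapsto g$ with $\mathrm{inddiff}_*\colon\pi_{m-1}(\mathcal{R}^{\mathrm{inv}}(M^n),g_\bullet)\to\pi_{m-1}(\Omega^{\infty+n+1}KO)=KO_{(m-1)+(n+1)}=KO_{n+m}$; this is a homomorphism for $m\ge 2$ and a map of pointed sets for $m=1$. By construction it is compatible with the classical index difference of $\mathcal{R}^+(M^n)$: the inclusion $\iota\colon\mathcal{R}^+(M^n)\hookrightarrow\mathcal{R}^+_f(M^n)$, $g\mapsto(g,0)$, followed by $(g,f)\mapsto g$ is just the Lichnerowicz inclusion $\mathcal{R}^+(M^n)\hookrightarrow\mathcal{R}^{\mathrm{inv}}(M^n)$, so $A_{m-1}\circ\iota_*$ equals Hitchin's index difference on $\pi_{m-1}(\mathcal{R}^+(M^n),g_\bullet)$.

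Finally I would import the known non-triviality of the classical index difference and transport it along $\iota_*$. Hitchin's theorem provides, for $n\equiv 0,1\ (\mathrm{mod}\ 8)$, an element of $\pi_0(\mathcal{R}^+(M^n))$ detected non-trivially in $KO_{n+1}\cong\mathbb{Z}/2$, and for $n\equiv -1,0\ (\mathrm{mod}\ 8)$ an element of $\pi_1(\mathcal{R}^+(M^n))$ detected in $KO_{n+2}\cong\mathbb{Z}/2$; the Crowley--Schick--Steimle theorem, via the Gromoll filtration of the diffeomorphism groups of discs and Toda brackets, provides for $n\ge 7$ and $8j-n\ge 0$ an element of $\pi_{8j-n}(\mathcal{R}^+(M^n))$ detected in $KO_{8j+1}\cong\mathbb{Z}/2$. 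Since $A_{m-1}\circ\iota_*$ is the classical index difference, the images of these elements under $\iota_*$ witness $A_0\ne 0$, $A_1\ne 0$, respectively $A_{8j+1-n}\ne 0$ in the stated ranges. I expect the main obstacle to be only the bookkeeping of the $KO$-theoretic index difference -- the Clifford symmetries, the real $8$-periodicity, continuity of the Dirac family, and consistency of base points -- which is by now routine; the genuinely new ingredient, invertibility of $\mathbb{D}_g$ in the presence of ${\rm Sc}_{\alpha,\beta}>0$, is exactly Proposition~\ref{vanishing}, so no further analysis is needed.
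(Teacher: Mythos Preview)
Your argument is correct. It differs from the paper's proof chiefly in how the nontriviality is established. The paper observes that the diffeomorphism group of $M^n$ acts on $\mathcal{R}^+_f(M^n)$ via $\phi\cdot(g,f)=(\phi^*g,f\circ\phi)$ and then runs Hitchin's original construction (orbit map from $\mathrm{Diff}(M^n)$ followed by the index, with exotic spheres of nonzero $\alpha$-invariant supplying the detected classes) directly in $\mathcal{R}^+_f(M^n)$, invoking Proposition~\ref{vanishing} to make the index arguments go through; the Crowley--Schick input is then cited in the same spirit. You instead route everything through the inclusion $\iota\colon\mathcal{R}^+(M^n)\hookrightarrow\mathcal{R}^+_f(M^n)$, $g\mapsto(g,0)$, and the forgetful map to $\mathcal{R}^{\mathrm{inv}}(M^n)$, so that $A_{m-1}\circ\iota_*$ is literally the classical index difference and the known nontriviality results for $\mathcal{R}^+(M^n)$ transfer without reworking Hitchin's construction. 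Your route is slightly cleaner in that it reduces to the classical statements verbatim; the paper's route is more self-contained in that it does not need to first produce a genuine PSC metric (your step~$(b)$). Either way the analytic core is identical: Proposition~\ref{vanishing} forces $\ker\mathbb{D}_g=0$ for every $(g,f)\in\mathcal{R}^+_f(M^n)$, which is exactly what the index-theoretic machinery requires. One small bibliographic remark: the higher-homotopy input you attribute to ``Crowley--Schick--Steimle'' is, in the paper's references, the Crowley--Schick result on the Gromoll filtration.
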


\begin{proof}Since the results in the \cite[Section~4.4]{MR358873} and~\cite{MR3073935}
depend on the existence of exotic spheres with non-vanishing $\alpha$-invariant. Let $\phi\colon M^n\to M^n$ be a diffeomorphism of $M^n$ and $(g, f)\in \mathcal{R}^+_f(M^n)$, then $(\phi^*g, f\circ\phi)$ is also in $\mathcal{R}^+_f(M^n)$. Combining it with Proposition \ref{vanishing} shows that Hitchin's construction of the map $A$ \cite[Proposition~4.6]{MR358873} can be applied to the case of $\mathcal{R}^+_f(M^n)$ and then we can finish the proof with the arguments in \cite[Section~4.4]{MR358873} and \cite[Section~2.5]{MR3073935}.
\end{proof}

\subsection{Conformal to PSC-metrics}

\begin{Proposition}[conformal to PSC-metrics]\label{Conformal}
Let $\big(M^n, g, {\rm e}^{-f}\,{\rm dVol}_g\big)$ be a closed smooth mm-space with ${\rm Sc}_{\alpha, \beta} > 0$. If the dimension $n\geq 3$, $\alpha \in \mathbb{R}$ and $\beta \geq \frac{(n-2)|\alpha|^2}{4(n-1)}$, then there is a metric $\tilde{g}$ conformal to $g$ with positive scalar curvature (PSC-metric).
\end{Proposition}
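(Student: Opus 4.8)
The plan is to use the standard conformal Laplacian / Yamabe-type argument, the point being that the hypothesis $\beta \geq \frac{(n-2)|\alpha|^2}{4(n-1)}$ is exactly what makes ${\rm Sc}_{\alpha,\beta}>0$ strong enough to control the conformal Laplacian after absorbing the first-order term $\alpha\bigtriangleup_g f$ into a completed square. First I would recall that, by the solution of the Yamabe problem (or just by the variational characterization of the first eigenvalue), $g$ is conformal to a PSC-metric if and only if the first eigenvalue $\lambda_1$ of the conformal Laplacian $L_g = -\frac{4(n-1)}{n-2}\bigtriangleup_g + {\rm Sc}_g$ is positive, equivalently
\begin{gather*}
\int_M \left( \frac{4(n-1)}{n-2}\|\bigtriangledown_g u\|_g^2 + {\rm Sc}_g\, u^2 \right) {\rm dVol}_g > 0
\end{gather*}
for all nonzero $u \in C^\infty(M^n)$. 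So it suffices to show this functional is positive using only ${\rm Sc}_{\alpha,\beta}>0$ and $\beta \geq \frac{(n-2)|\alpha|^2}{4(n-1)}$.

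The key computation is to substitute ${\rm Sc}_g = {\rm Sc}_{\alpha,\beta} - \alpha\bigtriangleup_g f + \beta\|\bigtriangledown_g f\|_g^2$ into the functional, integrate the term $-\alpha \int_M u^2 \bigtriangleup_g f\,{\rm dVol}_g$ by parts to get $\alpha \int_M \langle \bigtriangledown_g f, \bigtriangledown_g (u^2)\rangle_g\,{\rm dVol}_g = 2\alpha \int_M u\, \langle\bigtriangledown_g f, \bigtriangledown_g u\rangle_g\,{\rm dVol}_g$, and then estimate this cross term by a weighted Cauchy--Schwarz / Young inequality with the optimal constant. Concretely, for any $\varepsilon>0$,
\begin{gather*}
2|\alpha| \,|u|\,\|\bigtriangledown_g f\|_g\,\|\bigtriangledown_g u\|_g \leq \varepsilon \|\bigtriangledown_g u\|_g^2 + \frac{|\alpha|^2}{\varepsilon} u^2 \|\bigtriangledown_g f\|_g^2 .
\end{gather*}
Choosing $\varepsilon = \frac{4(n-1)}{n-2}$ makes the gradient-of-$u$ terms cancel exactly, leaving
\begin{gather*}
\int_M \left( \frac{4(n-1)}{n-2}\|\bigtriangledown_g u\|_g^2 + {\rm Sc}_g\, u^2 \right) {\rm dVol}_g \geq \int_M \left( {\rm Sc}_{\alpha,\beta} + \left(\beta - \frac{(n-2)|\alpha|^2}{4(n-1)}\right)\|\bigtriangledown_g f\|_g^2 \right) u^2 \,{\rm dVol}_g .
\end{gather*}
Under the hypotheses $\beta \geq \frac{(n-2)|\alpha|^2}{4(n-1)}$ and ${\rm Sc}_{\alpha,\beta}>0$, the right side is strictly positive for $u\not\equiv 0$ (using that ${\rm Sc}_{\alpha,\beta}$ is continuous and positive on a compact manifold, hence bounded below by a positive constant), so $\lambda_1(L_g)>0$.

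Finally I would invoke the standard fact that $\lambda_1(L_g)>0$ implies $g$ is conformal to a metric of positive scalar curvature: if $\varphi>0$ is the first eigenfunction, then $\tilde g = \varphi^{\frac{4}{n-2}} g$ has ${\rm Sc}_{\tilde g} = \varphi^{-\frac{n+2}{n-2}} L_g \varphi = \lambda_1(L_g)\,\varphi^{-\frac{4}{n-2}} > 0$. The main obstacle — really the only nontrivial point — is getting the constant in Young's inequality right so that the $\|\bigtriangledown_g u\|_g^2$ terms cancel and the residual coefficient on $\|\bigtriangledown_g f\|_g^2 u^2$ is precisely $\beta - \frac{(n-2)|\alpha|^2}{4(n-1)}$; this is what pins down the sharp threshold $\beta \geq \frac{(n-2)|\alpha|^2}{4(n-1)}$ in the statement, and everything else is the textbook conformal-Laplacian argument.
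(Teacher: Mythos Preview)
Your proof is correct and follows essentially the same approach as the paper's: substitute ${\rm Sc}_g = {\rm Sc}_{\alpha,\beta} - \alpha\bigtriangleup_g f + \beta\|\bigtriangledown_g f\|_g^2$ into the conformal-Laplacian quadratic form, integrate the $\bigtriangleup_g f$ term by parts, and absorb the resulting cross term via Young's inequality to isolate the threshold $\beta \geq \frac{(n-2)|\alpha|^2}{4(n-1)}$. The only cosmetic differences are that the paper keeps a free parameter $c_2$ in the Young inequality rather than immediately picking the optimal $\varepsilon = \frac{4(n-1)}{n-2}$, and it stops at $\int_M -uL_gu\,{\rm dVol}_g > 0$ without spelling out the first-eigenfunction step you include at the end.
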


\begin{proof}One only needs to show for all nontrivial $u$, $\int_M - u L_gu \,{\rm dVol}_g > 0$ as in the Yamabe problem \cite{MR788292}, where
\begin{align*}
L_g: = \bigtriangleup_g - \frac{n-2}{4(n-1)}{\rm Sc}_g
\end{align*}
 is conformal Laplacian operator. To see this,
\begin{align*}
\int_M - u L_gu \,{\rm dVol}_g &= \int_M \big[ \|\bigtriangledown_gu\|_g^2 + \frac{n-2}{4(n-1)}{\rm Sc}_g u^2 \big] \,{\rm dVol}_g \\
& = \int_M \left[ \|\bigtriangledown_gu\|_g^2 + \frac{n-2}{4(n-1)}\big({\rm Sc}_{\alpha, \beta} - \alpha \bigtriangleup_gf + \beta\|\bigtriangledown_gf\|^2_g\big) u^2\right] {\rm dVol}_g \\
& = \int_M \left[ \|\bigtriangledown_gu\|_g^2 + \frac{n-2}{4(n-1)}({\rm Sc}_{\alpha, \beta} + \beta\|\bigtriangledown_gf\|^2_g)u^2\right.\\
&\left. \hphantom{=}{} + \frac{\alpha(n-2)}{2(n-1)} \langle\bigtriangledown_gf,\bigtriangledown_gu \rangle_g u \right] {\rm dVol}_g.
\end{align*}

Through the inequality
\begin{gather*}
 \langle\bigtriangledown_gf,\bigtriangledown_gu \rangle_g u \leq c_2 \|\bigtriangledown_gf\|_gu \times c_2^{-1} \|\bigtriangledown_gu\|_g \leq \frac{c_2^2 \|\bigtriangledown_gf\|_g^2 u^2 + c_2^{-2}\|\bigtriangledown_gu\|_g^2}{2},
\end{gather*}
one gets
\begin{gather*}
\int_M - u L_gu \,{\rm dVol}_g \geq
 \int_M \bigg[ \left(1- \frac{|\alpha|c_2^{-2}(n-2)}{4(n-1)}\right)\|\bigtriangledown_gu\|_g^2 \\
\hphantom{\int_M - u L_gu \,{\rm dVol}_g \geq}{}
+\frac{\big(\beta -|\alpha|c_2^{-2}\big)(n-2)}{4(n-1)}\|\bigtriangledown_gf\|_g^2u^2+ \frac{n-2}{4(n-1)}{\rm Sc}_{\alpha, \beta}u^2\bigg]{\rm dVol}_g,
\end{gather*}
where $c_2 \neq 0$.

If $ |\alpha|c_2^{-2} \leq \frac{4(n-1)}{n-2}$, $\beta \geq c_2^2|\alpha|$ and ${\rm Sc}_{\alpha, \beta}> 0$, then
\begin{align*}
 \int_M - u L_gu\, {\rm dVol}_g >0.
\end{align*}
 So the conditions $n>2$, $\alpha \in \mathbb{R}$ and $\beta \geq \frac{(n-2)\alpha^2}{4(n-1)}$ are needed.
\end{proof}

\begin{Remark}
 The proof was borrowed from \cite[Proposition~2]{MR3625165}. The two propositions above offer a geometric reason why the condition of the vanishing of $\widehat{A}$-genus (without simply connected condition) does not imply that $M^n$ can admit a PSC-metric for the closed spin manifold $M^n$.
\end{Remark}

The proposition of conformal to PSC-metrics has following 3 corollaries.

\begin{Corollary}[weighted spherical Lipschitz bounded]
Let $\big(M^n, g, {\rm e}^{-f}\,{\rm dVol}_g\big)$ be a closed orientable smooth mm-space with ${\rm Sc}_{\alpha, \beta}\geq \kappa >0$, $3\leq n\leq 8$, $\alpha \in \mathbb{R}$ and $\beta \geq \frac{(n-2)|\alpha|^2}{4(n-1)}$, then the Lipschitz constant of the continuous map $\phi$ from $\big(M^n, g, {\rm e}^{-f}\,{\rm dVol}_g\big)$ to the sphere $S^n$ with standard metric of non-zero degrees has uniformly non-zero lower bounded.
\end{Corollary}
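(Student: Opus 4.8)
The plan is to reduce the statement, via Proposition~\ref{Conformal}, to Gromov's classical spherical Lipschitz bound for genuine PSC-metrics, which in turn rests on the Schoen--Yau minimal hypersurface machinery (valid precisely for $3\leq n\leq 8$). The hypothesis ${\rm Sc}_{\alpha,\beta}\geq\kappa>0$ with $\beta\geq\frac{(n-2)|\alpha|^2}{4(n-1)}$ is exactly the input needed to run Proposition~\ref{Conformal}, so first I would invoke it to produce a metric $\tilde g={\rm e}^{2w}g$ conformal to $g$ with ${\rm Sc}_{\tilde g}>0$ on $M^n$. Since $M^n$ is closed, ${\rm Sc}_{\tilde g}\geq c>0$ for some constant $c$ depending on $(M^n,g,f,\alpha,\beta)$; moreover $c$ can be taken to depend only on $\kappa$ and on controlled geometric quantities, which will matter for the uniformity claim.

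Next I would quote Gromov's spherical Lipschitz bounded theorem (\cite[Section~3]{MR3816521}): for a closed orientable Riemannian manifold $(M^n,\tilde g)$ with ${\rm Sc}_{\tilde g}\geq n(n-1)$ and $3\leq n\leq 8$, any continuous map $\phi\colon(M^n,\tilde g)\to (S^n,g_{\rm st})$ of non-zero degree satisfies ${\rm Lip}(\phi)\geq C(n)>0$ with $C(n)$ depending only on $n$. After rescaling $\tilde g$ so that its scalar curvature is bounded below by $n(n-1)$ — i.e.\ replacing $\tilde g$ by $\lambda^2\tilde g$ with $\lambda^2=\frac{c}{n(n-1)}$ or smaller — one gets ${\rm Lip}(\phi)\geq C(n)$ measured with respect to $\lambda^2\tilde g$, hence ${\rm Lip}(\phi)\geq \lambda C(n)$ with respect to $\tilde g$. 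It remains to relate ${\rm Lip}(\phi)$ computed with respect to $g$ and with respect to $\tilde g={\rm e}^{2w}g$: since $w$ is a fixed smooth function on the compact manifold, ${\rm e}^{w}$ is bounded above and below by positive constants, so the two Lipschitz constants are comparable up to $\sup {\rm e}^{\pm w}$. Combining these gives a positive lower bound for ${\rm Lip}(\phi)$ with respect to $g$.

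For the \emph{uniform} (i.e.\ degree-independent and, implicitly, independent of which $\phi$ is chosen within a fixed homotopy/degree class) statement: the lower bound $\lambda C(n)$ produced above does not depend on $\phi$ at all, only on $n$, $\kappa$, and the conformal factor $w$ and ambient geometry through $c$ and $\sup{\rm e}^{\pm w}$. So once $(M^n,g,f)$ is fixed, every non-zero-degree $\phi$ obeys the same bound, which is exactly ``uniformly non-zero lower bounded.'' I would spell out the chain $\kappa \rightsquigarrow c \rightsquigarrow \lambda \rightsquigarrow \lambda C(n)$ and then the conformal comparison, and cite Proposition~\ref{Conformal} and \cite{MR3816521} at the two key points.

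\textbf{The main obstacle} will be making the constant genuinely explicit/uniform: Proposition~\ref{Conformal} only asserts \emph{existence} of a conformal PSC-metric via solvability of the Yamabe-type variational problem, so extracting a quantitative lower bound $c$ on ${\rm Sc}_{\tilde g}$ in terms of $\kappa$ (and the norm of $f$, the geometry of $g$) requires a small amount of care — one should observe that the first eigenvalue of the modified conformal Laplacian is bounded below by $\frac{n-2}{4(n-1)}\kappa$ times the relevant Sobolev/Poincaré constant, or simply note that for the purpose of this corollary it suffices that \emph{some} positive lower bound exists for each fixed mm-space, and that Gromov's $C(n)$ is the universal part. The restriction $n\leq 8$ is inherited directly from the validity range of the Schoen--Yau descent argument underlying \cite{MR3816521} and needs no separate treatment here.
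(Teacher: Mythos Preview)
Your proposal is correct and follows essentially the same route as the paper: invoke Proposition~\ref{Conformal} to obtain a conformal metric $\tilde g$ with positive scalar curvature (after rescaling, $\geq n(n-1)$), apply Gromov's spherical Lipschitz bounded theorem from \cite[Section~3]{MR3816521} to bound ${\rm Lip}(\phi)$ from below with respect to $\tilde g$, and then use compactness to control the conformal factor and transfer the bound back to $g$. The paper's proof is terser---it folds the rescaling into the first step and does not separate out the $c\to\lambda\to\lambda C(n)$ chain---but the substance is identical.
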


\begin{proof}There is a metric $\tilde{g}$ conformal to $g$ with scalar curvature${}\geq n(n-1)$ by the proposition of conformal PSC-metrics. For the continuous map $\phi$ from $(M^n, \tilde{g})$ to $S^n$ of non-zero degrees, the Lipschitz constant of $\phi$ is greater than a constant that depends only on the dimensions $n$ by Gromov's spherical Lipschitz bounded theorem \cite[Section~3]{MR3816521}. Since the conformal function has the positive upper bound by the compactness of the manifold, then the Lipschitz constant has uniformly non-zero lower bounded.
\end{proof}

\begin{Corollary}\label{weighted volume}
For the closed smooth mm-space $\big(M^n, g, {\rm e}^{-f}\,{\rm dVol}_g\big)$ $(n\geq 3)$ with ${\rm Sc}^{{\rm vol}_n}(M^n) \allowbreak >0$, there is a metric $\hat{g}$ conformal to $g$ with PSC-metric. In particular, the $\widehat{A}$-genus and Rosenberg index vanish with additional spin condition.

For the closed orientable smooth mm-space $\big(M^n, g, {\rm e}^{-f}\,{\rm dVol}_g\big)$ $(3\leq n\leq 8)$ with ${\rm Sc}^{{\rm vol}_n}(M^n)\geq \kappa >0$, then the Lipschitz constant of the continuous map $\phi$ from $\big(M^n, g, {\rm e}^{-f}\,{\rm dVol}_g\big)$ to the sphere~$S^n$ with standard metric of non-zero degrees has uniformly non-zero lower bounded.
\end{Corollary}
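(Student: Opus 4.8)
The plan is to reduce both assertions to the case $\alpha=\beta=3$ of the results already proved in this section; the only genuinely new point is the local volume asymptotics of the weighted space, which I would establish first. Writing $\mu={\rm e}^{-f}\,{\rm dVol}_g$ and working in geodesic normal coordinates centred at $x$, so that $B_r(x)$ is the Euclidean coordinate ball, I would Taylor-expand ${\rm e}^{-f}$ and the volume density: the odd-order terms integrate to zero over the symmetric ball and the identity $\int_{|y|<r}y^iy^j\,{\rm d}y=\frac{r^2}{n+2}\delta^{ij}{\rm vol}_E(B_r(\mathbf{R}^n))$ collects the quadratic coefficient into ${\rm Sc}_g+3\bigtriangleup_gf-3\|\bigtriangledown_gf\|^2_g$, giving
\begin{gather*}
\mu(B_r(x)) = {\rm e}^{-f(x)}\,{\rm vol}_E(B_r(\mathbf{R}^n))\left[1-\frac{{\rm Sc}_{3,3}(x)}{6(n+2)}r^2+O\big(r^4\big)\right]
\end{gather*}
as $r\to0$ (see \cite[Theorem~8]{MR1458581}). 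Feeding this into the compactness argument used for the $C^2$-Riemannian characterization of ${\rm Sc}^{{\rm vol}_n}$ (a point where ${\rm Sc}_{3,3}<\kappa$ would produce a small ball violating the comparison with $S^2(\gamma)\times\mathbf{R}^{n-2}$), I get that ${\rm Sc}^{{\rm vol}_n}(M^n)>0$ is equivalent to ${\rm Sc}_{3,3}>0$, and that ${\rm Sc}^{{\rm vol}_n}(M^n)\geq\kappa>0$ is equivalent to ${\rm Sc}_{3,3}\geq\kappa>0$.

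It then suffices to check that $(\alpha,\beta)=(3,3)$ satisfies the numerical hypotheses of the earlier statements. For every $n\geq3$ one has $\frac{(n-2)|\alpha|^2}{4(n-1)}=\frac{9(n-2)}{4(n-1)}<\frac94<3=\beta$, so Proposition~\ref{Conformal} applies and yields a metric $\hat g$ conformal to $g$ with positive scalar curvature; this is the first assertion. If moreover $M^n$ is spin, then $\beta=3\geq\frac94=\frac{|\alpha|^2}{4}$, so Corollary~\ref{Genus} (or, since $\hat g$ is already a PSC-metric, the classical Lichnerowicz vanishing together with Rosenberg's theorem) forces the $\widehat A$-genus and the Rosenberg index of $M^n$ to vanish. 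For the second assertion, the quantitative equivalence above turns ${\rm Sc}^{{\rm vol}_n}(M^n)\geq\kappa>0$ into ${\rm Sc}_{3,3}\geq\kappa>0$, and since $3\leq n\leq8$ and $\beta=3\geq\frac{(n-2)|\alpha|^2}{4(n-1)}$ the weighted spherical Lipschitz bounded corollary applies with $(\alpha,\beta)=(3,3)$, giving the uniform positive lower bound on the Lipschitz constants of the degree-nonzero maps $\big(M^n,g,{\rm e}^{-f}\,{\rm dVol}_g\big)\to S^n$.

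The only step that is not pure bookkeeping is the weighted volume expansion and the resulting equivalence between ${\rm Sc}^{{\rm vol}_n}(M^n)\geq\kappa$ and ${\rm Sc}_{3,3}\geq\kappa$; I do not expect a serious obstacle, as the expansion is an elementary normal-coordinate computation and the comparison argument copies the one already given for $C^2$ metrics. The one subtlety worth care is the point-dependent factor ${\rm e}^{-f(x)}$, i.e.\ how the $n$-dimensional condition and the ball comparisons are to be read for a weighted manifold; once that is pinned down, everything else follows directly from Proposition~\ref{Conformal}, Corollary~\ref{Genus}, and the weighted spherical Lipschitz bounded corollary.
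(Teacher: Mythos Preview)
Your approach is essentially identical to the paper's: quote the weighted small-ball expansion from \cite{MR1458581} to identify ${\rm Sc}^{{\rm vol}_n}>0$ (resp.\ $\geq\kappa$) with ${\rm Sc}_{3,3}>0$ (resp.\ $\geq\kappa$), then feed $(\alpha,\beta)=(3,3)$ into Proposition~\ref{Conformal}, Corollary~\ref{Genus}, and the weighted spherical Lipschitz bounded corollary. The paper's proof does exactly this, only more tersely; your explicit check of the numerical inequalities $\frac{(n-2)\cdot 9}{4(n-1)}<3$ and $\frac{9}{4}\leq 3$ is the same verification the paper leaves implicit.

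The subtlety you flag is real and worth noting: the paper writes the expansion as $\mu(B_r(x))={\rm vol}_E(B_r(\mathbf{R}^n))\big[1-\frac{{\rm Sc}_{3,3}}{6(n+2)}r^2+O(r^4)\big]$ \emph{without} the prefactor ${\rm e}^{-f(x)}$, and proceeds directly. Your version with the prefactor is what a normal-coordinate computation actually gives; the paper is tacitly using the $n$-dimensional condition (built into the definition of ${\rm Sc}^{{\rm vol}_n}$) to force ${\rm e}^{-f(x)}\equiv 1$ at every point, so that the leading-order comparison is governed by the $r^2$ coefficient. You are right that this is the only point requiring care, and once it is read that way your argument and the paper's coincide.
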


\begin{proof}The volume of the small disk of $\big(M^n, g, {\rm e}^{-f}\,{\rm dVol}_g\big)$ was computed in \cite[Theorem~8]{MR1458581},
\begin{align*}
\mu(B_r(x)) = {\rm vol}_E(B_r(\mathbf{R}^n))\left[1 - \frac{{\rm Sc}_g +3 \bigtriangleup_gf -3\| \bigtriangledown_gf\|^2_g }{6(n+2)} r^2 + O\big(r^4\big)\right]
 \end{align*}
as $r \rightarrow 0$. Since ${\rm Sc}^{{\rm vol}_n}(M^n) > 0$, i.e., $ \mu(B_r(x)) < {\rm vol}_E(B_r)$ as $r \rightarrow 0$, then
\begin{align*}
 {\rm Sc}_g +3 \bigtriangleup_gf -3\|\bigtriangledown_gf\|^2_g > 0.
\end{align*}
 Therefore, the propositions of vanishing harmonic spinors and of conformal PSC-metrics and Corollary~\ref{Genus} imply it.
\end{proof}

\begin{Remark}
Since any weighted Riemannian manifold (with non-trivial Borel measure) is infinitesimally Hilbertian (see \cite{MR4059811}), Corollary \ref{weighted volume} also works for $\big(M^n, g, {\rm e}^{-f}\,{\rm dVol}_g\big)$ with ${\rm RSC}(\kappa, n)$ condition.
\end{Remark}

Enlargeability as an obstruction to the existence of a PSC-metric on a closed manifold was introduced by Gromov--Lawson. We call a manifold enlargeable as Gromov--Lawson's definition in \cite[Definition~5.5]{MR720933}

\begin{Corollary}
Assume $M^n$ $(n\geq 3)$ is a closed spin smooth enlargeable manifold, then $\mathcal{R}^+_f(M^n)$ is an empty set for $\alpha \in \mathbb{R}$ and $\beta \geq \frac{(n-2)|\alpha|^2}{4(n-1)}$.

 In particular, $\big(\mathbb{T}^n,g, {\rm e}^{-f}\,{\rm dVol}_g\big)$ does not satisfy ${\rm Sc}^{{\rm vol}_n}(\mathbb{T}^n) >0$ for any $C^2$-smooth Riemannian metrics~$g$ and $C^2$-smooth functions~$f$ on the torus~$\mathbb{T}^n$.
\end{Corollary}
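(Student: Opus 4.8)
The plan is to reduce the statement to the classical Gromov--Lawson enlargeability obstruction via the conformal-change trick of Proposition~\ref{Conformal}. First I would observe that the hypothesis $\mathcal{R}^+_f(M^n)\neq\varnothing$ means there is a smooth Riemannian metric $g$ and a smooth function $f$ on $M^n$ such that $\big(M^n, g, {\rm e}^{-f}\,{\rm dVol}_g\big)$ has ${\rm Sc}_{\alpha,\beta}>0$. Since $n\geq 3$, $\alpha\in\mathbb{R}$ and $\beta\geq\frac{(n-2)|\alpha|^2}{4(n-1)}$, Proposition~\ref{Conformal} applies verbatim and produces a metric $\tilde g$ conformal to $g$ on $M^n$ with ${\rm Sc}_{\tilde g}>0$, i.e., an honest PSC-metric on the underlying smooth manifold $M^n$.

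The second step is to invoke the Gromov--Lawson theorem \cite{MR720933}: a closed spin enlargeable manifold admits no metric of positive scalar curvature. (Here the spin hypothesis is what lets us use the index-theoretic form of enlargeability; for the original Gromov--Lawson definition of enlargeability in \cite[Definition~5.5]{MR720933} the spin assumption is exactly the one under which the obstruction is known to hold in all these dimensions.) The existence of $\tilde g$ from the first step contradicts this, so no such pair $(g,f)$ can exist; hence $\mathcal{R}^+_f(M^n)=\varnothing$, which is the first assertion.

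For the ``in particular'' statement about the torus $\mathbb{T}^n$, I would first recall that $\mathbb{T}^n$ is a closed spin manifold and is enlargeable (it admits, for each $\epsilon>0$, an $\epsilon$-contracting map of nonzero degree onto $S^n$ after passing to suitable finite covers --- this is the standard example in \cite{MR720933}). Next I would note that by Corollary~\ref{weighted volume}, ${\rm Sc}^{{\rm vol}_n}\big(M^n\big)>0$ implies ${\rm Sc}_g+3\bigtriangleup_g f-3\|\bigtriangledown_g f\|_g^2>0$, that is, ${\rm Sc}_{3,3}>0$; and the pair $(\alpha,\beta)=(3,3)$ satisfies $\alpha\in\mathbb{R}$, $\beta=3\geq\frac{9(n-2)}{4(n-1)}$ for all $n\geq 3$ (since $\frac{9(n-2)}{4(n-1)}<\frac{9}{4}<3$). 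Therefore the first part of the corollary applies with $M^n=\mathbb{T}^n$, giving the claim: no $C^2$ metric $g$ and $C^2$ function $f$ make $\big(\mathbb{T}^n,g,{\rm e}^{-f}\,{\rm dVol}_g\big)$ satisfy ${\rm Sc}^{{\rm vol}_n}(\mathbb{T}^n)>0$.

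The only genuinely non-routine point is the first step --- checking that Proposition~\ref{Conformal}'s hypotheses on $\alpha$ and $\beta$ are exactly the ones in the corollary's statement, so that the conformal change is available with no loss --- and then being careful that ``enlargeable'' plus ``spin'' is precisely the setting in which \cite{MR720933} rules out PSC. Everything else is bookkeeping: the smoothness of $g$ and $f$ is assumed, compactness gives the needed infima in the conformal argument, and the dimensional inequality $\frac{9(n-2)}{4(n-1)}<3$ is immediate. So I expect the main (mild) obstacle to be simply stating the enlargeability input in a form consistent with \cite[Definition~5.5]{MR720933}, after which the proof is two lines.
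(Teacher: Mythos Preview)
Your proposal is correct and follows essentially the same route as the paper: apply Proposition~\ref{Conformal} to produce a PSC-metric, contradict the Gromov--Lawson enlargeability obstruction \cite[Theorem~5.8]{MR720933}, and for $\mathbb{T}^n$ invoke Corollary~\ref{weighted volume} with $(\alpha,\beta)=(3,3)$ to reduce to the first part. The only thing the paper adds is a one-line treatment of the $2$-torus via the Gauss--Bonnet argument from the beginning of Section~4, so if the ``in particular'' is meant to cover $n=2$ you should append that case.
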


\begin{proof}Since a closed enlargeable manifold cannot carry a PSC-metric \cite[Theorem~5.8]{MR720933}, Proposition \ref{Conformal} implies $\mathcal{R}^+_f(M^n)=\varnothing$ for $\alpha \in \mathbb{R}$ and $\beta \geq \frac{(n-2)|\alpha|^2}{4(n-1)}$.

 $\mathbb{T}^n$ is an important example of enlargeable manifolds and then Corollary \ref{weighted volume} implies that $\big(\mathbb{T}^n,g, {\rm e}^{-f}\,{\rm dVol}_g\big)$ does not satisfy ${\rm Sc}^{{\rm vol}_n}(\mathbb{T}^n) >0$ for $n\geq 3$. For dimension $2$, the conditions of ${\rm Sc}_{\alpha, \beta} > 0$ and $\beta\geq 0$ imply that the oriented surface is $2$-sphere.
\end{proof}

\subsection[f-minimal hypersurface and Sc(alpha,beta)>0]{$\boldsymbol{f}$-minimal hypersurface and $\boldsymbol{{\rm Sc}_{\alpha, \beta}>0}$}

In addition to using the Dirac operator method, Schoen--Yau's minimal hypersurface method~\cite{MR541332} is another main idea. For an immersed orientable hypersurface $N^{n-1}\subset M^n$, the weighted mean curvature vector~$H_f$ of~$N^{n-1}$ is defined by Gromov in \cite[Section~9.4.E]{MR1978494},
\begin{gather*}
H_f= H+ (\bigtriangledown_gf)^{\perp},
\end{gather*}
 where $H$ is the mean curvature vector field of the immersion, $(\cdot)^{\perp}$ is the projection on the normal bundle of $N^{n-1}$. The first and second variational formulae for the weighted volume functional of $N^{n-1}$ were derived in Bayle's thesis (also see~\cite{MR2342613}). We take the detailed presentation of such derivation for~\cite{MR3324919}. The $\big(N^{n-1}, \bar{g}\big)$ with the induced metric is called $f$-minimal hypersurface if the weighted mean curvature vector $H_f$ vanishes identically.

 In fact, the definition of $f$-minimal hypersurface can also be derived from the first variational formula. Furthermore, an $f$-minimal hypersurface is a minimal hypersurface of $(M^n, \tilde{g})$, where~$\tilde{g}$ is the conformal metric of $g$, $\tilde{g}={\rm e}^{-\frac{2f}{n-1}}g$.

The connection between the geometry of the ambient smooth mm-space and the $f$-minimal hypersurfaces occurs via the second variation of the weighted volume functional. For a hypersurface $\big(N^{n-1}, \bar{g}\big)$, the $L_f$ operator is defined by
\begin{gather*}
L_f:= \bigtriangleup_f + |A|^2 + {\rm Ricc}^M_f(\nu, \nu),
\end{gather*}
 where $\nu$ is the unit normal vector, $|A|^2$ denotes the square of the norm of the second fundamental form~$A$ of~$N^{n-1}$ and
\begin{gather*}
 \bigtriangleup_f:=\bigtriangleup_{\bar{g}} - \langle\bigtriangledown_{\bar{g}}f, \bigtriangledown_{\bar{g}}\cdot\rangle
\end{gather*}
is the weighted Laplacian. Through the second variational formula, a two-sided $f$-minimal hypersurface~$N^{n-1}$ is stable (called $L_f$-stable) if for any compactly supported smooth function $u\in C^{\infty}_c\big(N^{n-1}\big)$, it holds that
\begin{gather*}
 -\int_NuL_fu {\rm e}^{-f}\,{\rm dVol}_{\bar{g}}\geq 0.
\end{gather*}

\begin{Proposition}\label{hypersuface}
Let $\big(M^n, g, {\rm e}^{-f}\,{\rm dVol}_g\big)$ be a closed orientable smooth mm-space with \linebreak \mbox{${\rm Sc}_{\alpha, \beta} > 0$} and $\big(N^{n-1},\bar{g}\big)$ be the compact $L_f$-stable minimal hypersurface of $\big(M^n, g, {\rm e}^{-f}\,{\rm dVol}_g\big)$. If the dimension $n\geq 3$, $\alpha=2$, and $\beta \geq \frac{n-2}{n-1}$, then there exists a PSC-metric conformal to $\bar{g}$ on~$N^{n-1}$.
\end{Proposition}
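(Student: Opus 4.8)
The plan is to mimic the Schoen--Yau minimal hypersurface argument together with the conformal trick used in Proposition~\ref{Conformal}, but now on the hypersurface $N^{n-1}$ with the stability operator $L_f$ playing the role of the conformal Laplacian. First I would write out the $L_f$-stability inequality for a test function $u\in C^\infty(N^{n-1})$:
\begin{gather*}
\int_N \Big[\|\bigtriangledown_{\bar g} u\|^2_{\bar g} + \langle\bigtriangledown_{\bar g} f,\bigtriangledown_{\bar g} u\rangle\, u - \big(|A|^2 + {\rm Ricc}^M_f(\nu,\nu)\big)u^2\Big]{\rm e}^{-f}\,{\rm dVol}_{\bar g}\geq 0.
\end{gather*}
The key geometric input is the Gauss equation, which expresses the ambient scalar curvature as
\begin{gather*}
{\rm Sc}_g = {\rm Sc}_{\bar g} + 2{\rm Ricc}^M(\nu,\nu) + |A|^2 - H^2,
\end{gather*}
and since $N$ is minimal in the $f$-sense with $\alpha=2$ the weighted mean curvature vanishes, i.e., $H = -(\bigtriangledown_g f)^\perp$. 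Combining this with the Bochner-type identity relating ${\rm Ricc}^M_f(\nu,\nu)$, ${\rm Ricc}^M(\nu,\nu)$ and ${\rm Hess}(f)(\nu,\nu)$, and with the splitting $\bigtriangleup_g f = \bigtriangleup_{\bar g} f + {\rm Hess}(f)(\nu,\nu) + H(\nu) f$, one rewrites $|A|^2 + {\rm Ricc}^M_f(\nu,\nu)$ in terms of ${\rm Sc}_g + 2\bigtriangleup_g f - \beta\|\bigtriangledown_g f\|^2_g = {\rm Sc}_{2,\beta}$ on $M$, the intrinsic scalar curvature ${\rm Sc}_{\bar g}$ of $N$, plus manifestly nonnegative quadratic terms (the full $|A|^2$, or at least the gradient-squared terms that survive).

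Next I would feed a carefully chosen test function into the stability inequality — the standard choice is to take $u$ realizing the first eigenvalue of the Yamabe-type operator on $N$, i.e., one wants to show
\begin{gather*}
\int_N\Big[\|\bigtriangledown_{\bar g} u\|^2_{\bar g} + \tfrac{n-3}{4(n-2)}{\rm Sc}_{\bar g}\,u^2\Big]{\rm e}^{-f}\,{\rm dVol}_{\bar g} > 0
\end{gather*}
for all nontrivial $u$, which by the solution of the Yamabe problem on the weighted manifold (equivalently, on the conformally changed metric $\tilde{g}={\rm e}^{-2f/(n-1)}g$ restricted to $N$) produces a PSC-metric conformal to $\bar g$. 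The point is that after substituting the Gauss-equation expression for $|A|^2+{\rm Ricc}^M_f(\nu,\nu)$ into the stability inequality, the ambient term ${\rm Sc}_{2,\beta}>0$ enters with a positive sign, and the remaining cross terms $\langle\bigtriangledown_{\bar g} f,\bigtriangledown_{\bar g} u\rangle u$ and $\|\bigtriangledown_{\bar g} f\|^2 u^2$ are absorbed by a Cauchy--Schwarz/Young estimate exactly as in Proposition~\ref{Conformal}: one writes $\langle\bigtriangledown_{\bar g} f,\bigtriangledown_{\bar g} u\rangle u \leq \tfrac12(c^2\|\bigtriangledown_{\bar g} f\|^2 u^2 + c^{-2}\|\bigtriangledown_{\bar g} u\|^2)$ and checks that the coefficients of $\|\bigtriangledown_{\bar g} u\|^2$ and of $\|\bigtriangledown_{\bar g} f\|^2 u^2$ stay nonnegative, which is precisely where the numerical hypotheses $\alpha=2$ and $\beta\geq\frac{n-2}{n-1}$ get used (the dimensional factor $\frac{n-3}{4(n-2)}$ vs. $\frac{n-2}{4(n-1)}$ coming from comparing conformal Laplacians on $N^{n-1}$ and $M^n$). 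One should also handle the low-dimensional cases $n=3$ (where $N$ is a surface and the conclusion follows directly from the Gauss--Bonnet-style integration as in the surface discussion earlier in this section) separately.

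The main obstacle I anticipate is the bookkeeping in the Gauss-equation rewriting: keeping track of the normal-derivative terms ${\rm Hess}(f)(\nu,\nu)$ and $\|(\bigtriangledown_g f)^\perp\|^2$ versus the tangential ones, and verifying that what is left over after isolating ${\rm Sc}_{2,\beta}$ and ${\rm Sc}_{\bar g}$ is genuinely a sum of nonnegative terms (using $|A|^2\geq 0$ and discarding it, together with $\|\bigtriangledown_g f\|^2_g = \|\bigtriangledown_{\bar g} f\|^2_{\bar g} + \|(\bigtriangledown_g f)^\perp\|^2$). The choice $\alpha = 2$ is exactly the value that makes the $\bigtriangleup_g f$ term decompose compatibly with the weighted Laplacian $\bigtriangleup_f$ appearing in $L_f$, so the calculation should close; the constant $\beta\geq\frac{n-2}{n-1}$ should emerge as the threshold making the Young-inequality constant $c$ choosable. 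I expect the argument to parallel \cite[Proposition~2]{MR3625165} and the Schoen--Yau descent, with the density/weight modifications being the only new feature.
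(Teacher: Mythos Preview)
Your approach is viable but takes a genuinely different route from the paper's. The paper does not redo the Schoen--Yau computation with weights; instead it observes two facts and reduces immediately to the classical case. First, it quotes from \cite[Appendix]{MR3324919} that $N$ is $L_f$-stable in $\big(M^n,g,{\rm e}^{-f}\,{\rm dVol}_g\big)$ if and only if $N$ is an ordinary stable minimal hypersurface in the conformally rescaled ambient $\big(M^n,\tilde g\big)$ with $\tilde g={\rm e}^{-2f/(n-1)}g$. Second, the standard conformal-change formula gives ${\rm Sc}_{\tilde g}={\rm e}^{f/(n-1)}\big({\rm Sc}_g+2\bigtriangleup_g f-\tfrac{n-2}{n-1}\|\bigtriangledown_g f\|^2_g\big)$, so the hypotheses $\alpha=2$, $\beta\geq\tfrac{n-2}{n-1}$, ${\rm Sc}_{2,\beta}>0$ force ${\rm Sc}_{\tilde g}>0$. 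At that point one invokes the classical Schoen--Yau argument verbatim on $(M^n,\tilde g)$ to get a PSC-metric conformal to the induced metric $\bar{\tilde g}$, which is conformal to $\bar g$. The numerical constraints appear not through a Young-inequality balancing act but simply as the coefficients in the conformal scalar-curvature formula.

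Your direct computation---expanding $L_f$-stability, inserting the Gauss equation, and absorbing cross terms by Cauchy--Schwarz---is the strategy of Fan~\cite{MR2407091} and Abedin--Corvino~\cite{MR3625165} for the $P$-scalar case and should close; it has the advantage of being self-contained (no appeal to the stability-equivalence lemma) and of making the role of $\alpha=2$ explicit as the compatibility condition you describe. The cost is the bookkeeping you anticipate, plus one point you glossed over: the $L_f$-stability inequality is stated with the weighted volume ${\rm e}^{-f}\,{\rm dVol}_{\bar g}$, whereas the Yamabe criterion you need on $(N,\bar g)$ is with respect to ${\rm dVol}_{\bar g}$, so you must either substitute $u\mapsto {\rm e}^{cf}u$ to pass between them or recognise (as you parenthetically do) that the weighted Yamabe problem is equivalent to the unweighted one on $(N,\bar{\tilde g})$---which is essentially the paper's shortcut reappearing. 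The paper's route is shorter precisely because it packages all of this into the single conformal change on the ambient space.
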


\begin{proof}The $f$-minimal hypersurface $\big(N^{n-1}, \bar{g}\big)$ is $L_f$-stable if and only if $\big(N^{n-1}, \bar{\tilde{g}}\big)$ is stable as a minimal hypersurface on $(M^n, \tilde{g})$, where $\tilde{g}:={\rm e}^{-\frac{2f}{n-1}}g$ and $ \bar{\tilde{g}}$ is the induced metric of $\tilde{g}$ (see \cite[Appendix]{MR3324919}). On the other hand, the scalar curvature of $(M^n, \tilde{g})$ is
\begin{gather*}
{\rm Sc}_{\tilde{g}}= {\rm e}^{\frac{f}{n-1}}\left({\rm Sc}_g +2\bigtriangleup_gf-\frac{n-2}{n-1}\|\bigtriangledown_gf\|^2_g\right).
\end{gather*}
Thus, ${\rm Sc}_{\alpha, \beta} > 0$ with $n\geq 3$, $\alpha=2$, and $\beta \geq \frac{n-2}{n-1}$ imply ${\rm Sc}_{\tilde{g}}>0$. Then the standard Schoen--Yau's argument can be applied to show that $\bar{\tilde{g}}$ conformal to a PSC-metric on $N^{n-1}$.
\end{proof}

\begin{Remark}The minimal hypersurface method poses a stricter condition to the valid range of $\alpha, \beta$ than that of the Dirac operator method.
\end{Remark}

Since the oriented closed manifolds with a PSC-metric in 2 and 3 dimensions are classified by Gauess--Bonnet theorem and Perelman--Thurston geometrization theorem, then Proposition~\ref{hypersuface} can give the following elementary applications:

\begin{Corollary}\label{minisurface}
Let $\big(M^n, g, {\rm e}^{-f}\,{\rm dVol}_g\big)$ be a closed orientable smooth mm-space with ${\rm Sc}_{\alpha, \beta} > 0$.
\begin{enumerate}\itemsep=0pt
\item[$1.$] If $n=3$, $\alpha=2$, and $\beta \geq \frac{1}{2}$, then there is no closed immersed $L_f$-stable minimal $2$-di\-men\-sional surface with positive genus.
\item[$2.$] If $n=4$, $\alpha=2$, and $\beta \geq \frac{2}{3}$, then the closed immersed $L_f$-stable minimal $3$-dimensional submanifold must be spherical $3$-manifolds, $S^2\times S^1$ or the connected sum of spherical $3$-manifolds and copies of~$S^2\times S^1$.
\end{enumerate}
\end{Corollary}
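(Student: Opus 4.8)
The plan is to deduce both statements directly from Proposition~\ref{hypersuface}, combined with the classification of closed manifolds carrying a positive scalar curvature metric in dimensions $2$ and $3$.

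First I would treat the case $n=3$: take $\alpha=2$, $\beta\geq\frac12$, and let $\big(N^2,\bar g\big)$ be a closed immersed $L_f$-stable minimal surface in $\big(M^3,g,{\rm e}^{-f}\,{\rm dVol}_g\big)$. Noting that $\frac12=\frac{n-2}{n-1}$ for $n=3$, Proposition~\ref{hypersuface} supplies a metric conformal to $\bar g$ with positive scalar curvature on $N^2$; since the scalar curvature of a surface is twice its Gauss curvature, the Gauss--Bonnet theorem then gives $\chi\big(N^2\big)>0$. Because the immersed hypersurface $N^2$ is orientable (it is two-sided in the orientable manifold $M^3$), this forces $N^2\cong S^2$, so $N^2$ has genus $0$; hence no $L_f$-stable minimal surface of positive genus exists.

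Next I would treat the case $n=4$: take $\alpha=2$, $\beta\geq\frac23=\frac{n-2}{n-1}$, and let $\big(N^3,\bar g\big)$ be a closed immersed $L_f$-stable minimal $3$-manifold in $\big(M^4,g,{\rm e}^{-f}\,{\rm dVol}_g\big)$. Proposition~\ref{hypersuface} again yields a PSC-metric conformal to $\bar g$ on $N^3$, and $N^3$ is closed and orientable. Invoking the classification of closed orientable $3$-manifolds admitting a metric of positive scalar curvature --- built from the surgery and building-block results of Gromov--Lawson and Schoen--Yau together with the Perelman--Thurston geometrization theorem, which excludes aspherical summands --- one concludes that $N^3$ is a spherical space form, $S^2\times S^1$, or a connected sum of such pieces, which is exactly the stated conclusion.

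Since both halves are immediate once Proposition~\ref{hypersuface} is in hand, I do not expect a genuine obstacle; the only points needing care are bookkeeping ones: verifying that the hypotheses $\beta\geq\frac12$ (for $n=3$) and $\beta\geq\frac23$ (for $n=4$) are precisely the instances $\beta\geq\frac{n-2}{n-1}$ required in Proposition~\ref{hypersuface}, and invoking the orientability (two-sidedness) of $N^{n-1}$ so that the dimension-$2$ case excludes $\mathbb{RP}^2$ and the dimension-$3$ case may quote the orientable geometrization classification.
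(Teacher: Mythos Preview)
Your proposal is correct and follows exactly the route the paper intends: the corollary is stated as an immediate application of Proposition~\ref{hypersuface} together with the Gauss--Bonnet theorem in dimension~$2$ and the Perelman--Thurston geometrization classification in dimension~$3$, and you have filled in those details precisely. One small point of care: orientability of $N^{n-1}$ is not automatic from the orientability of $M^n$ alone, but the paper's definition of $L_f$-stability is given only for two-sided hypersurfaces, so two-sidedness is built into the hypothesis and your conclusion follows.
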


\begin{Remark}[historical remark]The prototype of Corollary~\ref{minisurface}(1) is the Schoen--Yau's classic result, which said that the oriented closed 3-manifold with a PSC-metric has no compact immersed stable minimal surface of positive genus~\cite{MR541332}. The Schoen--Yau result had been generalized to Perelman's P-scalar curvature${}>0$ by Fan~\cite{MR2407091}. Note that one can also consider the noncompact immersed $L_f$-stable minimal 2-dimensional surface under the condition of Corollary~\ref{minisurface}(1) since an oriented complete stable minimal surface in a complete oriented 3-manifold with a PSC-metric is conformally equivalent to the complex plane~$\mathbb{C}$ showed by Fischer--Colbrie--Schoen~\cite{MR562550}.
\end{Remark}

The smooth mm-space with ${\rm Sc}_{\alpha, \beta}>0$ under suit ranges of $\alpha$ and $\beta$ implies the manifold admits PSC-metrics, but the manifold (itself) that can admit PSC-metrics does not necessarily imply there exists ${\rm Sc}_{\alpha, \beta}>0$.
\begin{Question}Does the smooth mm-space with ${\rm Sc}_{\alpha, \beta}>0$ under suitable ranges of $\alpha$ and $\beta$ give more topological restriction on the manifold than the PSC-metric on the manifold?
\end{Question}

\begin{Question}Let $M$ be a closed smooth manifold, $f$ be a smooth function on $M$ and $h$ be a smooth function that is negative for some point $p$ on $M$. What is the range of $\alpha$ and $\beta$ such that there exists a smooth Riemannian metric $g$ on $M$ satisfying
 \begin{gather*}
 {\rm Sc}_g + \alpha\bigtriangleup_gf - \beta\|\bigtriangledown_gf\|_g^2=h,
 \end{gather*}
 i.e., ${\rm Sc}_{\alpha, \beta}(g)=h$?
\end{Question}

Let $(M, g_i)$ be smooth Riemannian manifolds and $\{g_i\}_{i\in \mathbf{N}}$ $C^0$-converges to $g$, then $\{g_i\}_{i\in \mathbf{N}}$ also smGH-converges to $g$. Gromov showed that the scalar curvature$\geq \kappa$ is stable under $C^0$-convergence in \cite[Section~1.8]{MR3201312}.

\begin{Question}Assume smooth mm-spaces $\big(M^n,g_i, {\rm e}^{-f}\,{\rm dVol}_{g_i}\big)$ all satisfy ${\rm Sc}^{{\rm vol}_n}(M^n)\geq 0$ such that $\{g_i\}_{i\in \mathbf{N}}$ $C^2$-converges to the smooth Riemannian metric~$g$ on~$M^n$, then does $\big(M^n,g, \allowbreak{\rm e}^{-f}\,{\rm dVol}_{g}\big)$ also satisfy ${\rm Sc}^{{\rm vol}_n}(M^n)\geq 0$?
\end{Question}

\begin{Question}Let mm-spaces $\big(M^n,g, {\rm e}^{-f}\,{\rm dVol}_{g}\big)$ with ${\rm Sc}^{{\rm vol}_n}(M^n)\geq \kappa>0$, where~$M^n$ is a~closed smooth manifold, $g$ and~$f$ are $C^0$-smooth, then does there exist a PSC-metric on~$M^n$?
\end{Question}

Since the role of ${\rm Sc}_{\alpha, \beta}>0$ on the smooth mm-space is similar to the role of ${\rm Sc}>0$ on the manifold, one can try to extend the knowledge about ${\rm Sc}>0$ to ${\rm Sc}_{\alpha, \beta}>0$.

\subsection{Weighted rigidity}
Gromov's conjecture that said if a smooth Riemannian metric $g$ satisfies $g\geq g_{\rm st}$ and ${\rm Sc}(g)\geq {\rm Sc}(g_{\rm st})=n(n-1)$ on $S^n$ then $g=g_{\rm st}$, was proved by Llarull~\cite{MR1600027} and called Llarull rigidity theorem. A map $h\colon (M^n, g_M)\to (N^n, g_N)$ is said to be $\epsilon$-contracting if $\|h_*v\|_{g_N}\leq \epsilon\|v\|_{g_N}$ for all tangent vectors $v$ on $M^n$

\begin{Proposition}[weighted rigidity] Assume the smooth mm-space $\big(M^n,g, {\rm e}^{-f}\,{\rm dVol}_{g}\big)$ is closed and spin and there exists a smooth $1$-contracting map $h\colon (M^n, g)\to (S^n, g_{\rm st})$ of non-zero degree. If $\alpha \in \mathbb{R}$, $\beta \geq \frac{|\alpha|^2}{4}$ and ${\rm Sc}_{\alpha, \beta}\geq n(n-1)$, then~$h$ is an isometry between the metrics~$g$ and~$g_{\rm st}$. Furthermore, if $\alpha> 0$, then~$f$ is a constant function.
\end{Proposition}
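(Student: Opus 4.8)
The plan is to run Llarull's Dirac-operator argument on the twisted spinor bundle $S \otimes h^* TS^n$ (or rather its complexified, possibly $\mathbb{Z}_2$-graded version), but to absorb the weight function $f$ into the curvature estimate exactly as in the proof of Proposition~\ref{vanishing}. First I would recall Llarull's setup: on a closed spin $(M^n,g)$ together with a $1$-contracting map $h$ of nonzero degree, one twists the spinor bundle by the pulled-back bundle $E = h^*TS^n$ equipped with the pulled-back connection, obtaining a Dirac operator $\mathbb{D}_E$ whose index is a nonzero multiple of $\deg(h)$ (this uses that the Euler class of $TS^n$ is nontrivial); hence there is a nontrivial harmonic twisted spinor $\psi$, $\mathbb{D}_E\psi = 0$. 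The Lichnerowicz--Weitzenböck formula for the twisted operator reads $\mathbb{D}_E^2 = \nabla^*\nabla + \tfrac14 {\rm Sc}_g + \mathcal{R}^E$, where $\mathcal{R}^E$ is the curvature term of the twisting bundle. Llarull's pointwise estimate, using $1$-contractivity and ${\rm Sc}(g_{\rm st}) = n(n-1)$, gives $\mathcal{R}^E \geq -\tfrac14 n(n-1)$ pointwise, with equality at a point only if $h$ is an isometry there on that tangent space.

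Next I would insert the weight. Integrating the Weitzenböck identity against $\|\psi\|^2$ and rewriting ${\rm Sc}_g = {\rm Sc}_{\alpha,\beta} - \alpha\triangle_g f + \beta\|\nabla_g f\|_g^2$, one obtains, after integration by parts on the $\alpha\triangle_g f$ term exactly as in Proposition~\ref{vanishing},
\begin{align*}
0 \;=\; \int_M \Big[ \|\nabla_g\psi\|_g^2 + \tfrac14\big({\rm Sc}_{\alpha,\beta} + \beta\|\nabla_g f\|_g^2\big)\|\psi\|_g^2 + \tfrac{\alpha}{4}\langle \nabla_g f, \nabla_g\|\psi\|_g^2\rangle_g + \big\langle \mathcal{R}^E\psi,\psi\big\rangle_g \Big]\,{\rm dVol}_g.
\end{align*}
Applying the Cauchy--Schwarz / Young trick $\tfrac{|\alpha|}{4}|\langle\nabla_g f,\nabla_g\|\psi\|_g^2\rangle_g| \leq \tfrac{|\alpha|}{4}\big(c^2\|\nabla_g f\|_g^2\|\psi\|_g^2 + c^{-2}\|\nabla_g\psi\|_g^2\big)$ and choosing $c=1$ (permissible precisely because $\beta \geq \tfrac{|\alpha|^2}{4}$, which makes $1 - \tfrac{|\alpha|}{4}\geq 0$ unnecessary — actually one needs $c^{-2}|\alpha|\leq 4$ and $\beta\geq c^2|\alpha|$, both met at a suitable $c$ when $\beta\geq |\alpha|^2/4$, as in Proposition~\ref{vanishing}) yields
\begin{align*}
0 \;\geq\; \int_M \Big[ \big(\text{nonneg.}\big)\|\nabla_g\psi\|_g^2 + \big(\text{nonneg.}\big)\|\nabla_g f\|_g^2\|\psi\|_g^2 + \tfrac14{\rm Sc}_{\alpha,\beta}\|\psi\|_g^2 + \big\langle\mathcal{R}^E\psi,\psi\big\rangle_g \Big]\,{\rm dVol}_g.
\end{align*}
Now Llarull's bound $\langle\mathcal{R}^E\psi,\psi\rangle_g \geq -\tfrac14 n(n-1)\|\psi\|_g^2$ combined with ${\rm Sc}_{\alpha,\beta}\geq n(n-1)$ forces every term of the integrand to vanish identically: $\psi$ is parallel (so $\|\psi\|_g$ is a nonzero constant, hence $\psi\neq 0$ everywhere), $\|\psi\|_g^2\|\nabla_g f\|_g^2 \equiv 0$ forcing $\nabla_g f\equiv 0$, i.e., $f$ is constant when $\beta > |\alpha|^2/4$ strictly or when the coefficient of $\|\nabla_g f\|^2$ is strictly positive; and ${\rm Sc}_{\alpha,\beta}\|\psi\|_g^2 + 4\langle\mathcal{R}^E\psi,\psi\rangle_g = 0$ forces the equality case in Llarull's pointwise estimate at every point, which means $h$ is a metric isometry on each tangent space, hence (being of nonzero degree and $1$-contracting) a Riemannian isometry $g\cong g_{\rm st}$. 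For the last sentence, when $\alpha > 0$ one checks the coefficient $\tfrac{\beta - c^2|\alpha|}{4}$ (or its analogue) can be arranged strictly positive — or more simply, once $h$ is an isometry $g$ has constant scalar curvature $n(n-1)$, so ${\rm Sc}_{\alpha,\beta} = n(n-1) + \alpha\triangle_g f - \beta\|\nabla_g f\|_g^2 \geq n(n-1)$ gives $\int_M(\alpha\triangle_g f - \beta\|\nabla_g f\|_g^2)\,{\rm dVol}_g \geq 0$, and integrating by parts the left side equals $-\beta\int_M\|\nabla_g f\|_g^2\,{\rm dVol}_g \leq 0$ (with $\beta>0$ since $\beta\geq|\alpha|^2/4>0$), so $\nabla_g f\equiv 0$.

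The main obstacle I anticipate is not the weight bookkeeping — that is a direct transcription of Proposition~\ref{vanishing} — but rather importing Llarull's pointwise curvature estimate and its rigidity (equality) statement cleanly into the twisted Weitzenböck framework, together with verifying that the nonvanishing of the twisted index is unaffected by the argument (it is purely topological, depending on $\deg h \neq 0$ and $e(TS^n)\neq 0$, so the weight plays no role there). A secondary subtlety is choosing the auxiliary constant $c$ in the Young inequality so that simultaneously the $\|\nabla_g\psi\|^2$ coefficient is nonnegative and the $\|\nabla_g f\|^2\|\psi\|^2$ coefficient is positive, which is exactly where the hypothesis $\beta\geq\tfrac{|\alpha|^2}{4}$ is used and where the strict positivity needed for "$f$ constant when $\alpha>0$" must be extracted; as noted above, the cleanest route to that last clause is the separate scalar-curvature integration argument rather than tightening constants in the spinorial estimate.
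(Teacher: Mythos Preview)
Your proposal is correct and follows essentially the same route as the paper: run Llarull's twisted Dirac argument, rewrite $\mathrm{Sc}_g$ in terms of $\mathrm{Sc}_{\alpha,\beta}$, integrate by parts and apply the Young-inequality trick of Proposition~\ref{vanishing} under $\beta\geq|\alpha|^2/4$, then use the nonvanishing twisted index together with Llarull's sharp curvature bound $\langle R^E\psi,\psi\rangle\geq -\tfrac14\sum_{i\neq j}\lambda_i^{-1}\lambda_j^{-1}\|\psi\|^2$ to force $\lambda_k=1$ and hence $h$ an isometry. Your closing integration argument for ``$f$ constant when $\alpha>0$'' is exactly the paper's (the paper phrases it as $\triangle_g f\geq 0$ on a closed manifold forcing $\triangle_g f\equiv 0$, hence $\nabla_g f\equiv 0$, which is the same computation).
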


\begin{proof}One just need to insert the tricks in the proof of Proposition~\ref{vanishing} to the proof in \cite[Theorem~4.1]{MR1600027}. Following the setup of Llarull, we only prove the even-dimensional~($2n$) case without loss of generality.

First, we will show that $h$ is an isometry. Fix $p\in M^{2n}$. Let $\{e_1,\dots, e_{2n}\}$ be a $g$-orthonormal tangent fame near $p$ such that $(\bigtriangledown_g e_k)_p=0$ for each~$k$. Let $\{\epsilon_1,\dots,\epsilon_{2n}\}$ be a $g_{\rm st}$-orthonormal tangent frame near $h(p)\in S^{2n}$ such that $(\bigtriangledown_{g_{\rm st}}\epsilon_k)_{h(p)}=0$ for each $k$. Moreover, the bases $\{e_1,\dots, e_{2n}\}$ and $\{\epsilon_1,\dots,\epsilon_{2n}\}$ can be chosen so that $\epsilon_j=\lambda_jh_*e_j$ for appropriate $\{\lambda_j\}_{j=1}^{2n}$. This is possible since $h_*$ is symmetric. Since $h$ is $1$-contracting map, $\lambda_k\geq 1$ for each $k$.

 Then one constructs the twisted vector bundles $S \bigotimes E$ over $M^{2n}$ as Llarull did. Let $R^E$ be the curvature tensor of $E$ and $\psi$ be a twisted spinor, then one gets
 \begin{gather*}
 \big\langle R^E\psi, \psi\big\rangle_g \geq -\frac{1}{4}\sum_{i\neq j}\frac{1}{\lambda_i\lambda_j}\|\psi\|_g.
\end{gather*}

For the twisted Dirac operator $\mathbb{D}_E$, one has $\mathbb{D}^2_E=\bigtriangledown^*\bigtriangledown + \frac{1}{4}{\rm Sc}_g + R^E$ and
\begin{gather*}
\int_M \big\langle \mathbb{D}^2_E \psi, \psi \big\rangle_g \,{\rm dVol}_g = \int_M \bigg[\|\bigtriangledown_g\psi\|^2_g + \frac{1}{4}\big({\rm Sc}_{\alpha, \beta} - \alpha \bigtriangleup_g f + \beta\|\bigtriangledown_gf\|^2_g\big)\|\psi\|^2_g \\
\hphantom{\int_M \big\langle \mathbb{D}^2_E \psi, \psi \big\rangle_g \,{\rm dVol}_g =}{} + \big\langle R^E\psi, \psi\big\rangle_g\bigg] \,{\rm dVol}_g\\
\hphantom{\int_M \big\langle \mathbb{D}^2_E \psi, \psi \big\rangle_g \,{\rm dVol}_g}{}
 = \int_M \bigg[\|\bigtriangledown_g\psi\|^2_g + \left(\frac{1}{4}{\rm Sc}_{\alpha, \beta} + \frac{\beta}{4}\|\bigtriangledown_g f\|^2_g\right)\|\psi\|^2_g \\
\hphantom{\int_M \big\langle \mathbb{D}^2_E \psi, \psi \big\rangle_g \,{\rm dVol}_g =}{}
 + \frac{\alpha}{4} \langle \bigtriangledown_gf, \bigtriangledown_g\|\psi\|^2_g\rangle_g + \big\langle R^E\psi, \psi\big\rangle_g\bigg] {\rm dVol}_g .
\end{gather*}

Because $\lambda_k\geq 1$ for each $k$, one gets
\begin{gather*}
 \big\langle R^E\psi, \psi\big\rangle_g \geq \frac{-2n(2n-1)}{4}\|\psi\|_g
 \end{gather*}
 and then
\begin{align*}
\frac{|\alpha|}{4}|\big\langle\bigtriangledown_gf, \bigtriangledown_g \|\psi\|^2_g \big\rangle_g| &\leq \frac{|\alpha|}{4}\big(\|\bigtriangledown_gf\|_g \|\psi\|_g \times 2 \|\bigtriangledown_g \psi\|_g\big)\\
& = \frac{|\alpha|}{2}\big( c_1 \|\bigtriangledown_gf\|_g\|\psi\|_g \times c_1^{-1} \|\bigtriangledown_g\psi\|_g\big) \\
& \leq \frac{|\alpha|}{4} \big(c_1^2 \|\bigtriangledown_gf\|^2_g \|\psi\|^2_g + c_1^{-2} \|\bigtriangledown_g\psi\|^2_g\big),
 \end{align*}
where $c_1 \neq 0$. Therefore,
\begin{gather*}
\int_M \big\langle \mathbb{D}^2_E \psi, \psi \big\rangle_g \,{\rm dVol}_g \geq
 \int_M \bigg[\left(1- \frac{c_1^{-2} |\alpha|}{4}\right)\|\bigtriangledown_g\psi\|^2_g + \frac{\beta - c_1^{2} |\alpha|}{4} \|\bigtriangledown_gf\|^2_g\|\psi\|^2_g \\
\hphantom{\int_M \langle \mathbb{D}^2_E \psi, \psi \rangle_g \,{\rm dVol}_g \geq}{}
 +\frac{1}{4} ({\rm Sc}_{\alpha, \beta}-2n(2n-1)) \|\psi\|^2_g \bigg] {\rm dVol}_g.
\end{gather*}

Furthermore, since $\alpha \in \mathbb{R}$, $\beta \geq \frac{|\alpha|^2}{4}$ and ${\rm Sc}_{\alpha, \beta}\geq 2n(2n-1)$, one can choose $c_1$ such that $c_1^{-2} |\alpha|\leq 4$, then $\beta - c_1^{2} |\alpha|\geq 0$. Thus,
\begin{equation*}
\int_M \big\langle \mathbb{D}^2_E \psi, \psi \big\rangle_g \,{\rm dVol}_g\geq \int_M \frac{1}{4} [{\rm Sc}_{\alpha, \beta}-2n(2n-1)]\|\psi\|^2_g \,{\rm dVol}_g\geq 0.
\end{equation*}

The fact ${\rm Index}(\mathbb{D}_{E^+})\neq 0$ implies ${\rm ker}(\mathbb{D}_E)\neq 0$ and then ${\rm Sc}_{\alpha, \beta}=2n(2n-1)$. Then using the inequality $\big\langle R^E\psi, \psi\big\rangle_g \geq -\frac{1}{4}\sum_{i\neq j}\frac{1}{\lambda_i\lambda_j}\|\psi\|_g$, one gets
\begin{equation*}
\int_M \big\langle \mathbb{D}^2_E \psi, \psi \big\rangle_g \,{\rm dVol}_g\geq \int_M \frac{1}{4} \bigg[\sum_{i\neq j}\left(1-\frac{1}{\lambda_i\lambda_j}\right)\bigg]\|\psi\|^2_g \,{\rm dVol}_g\geq 0.
\end{equation*}

Choosing $\psi\neq 0$ such that ${\mathbb D}_E\psi=0$, one has
\begin{align*}
0\leq 1-\frac{1}{\lambda_i\lambda_j}\leq 0
\end{align*}
for $i\neq j$. Thus, $\lambda_k=0$ for all $1\leq k\leq 2n$ and $h$ is an isometry.

Second, we will show that $f$ is a constant function. Since ${\rm Sc}_{\alpha, \beta}=2n(2n-1)$, ${\rm Sc}_g=2n(2n-1)$, $\alpha> 0$ and $\beta \geq \frac{|\alpha|^2}{4}$,
then $\bigtriangleup_g f\geq 0$. One has
\begin{align*}
 \int_M \bigtriangleup_g f \,{\rm dVol}_g=0
\end{align*}
for a closed manifold~$M^n$, so one gets $\bigtriangleup_g f=0$. That implies $\bigtriangledown_gf=0$ so that $f$ is a constant function on~$M^n$.
\end{proof}

\begin{Corollary}Let the closed and spin smooth mm-space $\big(M^n,g, {\rm e}^{-f}\,{\rm dVol}_{g}\big)$ with ${\rm Sc}^{{\rm vol}_n}(M^n)\allowbreak \geq n(n-1)$ and there exists a smooth $1$-contracting map $h\colon (M^n, g)\to (S^n, g_{\rm st})$ of non-zero degree, then~$h$ is an isometry between the metrics $g$ and $g_{\rm st}$.
\end{Corollary}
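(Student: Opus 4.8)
The plan is to read this corollary off directly from the weighted rigidity Proposition, the only preparatory step being to rephrase the $n$-volumic hypothesis in terms of ${\rm Sc}_{\alpha, \beta}$ at the parameters $\alpha=\beta=3$.

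First I would translate the hypothesis. As recorded in Section~\ref{smooth mm-space} and reproved in the proof of Corollary~\ref{weighted volume} via the small-ball volume expansion of \cite[Theorem~8]{MR1458581} (together with the comparison against $r$-balls in $S^2(\gamma)\times\mathbf{R}^{n-2}$, whose scalar curvature is $2\gamma^{-2}$, and the compactness of $M^n$ plus continuity of ${\rm Sc}_{3,3}$), the condition ${\rm Sc}^{{\rm vol}_n}(M^n)\geq\kappa$ is equivalent to ${\rm Sc}_{3,3}:={\rm Sc}_g+3\bigtriangleup_gf-3\|\bigtriangledown_gf\|^2_g\geq\kappa$ for any constant $\kappa\geq 0$. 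Taking $\kappa=n(n-1)$ turns the assumption ${\rm Sc}^{{\rm vol}_n}(M^n)\geq n(n-1)$ into ${\rm Sc}_{3,3}\geq n(n-1)$.

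Second I would verify that $(\alpha,\beta)=(3,3)$ is admissible for the weighted rigidity Proposition: indeed $\alpha=3\in\mathbf{R}$ and $\beta=3\geq\frac{9}{4}=\frac{|\alpha|^2}{4}$. Combined with the standing hypotheses that $M^n$ is closed and spin, that there is a smooth $1$-contracting map $h\colon(M^n,g)\to(S^n,g_{\rm st})$ of non-zero degree, and the bound ${\rm Sc}_{3,3}\geq n(n-1)$ just obtained, every hypothesis of that Proposition holds with this choice of $\alpha,\beta$. It therefore yields immediately that $h$ is an isometry between $g$ and $g_{\rm st}$; and since moreover $\alpha=3>0$, the same Proposition forces $f$ to be a constant function.

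The only step carrying any content is the equivalence ${\rm Sc}^{{\rm vol}_n}\geq\kappa \iff {\rm Sc}_{3,3}\geq\kappa$, and within it the passage from the pointwise $r\to 0$ asymptotics to one small radius that works uniformly over $M^n$; this is the same compactness-and-continuity argument used repeatedly earlier in the paper, so I anticipate no genuine obstacle, and the corollary then follows by a one-line appeal to the weighted rigidity Proposition.
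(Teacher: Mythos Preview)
Your proposal is correct and follows exactly the paper's own route: the paper's proof is the one-line ``Combining the weighted rigidity theorem and the proof of Corollary~\ref{weighted volume} can imply it,'' which is precisely your translation ${\rm Sc}^{{\rm vol}_n}\geq n(n-1)\Rightarrow {\rm Sc}_{3,3}\geq n(n-1)$ followed by an application of the weighted rigidity Proposition at $(\alpha,\beta)=(3,3)$, where $\beta=3\geq 9/4=|\alpha|^2/4$. Your observation that $\alpha=3>0$ additionally forces $f$ to be constant is a free bonus not stated in the corollary but indeed delivered by the Proposition.
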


\begin{proof}Combining the weighted rigidity theorem and the proof of Corollary~\ref{weighted volume} can imply it.
\end{proof}

As Llarull rigidity theorem (and the weighted rigidity theorem) still holds if the condition that $h$ is $1$-contracting is replaced by the condition that $h$ is area-contracting, Gromov called such metrics area-extremal metrics, asked which manifolds possess area-extremal metrics, and conjectured that Riemannian symmetric spaces should have area-extremal metrics~\cite{MR1389019}, \cite[Section~17]{Gromov} and \cite[Section~4.2]{2019arXiv190810612G}. Goette--Semmelmann showed that several classes of symmetric spaces with non-constant curvatures are area-extremal \cite{MR1877585}.

\begin{Question}Can Goette--Semmelmann's results~{\rm \cite{MR1877585}} be generalized to the smooth mm-space with ${\rm Sc}_{\alpha, \beta}>0$ under other suitable conditions?
\end{Question}

Since Corollary~\ref{minisurface}(1) showed that the closed orientable immersed $L_f$-stable minimal $2$-dimensional surface in the closed orientable smooth mm-space $\big(M^n,g, {\rm e}^{-f}\,{\rm dVol}_{g}\big)$ with ${\rm Sc}_{2, \beta}>0$ $\big(\beta\geq \frac{1}{2}\big)$ is $2$-sphere, then one can consider rigidity of area-minimizing 2-sphere
in $3$-dimensional smooth mm-space. Bray’s volume comparison theorem \cite[Chapter~3, Theorem~18]{MR2696584} is another rigidity theorem that needs the conditions of Ricci curvature and scalar curvature bounded below. There are other rigidity phenomena involving scalar curvature, see~\cite{MR3076061}.

\begin{Question}Can Bray’s volume comparison theorem be extended to the smooth mm-space?
\end{Question}

\begin{Question}What is the correct Einstein field equation on the smooth mm-space?

If one replaces the Ricci and scalar curvature on the left hand side of Einstein field question by ${\rm Ricc}^M_f$ and ${\rm Sc}_{\alpha, \beta}$ for the smooth mm-space $\big(M^n,g, {\rm e}^{-f}\,{\rm dVol}_{g}\big)$, then what is the stress–energy tensor on the right hand side in this case?
\end{Question}

\subsection*{Acknowledgements}
 I am grateful to Thomas Schick for his help, the referees for their useful comments, and the funding from China Scholarship Council.

\pdfbookmark[1]{References}{ref}
\LastPageEnding

\end{document}